\documentclass[11pt]{article}
\usepackage[utf8]{inputenc}
\usepackage{amsfonts, amsmath}
\usepackage{amssymb}
\usepackage{amsthm,amsmath,amscd}
\usepackage{enumerate}
\usepackage{url}
\usepackage{amsbsy}
\usepackage[pdftex]{graphicx}
\usepackage[margin=25pt,font=small,labelfont=bf]{caption}
\usepackage[breaklinks,hyperfootnotes=false,bookmarks=false,colorlinks=true]{hyperref}
\usepackage{subfig}
\usepackage{mdframed}
\usepackage{color}
\usepackage[square,numbers,sort&compress]{natbib}
\setcounter{secnumdepth}{2}
\setcounter{tocdepth}{2}


\newcommand{\defn}[1]{\textcolor{blue}{\emph{#1}}}

\graphicspath{{figs/}}

\def\qed{\hfill {\hfill $\Box$} \medskip}
\DeclareMathOperator{\good}{Good}
\DeclareMathOperator{\bad}{Bad}
\DeclareMathOperator{\sing}{Sing}
\DeclareMathOperator{\Fano}{Fano}

\newcommand{\RR}{\mathbb R}

\newcommand{\EE}{\mathbb E}
\newcommand{\R}{\mathbb R}

\newcommand{\bna}{\begin{eqnarray}}
\newcommand{\ena}{\end{eqnarray}}
\newcommand{\ba}{\begin{eqnarray*}}
\newcommand{\ea}{\end{eqnarray*}}
\newcommand{\bs}[1]{}
\newcommand{\ud}{\,\mathrm{d}}
\newcommand{\edgecard}{N}
\newcommand{\incidencegraph}{\Delta}
\newcommand{\graphautomorphism}{\rho}
\newcommand{\reflectiongroup}{W}

\DeclareMathOperator{\Aut}{Aut}

\newtheorem{theorem}{Theorem}[section]

\newtheorem{lemma}[theorem]{Lemma}
\newtheorem{proposition}[theorem]{Proposition}
\newtheorem{remark}[theorem]{Remark}
\newtheorem{definition}[theorem]{Definition}

\DeclareMathOperator{\real}{Real}

\DeclareMathOperator{\Dim}{Dim}
\DeclareMathOperator{\Gr}{Gr}
\DeclareMathOperator{\rank}{rank}

\newcommand{\CC}{{\mathbb C}}
\newcommand{\CS}{{\mathcal S}}
\newcommand{\QQ}{{\mathbb Q}}
\newcommand{\ZZ}{{\mathbb Z}}

\newcommand{\bl}{{\bf l}}

\newcommand{\LL}{{L_{2,4}}}

\newcommand{\LN}{{L_{d,n}}}
\def\p{{\bf p}}

\def\pn{\p =(\p_1, \dots, \p_{n}) }
\def\q{{\bf q}}
\def\f{{\bf f}}
\def\g{{\bf g}}
\def\h{{\bf h}}
\def\G{{\bf G}}
\def\D{{\bf D}}
\def\bm{{\bf m}}

\def\e{{\bf e}}

\def\E{{\bf E}}

\def\x{{\bf x}}
\def\y{{\bf y}}
\def\z{{\bf z}}

\def\I{{\bf I}}
\def\PP{{\mathbb{P} }}

\def\A{{\bf A}}
\def\B{{\bf B}}

\def\S{{\bf S}}
\def\T{{\bf T}}
\def\J{{\bf J}}
\def\X{{\bf X}}
\def\Y{{\bf Y}}
\def\Z{{\bf Z}}

\def\P{{\bf P}}

\newcommand{\genericpoint}{\bl}

\newcommand{\trans}[1]{{#1}^\top}

\title{
Linear Symmetries of the Unsquared
Measurement Variety}

\author{
Ioannis Gkioulekas, 
Steven J. Gortler,
Louis Theran,
and 
Todd Zickler}
\date{}

\usepackage[margin=1in]{geometry}
\textheight     9in

\textwidth       6.5in

\begin{document}
\maketitle 

\begin{abstract}
We introduce a new family of algebraic 
varieties, $L_{d,n}$, which we call the
unsquared measurement varieties. 
This family is parameterized
by a number of points $n$ and a dimension $d$. 
These varieties arise naturally from problems in 
rigidity theory and distance geometry. In those applications, it can 
be useful to understand the group of linear automorphisms of $L_{d,n}$.
Notably, a result of Regge implies that $L_{2,4}$ has an unexpected linear automorphism.
In this paper,
we give a complete characterization of the linear
automorphisms of $L_{d.n}$ for all $n$ and $d$.  
We show, that apart from  $L_{2,4}$ the unsquared measurement
varieties have no unexpected automorphisms. Moreover, for $L_{2,4}$ we characterize
the full automorphism group.
\end{abstract}

\section{Introduction} 
\label{sec:intro}

Many questions in graph rigidity and distance geometry can be answered
by studying an object called the squared measurement variety.
Given a configuration $\p$ of $n \ge d+2$ ordered points in $\RR^d$, we can 
measure the $N:= \binom{n}{2}$ ordered squared Euclidean distances between each pair
of the points and consider this as a single point in $\RR^N$.
(We associate each point in $\p$ with a vertex of the complete graph, $K_n$, and each of the $N$ vertex pairs
with an edge of $K_n$. Under this association, we refer to each of the $N$ measurements as
the squared distance of an edge.)
If we take the union of the measurement points over all possible
configurations of $n$ points in $\RR^d$, we obtain a subset
of $\RR^N$ which we call the 
\defn{Euclidean squared measurement set} (of the complete graph $K_n$)
denoted as 
$M^{\EE}_{d,n}$.  
Under complexification and the Zariski-closure
of this measurement set, we obtain a variety which we call the 
\defn{squared measurement variety} (of the complete graph) denoted as 
$M_{d,n} \subset \CC^{\edgecard}$. (Formal definitions are below.) This has also been called the
Cayley-Menger variety.
This variety 
is linearly isomorphic to 
$\CS^{n-1}_d$,
the variety of complex, symmetric 
$(n-1)\times(n-1)$
matrices of rank $d$ or less, which is a well understood
variety.

In~\cite{BK1}, 
Boutin and Kemper showed that one can 
uniquely reconstruct (up to congruence)
a generic configuration $\p$ in $\RR^d$ 
given its   $N$ \emph{unlabeled} pairwise squared distances.
By unlabeled, we mean that we are not told which 
distance measurement
corresponds to 
which pair of points. This central
result has many applications in rigidity and distance geometry~\cite{dux1,echo}.
The key to their result is showing that there are no 
permutations of the coordinate axes of $\CC^N$ 
(called edge permutations)
that map $M_{d,n}$ to itself, except for the permutations that
are consistent with a permutation of the indices of the $n$ points.
Such  permutations are said to be \defn{induced by a vertex relabeling}.

One can, of course, 
expand the question and ask about 
all the non-singular linear maps 
on $\CC^N$ that map $M_{d,n}$ to 
itself.  We call these \defn{linear 
automorphisms} of $M_{d,n}$.
Due to the linear relationship between 
$M_{d,n}$ and $\CS^{n-1}_d$, classifying the 
linear automorphisms of $M_{d,n}$ boils down
to looking at the linear automorphisms
of $\CS^{n-1}_d$. 
This is a classical question, and it is well known
that the 
linear automorphisms of $\CS^{n-1}_d$ are 
all linear maps with a ``factored'' form $\trans{\B}\G\B$, where $\G\in \CS^{n-1}_d$ 
and $\B$ is any $(n-1)\times (n-1)$ non-singular matrix (see, e.g., \cite{cip2}).

An even more general and daunting distance geometry problem
is to reconstruct an $n$ point configuration in $\RR^d$ given 
an unlabeled set of  Euclidean 
lengths of $N$ \emph{paths} through the configuration~\cite{loops0,velt}. 
By path, we mean
an ordered sequence of vertices, and we define its length
to be the sum of the Euclidean 
lengths of each edge
along the path.  Importantly, a path-length is defined
as
a sum of Euclidean edge lengths, not a sum of squared Euclidean edge lengths.
As such, the relevant algebraic variety 
to study should
represent
lengths instead of squared lengths.  
Also, in the unlabeled setting,
we are not even  given the information as to which combinatorial
paths were measured in the first place. Thus we are not just
concerned with coordinate permutations of $\CC^n$ but
with more general linear maps acting on this variety
(such as those arising from sums of lengths).

To this end, we define 
the \defn{squaring map} $s(\cdot)$ be the map from $\CC^{\edgecard}$ 
onto $\CC^{\edgecard}$ that
acts by squaring each of the $\edgecard$ coordinates of a point.
We then define
the \defn{unsquared measurement variety} of $n$ points
in $d$ dimensions,
$L_{d,n}$, as the preimage of $M_{d,n}$ under the squaring map.
(Each point in $M_{d,n}$ has $2^{\edgecard}$ preimages in $L_{d,n}$, arising
through coordinate negations).
We prove below 
(Theorem~\ref{thm:Lvariety}) that for $d \ge 2$, the variety $L_{d,n}$ is irreducible (for $d=1$ it is
actually a reducible arrangement of linear subspaces).

In this paper, 
we wish to understand the set of linear automorphisms
of $L_{d,n}$, ie. the non-singular 
linear maps on $\CC^N$ that map $L_{d,n}$ to itself.
Any coordinate permutation that is 
induced by a vertex relabeling
must be an automorphism.
Also due to the squaring construction, any
coordinate negation will be an automorphism. 
We call the group of automorphisms generated 
by vertex relabelings and coordinate the 
\defn{signed vertex relabelings}.

By homogeneity,  any uniform scale on $\CC^N$
will also  be an automorphism. 
Let us call the group of automorphisms  generated by signed 
vertex relabelings and uniforms scalings
the \defn{expected} automorphisms of $L_{d,n}$.

We then ask: are there any ``unexpected'' linear automorphisms of $L_{d,n}$?
Recall that $M_{d,n}$ has many linear automorphisms that are not 
permutations of any type.  By analogy, there is no a priori restriction on
what the linear automorphisms of $L_{d,n}$ can be.

In, fact, $L_{2,4}$ \emph{does} have an 
unexpected linear automorphism.
Regge~\cite{regge} (see also, Roberts \cite{regge2}) 
showed that the following linear map always takes the
Euclidean lengths $l$ of the edges of any 4-point 
configuration in $\RR^2$ to those, $l'$, of some different 4-point
configuration in $\RR^2$.
\begin{equation}\label{eq:reggemap}\tag{$\star$}
\begin{aligned}
l'_{13} &=& l_{13} \\
l'_{24} &=& l_{24} \\
l'_{12} &=& (-l_{12}+ l_{23}+ l_{34}+ l_{14})/2 \\
l'_{23} &=& (l_{12}-l_{23}+ l_{34}+ l_{14})/2 \\
l'_{34} &=& (l_{12}+ l_{23}- l_{34}+ l_{14})/2 \\
l'_{14} &=& (l_{12}+ l_{23}+ l_{34}- l_{14})/2
\end{aligned}
\end{equation}
This ``Regge symmtry'' gives rise to an 
unexpected linear automorphism of $L_{2,4}$.  
So the plot has thickened.

The first main result of this paper is that $L_{2,4}$ is the only unsquared measurement variety with an unexpected
linear automorphism. 
\begin{theorem}\label{thm:main}
Let $d \ge 1$ and let $n \ge d+2$.  
Assume that $\{d,n\}\neq\{2,4\}$.
Then any linear automorphism 
$\A$ of $L_{d,n}$ of is a scalar multiple
of a signed vertex relabeling.
\end{theorem}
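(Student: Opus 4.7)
The plan is to reduce any linear automorphism $\A$ of $L_{d,n}$ to a monomial matrix (a product of a permutation $P$ and a nonsingular diagonal $D$), and then to identify $P$ as a vertex relabeling and $D$ as a scaled sign change.

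\textbf{Step 1: reducing to monomial form.} The central step is to show that $\A$ preserves the collection of coordinate hyperplanes $\{l_e = 0\}$, $e \in E(K_n)$, as a set; equivalently, that $\A$ normalizes the coordinate sign-change group $G = (\ZZ/2)^N \subset \mathrm{GL}(\CC^N)$. For each edge $e$ let $H_e := L_{d,n} \cap \{l_e = 0\}$ be the corresponding hyperplane section, whose points parameterize (complex) configurations for which the endpoints $p_i, p_j$ of $e$ satisfy $\iprod{p_i - p_j}{p_i - p_j} = 0$. The strategy is to identify the $H_e$'s (or the union $\bigcup_e H_e$, or the group $G$ itself realized as the deck group of $s : L_{d,n} \to M_{d,n}$) as canonical invariants of $L_{d,n}$. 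Concrete routes include Jacobian analysis of the singular stratification of $L_{d,n}$ (most effective when $M_{d,n}$ is not a hypersurface, i.e.\ $n \geq d+3$, so that $\sing(L_{d,n})$ has a rich structure coming from the multiple Cayley--Menger minors), and finer combinatorial invariants of hyperplane sections such as irreducible-component counts or the local multiplicity of $L_{d,n}$ along $\{l_e = 0\}$ in the hypersurface case $n = d+2$. Once this step succeeds, $\A$ maps coordinate hyperplanes to coordinate hyperplanes and so factors as $\A = P D$.

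\textbf{Step 2: identifying the monomial as a signed vertex relabeling.} Given $\A = P D$ preserving $L_{d,n}$, the coordinate-wise squaring identity $s \circ \A = (P D^2) \circ s$ shows that $P D^2$ is a monomial linear automorphism of $M_{d,n}$. Combining the classical classification of linear automorphisms of $M_{d,n}$ (via the identification with $\CS^{n-1}_d$ and the factored form $\trans{\B} \G \B$) with the Boutin--Kemper theorem on edge permutations, one checks that $P$ must be induced by a vertex relabeling of $K_n$ and $D^2$ must be a scalar multiple of the identity. Hence $D = \mu S$ for some $\mu \in \CC^\ast$ and diagonal sign matrix $S$, so $\A = \mu S P$ is a scalar multiple of a signed vertex relabeling, as required.

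\textbf{Main obstacle.} Step 1 is the heart of the argument, and is precisely where the exclusion of $\{d,n\} = \{2,4\}$ must be used. Regge's map shows that for $L_{2,4}$ the coordinate hyperplanes are genuinely mixed by a non-monomial linear automorphism, so any proof must use properties specific to $(d,n) \neq (2,4)$. I expect the hardest subcase to be the hypersurface regime $n = d+2$ with $d \geq 3$, where $L_{d,d+2}$ is cut out by a single polynomial and its singular stratification is less informative than in the non-hypersurface regime, forcing a more bespoke argument that distinguishes the higher-dimensional Cayley--Menger hypersurfaces from their $d = 2$ counterpart.
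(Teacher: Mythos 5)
Your Step 2 matches the paper's argument essentially exactly: the paper's Lemma~\ref{lem:L-to-M} is exactly the observation that $\A = \D\P$ monomial and preserving $L_{d,n}$ forces $\D^2\P$ to be a generalized permutation preserving $M_{d,n}$, and Theorem~\ref{thm:bk-linear} (a mild strengthening of Boutin--Kemper using the simplicial volume determinant on triangles, Lemma~\ref{lem:voldet-scale}) then forces $\P$ to be a vertex relabeling and $\D^2$ scalar. You have this part right.

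Step 1 is where the genuine gap is, and although you correctly identify it as the crux and correctly locate where the $(d,n)=(2,4)$ exclusion must enter, none of the concrete routes you propose is the paper's actual mechanism for the main case $n\geq d+3$. The paper does \emph{not} prove monomiality by analyzing the singular stratification of $L_{d,n}$ in that range. Instead, its central technical engine is Theorem~\ref{thm:linImage}: if $\E$ is a $D\times N$ matrix of rank $r$ and the image $\E(L_{d,n})$ has dimension strictly less than $r$, then $r=D$ and $\E$ is supported on the $D$ edges of a single $K_{d+2}$ subgraph. The proof goes through a purely linear-algebraic Proposition~\ref{prop:inf} about the tangent spaces at a smooth point $\bl$ and all of its coordinate sign-flips, using the negation symmetry of $L_{d,n}$ together with a determinant-and-sign-flips identity (Lemma~\ref{lem:nonsing}) and Proposition~\ref{prop:simplex} (that $K_{d+2}$ is the unique infinitesimally dependent graph on $\leq D$ edges). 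From Theorem~\ref{thm:linImage} one deduces that the column-support map $\tau_\A$ permutes $K_{d+2}$ subgraphs (Lemma~\ref{lem:tet-a-tet}), and a counting argument specific to $n\geq d+3$ --- $\binom{n-2}{d} > \binom{n-3}{d-1}$ --- then forces each row of $\A$ to have a single nonzero entry (Lemma~\ref{lem:gen-perm}). This is a tangent-space/projection argument, not a singular-locus argument, and it is the idea you are missing.

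Your sense of which regime is hardest is also inverted relative to the paper. For $n=d+2$ with $d\geq 3$ the paper does use the singular locus, but only to reduce to the already-solved case: it shows $L_{d-1,d+2}$ is the unique component of $\sing(L_{d,d+2})$ with a full-dimensional affine span (Lemmas~\ref{lem:bigComp}, \ref{lem:singBigSpan}, \ref{lem:singSmallSpan}), so any linear automorphism must preserve it (Lemma~\ref{lem:singAuto}), and then Theorem~\ref{thm:no-regges} applies since $L_{d-1,d+2}$ is in the $n\geq d'+3$ regime. So the hypersurface case piggybacks on the $n\geq d+3$ machinery rather than needing a bespoke harder argument. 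The obstruction to running this reduction at $d=2$ is exactly that $L_{1,4}$ is reducible (so $L_{1,4}$ is not an irreducible component of $\sing(L_{2,4})$), which is a cleaner explanation for the $(2,4)$ exception than the informal ``hypersurface is less informative'' heuristic you offer. Finally, for a complete proof you also need the small case $L_{1,3}$ (Theorem~\ref{thm:l13-aut}), which your proposal does not address; this is easy (four hyperplanes, $\PP\Aut$ embeds in $S_4$), but it is a case your Step~1 strategies would not obviously reach, since $L_{1,3}$ has no smooth irreducibility to lean on.
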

This theorem is proven by combining the three cases proven below in Theorems
\ref{thm:no-regges}, \ref{thm:no-regges3} and \ref{thm:l13-aut}.

The second main result of this paper is 
to fully characterize the group of linear automorphisms of
$L_{2,4}$.
The details for this statement require a few definitions.

\begin{definition}\label{def:L24-groups}
Define $\Aut(L_{2,4})$ to be the linear automorphisms of $L_{2,4}$.  Let the
group $\PP \Aut(L_{2,4})$ be induced on the equivalence classes
of $\A\in \Aut(L_{2,4})$ under the relation ``$\A'$ is a complex scale of $\A$''.

We also consider the real subgroup $\Aut_{\RR}(L_{2,4})$.  This has a 
counterpart $\PP \Aut_{\RR}(L_{2,4})$ of equivalence classes up to 
real scale, and  $\PP_{+} \Aut_{\RR}(L_{2,4})$, on equivalence classes defined 
up to \textit{positive} scale.  It is well-defined to refer to an element 
of $\PP_{+} \Aut_{\RR}(L_{2,4})$ as being non-negative, since any equivalence
class containing a non-negative $\A$ consists entirely of non-negative matrices.
\end{definition}

\begin{theorem}
\label{thm:main2}
The group $\PP \Aut(L_{2,4})$
is of order $11520=768\cdot 15$.
It is 
generated by linear automorphisms that are represented by
matrices with rational elements.

The group $\PP_{+} \Aut_{\RR}(L_{2,4})$ is of order $23040$
and is isomorphic to the Weyl group $D_6$.  The subset of 
non-negative elements of $\PP_{+} \Aut_{\RR}(L_{2,4})$ is a 
subgroup of order $24$ and acts by relabeling the vertices of $K_4$.
\end{theorem}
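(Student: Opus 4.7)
The plan is to produce a rational change of coordinates that exposes a $D_6$-symmetry of $L_{2,4}$, and then use it to establish matching upper and lower bounds on the automorphism group. By Theorem~\ref{thm:Lvariety}, $L_{2,4}$ is an irreducible hypersurface; explicitly, it is cut out by $p(\bl) := P(l_{12}^2, \ldots, l_{34}^2)$, where $P$ is the $5\times 5$ Cayley--Menger determinant, a polynomial of degree three in the squared lengths. Thus $p$ has total degree six, and $\PP \Aut(L_{2,4})$ embeds into the projective stabilizer of $[p]$ in $PGL_6(\CC)$. The theorem reduces to computing this stabilizer exactly.

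The main constructive step is to exhibit $\Q \in GL_6(\QQ)$ such that $q(\w) := p(\Q \w)$ is manifestly invariant under the standard Weyl-group $D_6$-action on $\CC^6$ by coordinate permutations and even numbers of sign changes. A natural candidate for $\Q$ is built from the three perfect matchings of $K_4$: each matching decomposes the six edges into three pairs, and applying a Hadamard-like $2\times 2$ rotation within each pair should convert the Regge involution~\eqref{eq:reggemap} into a signed coordinate-transposition in the new basis. Once $\Q$ is in hand, transporting the $D_6$-action back to the original coordinates gives an explicit subgroup of $\Aut_\RR(L_{2,4})$ of order $23040$ with rational-entry generators. Quotienting by positive scale gives $|\PP_{+} \Aut_\RR(L_{2,4})| \geq 23040$; quotienting by all complex scalars (noting that $-I \in D_6$) gives $|\PP \Aut(L_{2,4})| \geq 11520$, and the rationality claim for the generators is automatic.

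The hard part is the matching upper bound $|\PP \Aut(L_{2,4})| \leq 11520$: one must show that no unexpected complex automorphisms arise beyond the $D_6$-image just constructed. My preferred approach is to analyze the projective stabilizer of $[q]$ directly, exploiting that $q$ is an explicit element of the finite-dimensional space of $D_6$-invariants of degree six, and showing that its coefficients are not stabilized by any larger subgroup of $GL_6(\CC)$. Concretely, one could (i) compute the Lie-algebra stabilizer of $q$ in $\mathfrak{gl}_6(\CC)$ and show it vanishes, establishing that the full stabilizer is finite, and then (ii) pin down the discrete part by its action on a geometrically distinguished finite subset of $\{q = 0\}$ such as its singular locus or a discrete orbit of low size. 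This final verification is the principal source of difficulty.

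The isomorphism $\PP_{+} \Aut_\RR(L_{2,4}) \cong D_6$ is then immediate. For the non-negative subgroup: vertex relabelings correspond to $\{0,1\}$-permutation matrices and form an $S_4$-subgroup of order $24$ of non-negative classes. Every other element of $\PP_{+} \Aut_\RR(L_{2,4})$---coordinate negations, the three Regge involutions, and any $D_6$-element involving sign changes---has a representative with a negative entry, and since non-negativity is well-defined on equivalence classes modulo positive scale (as noted in Definition~\ref{def:L24-groups}), no products with vertex relabelings can produce a non-negative matrix outside the $S_4$-subgroup.
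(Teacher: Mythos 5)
Your proposal takes a genuinely different route from the paper, but it has several gaps of varying severity.

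The paper's proof works entirely through the singular locus of $L_{2,4}$: it computes that $\sing(L_{2,4})$ is a union of sixty $3$-dimensional linear subspaces, forms the bipartite incidence graph between these $3$-flats and their $1$-dimensional pairwise and triple intersections, computes (with Nauty) that this graph has $11520$ automorphisms, and then verifies that each graph automorphism lifts to a unique projective linear automorphism of $L_{2,4}$. Both the upper and lower bounds on $|\PP\Aut(L_{2,4})|$ come out of this single computation. The $D_6$ identification is done separately by building a root system out of the $30$ rational $1$-dimensional intersection subspaces of types I and II. Your approach instead separates the two bounds: a constructive lower bound via a coordinate change exposing a standard $D_6$ action, and a stabilizer-theoretic upper bound. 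That separation is a legitimate alternative strategy, and the idea of diagonalizing the Regge involution via a Hadamard-type block rotation keyed to the three perfect matchings of $K_4$ is a genuinely nice way to make the $D_6$ structure visible without the root-system machinery. But the current write-up does not close the argument.

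First, the coordinate change $\Q$ is only a ``natural candidate''; you do not verify that $q := p \circ \Q$ is actually $D_6$-invariant, and without that the lower bound does not follow. Second, and more seriously, your upper bound is only a sketch, and your step (ii) -- pinning down the finite stabilizer by its action on ``a geometrically distinguished finite subset such as its singular locus'' -- is precisely the computation the paper carries out. If you flesh out (ii), you have re-derived the paper's approach; if you do not, the upper bound $|\PP\Aut(L_{2,4})| \le 11520$ is unproved, and the theorem's order claims are not established. The Lie-algebra finiteness step (i) is a reasonable idea, but the paper does not even need it, since the singular-locus incidence graph is automatically a discrete invariant.

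Third, the non-negativity argument is logically incomplete. You assert that ``every other element of $\PP_{+}\Aut_{\RR}(L_{2,4})$ \ldots\ has a representative with a negative entry \ldots\ so no products with vertex relabelings can produce a non-negative matrix outside the $S_4$-subgroup.'' But having negative entries is not preserved under products (two sign-flip matrices multiply to the identity), so the conclusion does not follow from the premise. You would need to argue separately that the $23040 - 24$ non-$S_4$ elements each individually fail non-negativity, which is exactly the brute-force check the paper performs. In short, your strategy is different in spirit and could in principle give a cleaner conceptual argument for the $D_6$ lower bound, but as written both the upper bound and the non-negativity classification are unsupported.
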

(This is proven as Theorem~\ref{thm:auto2} below.)

We will also see that 
The group $\PP_{+} \Aut_{\RR}(L_{2,4})$ is 
in fact generated by 
the edge permutations induced by vertex relabeling, 
sign flip matrices, and the one Regge symmetry of \eqref{eq:reggemap}.

Our proof of this second theorem is computer aided.

\begin{remark}\label{rem:dpt-mo-question}
That $\PP_{+} \Aut_{\RR}(L_{2,4})$ contains a subgroup 
isomorphic to $D_6$ is based on conversations with
Dylan Thruston (see \cite{T17}) and has antecedents 
in~\cite{doyle}.
See \cite{I19,W17} for other 
geometric connections.
\end{remark}

The central step for the proof of Theorem~\ref{thm:main} is understanding which linear projection maps acting on $L_{d,n}$
can have deficient dimensions. This is done in Theorem~\ref{thm:linImage} below. That result can also be of independent
interest in unlabeled rigidity problems~\cite{loops0}. Additionally, in Appendix~\ref{sec:fano}, we study the large linear subspaces
contained in $L_{2,4}$, which can also be of independent use in unlabeled rigidity~\cite{loops0}.

\subsection*{Acknowledgements}
We would like to thank Dylan Thurston
for numerous helpful
conversations and suggestions throughout this project. His input on Regge symmetries, and on the use of 
covering space maps was essential.
We also thank Brian Osserman for fielding numerous algebraic geometry
queries.

Steven Gortler was partially supported by NSF grant DMS-1564473. Ioannis Gkioulekas and Todd Zickler received support from the DARPA REVEAL program under contract no.~HR0011-16-C-0028.




\section{Measurement Varieties}\label{sec:varieties}

We start by establishing our basic terminology.
We relegate our needed definitions and theorems from
algebraic geometry to Appendix~\ref{sec:geometry}.

\begin{definition}\label{def:constants}
Fix positive integers  $d$  and $n$.
Throughout the paper, we will set $N:= \binom{n}{2}$, 
$C:=\binom{d+1}{2}$, and $D:= \binom{d+2}{2}$.

These constants appear often because they are, respectively, the 
number of pairwise distances between $n$ points, the dimension of the 
group of congruences in $\RR^d$, 
and the number of edges in a complete $K_{d+2}$ graph.
\end{definition}

\begin{definition}
\label{def:graph}
A \defn{configuration}, $\pn$ is a  sequence of $n$ points
in $\RR^d$. (If we want to talk about points in $\CC^d$, we will
explicitly call this a \defn{complex configuration}.) The affine span
of a configuration need not be all of $\RR^d$.

We think of the integers in $[1,\dots,n]$ as the 
\defn{vertices}
of an abstract complete graph $K_n$. An \defn{edge}, $\{i,j\}$, is an
unordered distinct pair of vertices. The complete edge set of $K_n$ has cardinality $N$.

Fixing a configuration $\p$ in $\R^d$, we define the 
\defn{length} of an edge
$\{i,j\}$ to be the Euclidean distance between the points
$\p_i$ and $\p_j$, a real number. 

\end{definition}

Next we will study the basic properties
of two related families of varieties, the squared and unsquared
measurement varieties. 

The squared variety is very well studied
in the literature, but the unsquared variety is much less so.
Since we are interested in integer sums of unsquared 
edge lengths, we wish  to understand the structure of this
unsquared variety.

\begin{definition}
\label{def:sms}
Let us index the coordinates of $\CC^N$ as $ij$, with
$i < j$ and both between $1$ and $n$.  We also fix an ordering 
on the $ij$ pairs to index the coordinates of $\CC^N$ as $i$ with 
$i$ between $1$ and $N$.\footnote{This ordering choice does not matter as long 
as we are consistent.  It is there to lets us switch between coordinates 
indexed by edges of $K_n$ and indexed using flat vector notation.  For $n=4$, $N=6$ we will 
use the order: $12,13,23,14,24,34$.}
\end{definition}

Let us begin with a 
\defn{complex configuration} $\p$ of $n$ points in $\CC^d$
with $d \geq 1$. We will always assume  $n \geq d+2$.
There are $\edgecard$ vertex pairs (edges), along which we can measure
the complex \emph{squared} length as 
\ba
m_{ij}(\p) := \sum_{k=1}^{d}(\p^k_i-\p^k_j)^2
\ea
where $k$ indexes over the $d$ dimension-coordinates. Here, we measure
complex squared length using the complex square operation with no
conjugation. We consider the vector $[m_{ij}(\p)]$ over all of the vertex 
pairs, with $i<j$,
as a single point in $\CC^{\edgecard}$, which we denote as $m(\p)$.

\begin{definition}
Let $M_{d,n}\subset \CC^{\edgecard}$ be the
the image of $m(\cdot)$ over all $n$-point complex configurations in $\CC^d$. 
We call this the \defn{squared measurement variety} of $n$ points
in $d$ dimensions.
\end{definition}
When $n \le (d+1)$, then $M_{d,n}= \CC^{\edgecard}$.

\begin{definition}
If we restrict the domain to be real configurations, then 
we call the image under $m(\cdot)$ the \defn{Euclidean squared measurement set} denoted as 
$M^{\EE}_{d,n} \subset \RR^{\edgecard}$.
This set has real dimension 
$dn-C$. 
\end{definition}

The following theorem reviews some basic facts.
Most of the  ideas are discussed 
in~\cite{ciprian}, but we include a detailed proof here for completeness and ease of reference.

\begin{theorem}
\label{thm:Mvariety}
Let $n \ge d+2$.
The set $M_{d,n}$
is linearly isomorphic to 
$\CS^{n-1}_d$,
the variety of complex, symmetric 
$(n-1)\times(n-1)$
matrices of rank $d$ or less.
Thus, $M_{d,n}$ is a variety.
It is irreducible.
Its dimension is $dn-C$.
Its singular set $\sing(M_{d,n})$ consists of squared measurements of configurations
with affine spans of dimension strictly less than $d$.
\end{theorem}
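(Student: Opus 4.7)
The plan is to exhibit an explicit linear isomorphism between $\CC^N$ and the space $\CS^{n-1}$ of complex symmetric $(n-1)\times(n-1)$ matrices that carries $M_{d,n}$ onto $\CS^{n-1}_d$. Every assertion in the theorem will then follow by transporting known facts about $\CS^{n-1}_d$.

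First I set up the isomorphism using the standard Gram-matrix trick. Translation in $\CC^d$ preserves all squared edge measurements, so I may assume $\p_n = 0$ and collect the remaining points as columns of $P \in \CC^{d\times(n-1)}$. Then $G := \trans{P}P \in \CS^{n-1}$ has rank at most $d$, and expanding $m_{ij}(\p) = \sum_k(\p_i^k - \p_j^k)^2$ gives
\[
m_{in}(\p) = G_{ii}, \qquad m_{ij}(\p) = G_{ii} + G_{jj} - 2 G_{ij} \quad (i,j<n).
\]
These formulas are linearly invertible in the other direction via $G_{ii} = m_{in}$ and $2 G_{ij} = m_{in} + m_{jn} - m_{ij}$, so they define a linear isomorphism $\Phi\colon \CC^N \to \CS^{n-1}$; the dimensions match since $\binom{n}{2} = (n-1) + \binom{n-1}{2}$.

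Next I identify the image of $M_{d,n}$ under $\Phi$ with $\CS^{n-1}_d$. The inclusion $\Phi(M_{d,n})\subseteq\CS^{n-1}_d$ is immediate from the construction. For the reverse inclusion, any $G\in\CS^{n-1}_d$ admits a factorization $G = \trans{P}P$ with $P\in\CC^{d\times(n-1)}$; this is classical over $\CC$ (diagonalize the quadratic form and take a complex square root of the eigenvalues). Reading off the columns of $P$ and appending a zero column produces a complex configuration $\p$ with $\Phi(m(\p)) = G$. Hence $\Phi$ restricts to a linear bijection $M_{d,n}\cong\CS^{n-1}_d$, which in particular makes $M_{d,n}$ a variety.

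Finally, I invoke standard results on $\CS^{n-1}_d$. The polynomial map $\mu\colon\CC^{d\times(n-1)} \to \CS^{n-1}_d$, $P\mapsto\trans{P}P$, is surjective by the previous paragraph, so $\CS^{n-1}_d$ is irreducible; a generic fiber is a coset of the action of the $\binom{d}{2}$-dimensional complex orthogonal group, giving $\dim \CS^{n-1}_d = d(n-1) - \binom{d}{2} = dn - C$. It is also classical that $\sing(\CS^{n-1}_d) = \CS^{n-1}_{d-1}$ whenever $d < n-1$, and our standing assumption $n\ge d+2$ places us in that range. Pulling everything back under $\Phi^{-1}$, the singular points of $M_{d,n}$ correspond to configurations whose Gram matrix has rank strictly less than $d$, equivalently whose affine span has dimension less than $d$. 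The only step that requires genuine care is the existence of the factorization $G = \trans{P}P$ for every symmetric rank-$\le d$ matrix, which is exactly the point where working over $\CC$ (rather than $\RR$) is essential.
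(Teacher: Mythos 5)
Your argument is essentially the same as the paper's: translate so $\p_n$ is the origin, pass to the Gram matrix $G=\trans{P}P$, and transport known facts about the rank-constrained symmetric matrix variety $\CS^{n-1}_d$ back through the explicit linear isomorphism $m_{ij}=G_{ii}+G_{jj}-2G_{ij}$ (with $G$ entries involving index $n$ read as zero). The only cosmetic differences are that you write out the inverse linear map $G_{ii}=m_{in}$, $2G_{ij}=m_{in}+m_{jn}-m_{ij}$ explicitly where the paper cites a lemma for invertibility, and you justify the factorization $G=\trans{P}P$ via diagonalization of quadratic forms over $\CC$ where the paper invokes Takagi factorization; both are valid, and the rest of the transported facts (irreducibility from a surjective polynomial parametrization, the dimension count $d(n-1)-\binom{d}{2}=dn-C$, and the singular locus $\CS^{n-1}_{d-1}$ corresponding to configurations of deficient affine span) match the paper's proof.
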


\begin{proof}
Such an isomorphism is developed in~\cite{YH38}
and further, for example, in~\cite{gower}, see also
\cite[Section 7]{cgr}. The basic idea is as follows. We can, w.l.o.g.,
translate the entire complex configuration $\p$ 
in $\CC^d$ such that the last
point $\p_n$ is at the origin. We can then think of this as a
configuration of $n-1$ vectors in $\CC^d$.  Any such complex
configuration gives rise to
a symmetric 
$(n-1)\times(n-1)$
complex Gram matrix (where no conjugation is used),
$G(\p)$, of rank at most
$d$.  Conversely, any symmetric complex matrix $\G$ of rank $d$ or
less can be (Tagaki) factorized, giving rise to a complex configuration of
$n-1$ vectors in $\CC^d$, which, along with the origin, gives us an $n$-point
complex configuration $\p$ so that $\G=G(\p)$.

With this in place, 
let $\varphi$ be the 
invertible linear map 
from the space of 
$(n-1)\times(n-1)$ 
symmetric complex
matrices $\G$, to $\CC^{\edgecard}$
(indexed by vertex pairs $ij$, with $i<j$)
defined as
$\varphi(\G)_{ij}:= G_{ii} + G_{jj} -2G_{ij}$
(where $G_{in}$ and $G_{nj}$ is interpreted as
$0$).
(For invertibility see~\cite[Lemma 7]{cgr}.) 

When $\G=G(\p)$ is the gram matrix of a 
complex configuration $\p$ in $\CC^d$, then $\varphi(\G)$ 
computes the squared edge lengths of $\p$.
Since every symmetric matrix of rank at most $d$ 
arises 
as the Gram matrix, $G(\p)$ from some complex
configuration $\p$ in $\CC^d$, we see that the image of $\varphi$ 
acting on 
$\CS^{n-1}_d$,
is contained in 
$M_{d,n}$.
Conversely, 
since every point in $M_{d,n}$ arises from 
a complex configuration $\p$, and $\p$ gives rise to a Gram matrix 
$G(\p)$, we see that the image of $\varphi$ acting on rank 
constrained matrices is onto $M_{d,n}$.
This gives us our isomorphism of varieties (Lemma~\ref{lem:bij}.)

Irreducibility of $M_{d,n}$ follows from the fact that 
it is the image of an affine space (complex configuration space)
under a polynomial (the squared-length map).
The dimension follows from the dimension of 
$\CS^{n-1}_d$ which is 
$d(n-1) - \binom{d}{2}$ (this is consistent with a degree of freedom count;
see e.g., ~\cite{sym} for details).

For the description of the singular set of determinantal 
varieties of rank-constrained matrices,
see for example~\cite[Page 184]{harris} 
(which can also be applied to
the symmetric case). Meanwhile, we know that
$\G=G(\p)$ has rank $<d$
iff $\p$ has a deficient affine span 
in $\CC^d$ (see for example~\cite[Lemma 26]{cgr}). 
For an explicit statement about the singular set of
$M_{d,n}$, see~\cite[Proposition 4.5]{ciprian}.

\end{proof}

\begin{remark}
We note, but will not need, the following:
For $d\ge 1$,
the smallest complex variety containing   
$M^{\EE}_{d,n}$ is $M_{d,n}$.
\end{remark}
We note the following minimal instances where $n=d+2$.
In these cases, the variety has codimension $1$.

The variety $M_{1,3} \subset \CC^3$ is defined by the vanishing of the \defn{simplicial volume determinant}, that is, the determinant of the following matrix
\ba
\begin{pmatrix}
2m_{13}& (m_{13} + m_{23} - m_{12})\\
(m_{13} + m_{23} - m_{12})& 2m_{23}\\
\end{pmatrix}
\ea
where we use $(m_{12}, m_{13}, m_{23})$ to represent the coordinates of $\CC^3$. This is the Gram matrix, $\varphi^{-1}(m(\p))$, described in the proof of Theorem~\ref{thm:Mvariety}. 

The variety $M_{2,4} \subset \CC^6$ is defined by the vanishing of the 
determinant of the matrix
\ba
\begin{pmatrix}
2m_{14}& (m_{14} + m_{24} - m_{12})& (m_{14} + 
m_{34} - m_{13})\\
(m_{14} + m_{24} - m_{12})& 2m_{24}& (m_{24} + m_{34} - m_{23})\\
(m_{14} + m_{34} - m_{13})& (m_{24} + m_{34} - m_{23})& 
2m_{34}
\end{pmatrix}.
\ea

The variety $M_{3,5} \subset \CC^{10}$ is defined by the vanishing of the determinant of the matrix
\ba
\begin{pmatrix}
2m_{15}& (m_{15} + m_{25} - m_{12})& (m_{15} + 
m_{35} - m_{13})&  (m_{15} + m_{45} - m_{14})\\
(m_{15} + m_{25} - m_{12})& 2m_{25}& (m_{25} + m_{35} - m_{23})&  
(m_{25} + m_{45} - m_{24})\\
(m_{15} + m_{35} - m_{13})& (m_{25} + m_{35} - m_{23})& 
2m_{35}&  (m_{35} + m_{45} - m_{34})\\
(m_{15} + m_{45} - m_{14})& (m_{25} + m_{45} - m_{24})&
 (m_{35} + m_{45} - m_{34})& 2m_{45}
\end{pmatrix}.
\ea
These same polynomial calculations can be done by 
constructing the Cayley-Menger determinants.

When $n>d+2$, then $M_{d,n}$ has higher codimension, and 
requires the simultaneous vanishing of more
than one minor, characterizing the rank $d$.

Next we move on to unsquared lengths.


\begin{definition}
Let the \defn{squaring map} $s(\cdot)$ be the map from $\CC^{\edgecard}$ 
onto $\CC^{\edgecard}$ that
acts by squaring each of the $\edgecard$ coordinates of a point.
Let $L_{d,n}$ be the preimage of $M_{d,n}$ under the squaring map.
(Each point in $M_{d,n}$ has $2^{\edgecard}$ preimages in $L_{d,n}$, arising
through coordinate negations).
We call this the \defn{unsquared measurement variety} of $n$ points
in $d$ dimensions.
\end{definition}

\begin{definition}
We can define the \defn{Euclidean length map}
of a real configuration $\p$ as
\ba
l_{ij}(\p) := \sqrt{\sum_{k=1}^{d}(\p^k_i-\p^k_j)^2}
\ea
where we use the positive square root.
We call the image of $\p$ under
$l$ the \defn{Euclidean unsquared measurement set} denoted as 
$L^{\EE}_{d,n} \subset \RR^{\edgecard}$.
Under the squaring map, we get
$M^{\EE}_{d,n}$. 
We denote by $l(\p)$,
the vector $[l_{ij}(\p)]$ over all vertex 
pairs. We may consider $l(\p)$ either as a point in 
the real valued $L^{\EE}_{d,n}$
or as a point in 
the complex variety $L_{d,n}$.
\end{definition}

Indeed, $L^{\EE}_{d,n}$
is the set we are often interested in for applications,
but it will be easier to work with the whole variety $L_{d,n}$. 

\begin{remark}
The locus of $\LL$ where the edge lengths of a triangle,
$(l_{12}, l_{13}, l_{23})$, are held fixed is studied in 
beautiful detail in~\cite{marco}, where this is shown to be a Kummer surface.
\end{remark}

The following theorem is the main result of this section.
\begin{theorem}
\label{thm:Lvariety}
Let $n \ge d+2$.
$\LN$ is a variety. It has pure dimension
$dn-C$.
Assuming that $d \geq 2$, we also have the following:
$\LN$ is irreducible.
\end{theorem}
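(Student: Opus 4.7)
The plan is to realize $L_{d,n}$ through the incidence variety of configurations paired with consistent unsquared lengths, and then transfer structure to $L_{d,n}$. First, $L_{d,n}$ is a variety as the preimage of the variety $M_{d,n}$ under the polynomial squaring map $s$. For pure dimension, note that $s\colon \CC^N \to \CC^N$ is finite, surjective, and flat: $\CC[l_{ij}]$ is free of rank $2^N$ over $\CC[l_{ij}^2]$, with basis the squarefree monomials. Since $M_{d,n}$ is pure of dimension $dn - C$ by Theorem~\ref{thm:Mvariety} and finite flat pullback preserves pure dimensionality, $L_{d,n}$ is pure of dimension $dn - C$ for all $d \ge 1$.

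For irreducibility when $d \ge 2$, I introduce the incidence variety
\[ W := \{(\p, \bl) \in \CC^{dn} \times \CC^N : l_{ij}^2 = m_{ij}(\p) \text{ for all } i < j\}, \]
with projections $\pi_1\colon W \to \CC^{dn}$ and $\pi_2\colon W \to \CC^N$. The map $\pi_1$ is finite of degree $2^N$, so $W$ is pure of dimension $dn$, and $\pi_2(W) = L_{d,n}$ by unpacking the definitions. Over the open set $U := \{\p : m_{ij}(\p) \ne 0\ \forall\,i < j\}$, $\pi_1$ is an \'etale cover of degree $2^N$ with deck group $(\ZZ/2)^N$ acting by sign flips on the $l_{ij}$. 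Connectedness of this cover is equivalent to transitivity of the monodromy action on a fiber, which I establish edge-by-edge.

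The key local computation is that for each edge $ij$, monodromy of $\pi_1$ around the hypersurface $H_{ij} := \{m_{ij}(\p) = 0\}$ flips the sign of $l_{ij}$ and fixes the other $l_{kl}$. For this I need a smooth point $\p_0 \in H_{ij}$ with $\p_i \ne \p_j$ lying outside every other $H_{kl}$: the condition that $\p_i - \p_j$ be non-zero and isotropic admits a solution in $\CC^d$ precisely when $d \ge 2$ (for instance, $(1, i, 0, \ldots, 0)$), and at such a $\p_0$ the differential $dm_{ij}$ is non-zero, so $m_{ij}$ vanishes to first order and the local model is the standard branched double cover $l_{ij}^2 = m_{ij}$, whose monodromy flips $l_{ij}$; the other equations $l_{kl}^2 = m_{kl}(\p)$ for $kl \ne ij$ are locally $l_{kl}^2 = \text{(nonzero constant)}$ and contribute nothing. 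Thus the monodromy image is all of $(\ZZ/2)^N$, the \'etale cover is connected and hence irreducible (being smooth over the smooth irreducible $U$), its complement in $W$ projects finitely to the lower-dimensional $\bigcup_{ij} H_{ij}$, so $W$ is irreducible, and therefore $L_{d,n} = \pi_2(W)$ is irreducible.

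The main obstacle is precisely the need for non-zero isotropic vectors in $\CC^d$, which fails in $d = 1$: there $m_{ij}(\p) = (\p_i - \p_j)^2$ is a perfect square, $H_{ij}$ is everywhere non-reduced, and the defining equation $l_{ij}^2 = m_{ij}(\p)$ factors as $(l_{ij} - (\p_i - \p_j))(l_{ij} + (\p_i - \p_j)) = 0$ with no branching, consistent with $L_{1,n}$ being a union of linear subspaces.
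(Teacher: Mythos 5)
Your argument is correct, and while it hinges on the same monodromy insight as the paper's --- a loop in configuration space around the locus $\{m_{ij}(\p)=0\}$, and no other such locus, flips the sign of $l_{ij}$ alone, and these single flips generate the full deck group $(\ZZ/2)^N$ --- it packages the proof differently in two useful ways. For irreducibility, the paper works directly on $L_{d,n}$ as a $2^N$-fold cover of $\good(M_{d,n})$, constructs an explicit geometric gadget (Lemma~\ref{lem:Lneg}: a configuration with one edge's squared length dominated by a separate coordinate, rotated by $e^{-t\sqrt{-1}}$) to realize a single sign flip, and then invokes the result that a variety whose smooth locus is connected and Zariski-dense is irreducible (Theorem~\ref{thm:conIrr}). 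You instead pass to the incidence variety $W$ as a finite flat degree-$2^N$ cover of the smooth irreducible affine space $\CC^{dn}$ and read off the one-edge monodromy from the standard branched-double-cover normal form at a smooth point of $\{m_{ij}=0\}$; this requires a nonzero isotropic $\p_i-\p_j$, which is exactly where $d\ge 2$ enters in both proofs, but it trades the paper's explicit path construction for a local analysis, and trades Theorem~\ref{thm:conIrr} for the easier fact that a connected smooth variety over a smooth irreducible base is irreducible. For pure dimension, your observation that $s$ is finite and flat (free of rank $2^N$ on squarefree monomials) is also more direct than the paper's route through analytic smoothness of the good locus plus Zariski density (Lemmas~\ref{lem:Asmooth}, \ref{lem:closeG}, \ref{lem:dim}). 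One small point worth flagging explicitly: the identity $\pi_2(W)=L_{d,n}$ uses that $M_{d,n}$ is the literal set-theoretic image of $m$ (not merely its Zariski closure); the paper's Theorem~\ref{thm:Mvariety} does supply this via the Takagi factorization, and it is the hinge of your last step. Your closing remark --- that in $d=1$ the hypersurfaces $\{m_{ij}=0\}$ are non-reduced, killing the branching --- is a nice conceptual explanation of the reducibility of $L_{1,n}$ that the paper leaves implicit.
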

The proof is in the next subsection.
The non-trivial  part will be showing irreducibility, which 
we will do in Proposition~\ref{prop:irr} below.
Indeed,
in one dimension, the variety $L_{1,3}$ is reducible.

\begin{remark}
We note, but will not need the following:
For $d\geq 2$,
the smallest complex variety containing   
$L^{\EE}_{d,n}$ is $L_{d,n}$.
\end{remark}

Returning to our minimal examples:
The variety $L_{1,3} \subset \CC^3$ is defined by the vanishing of the determinant of the
following matrix
\ba
\begin{pmatrix}
2l^2_{13}& (l^2_{13} + l^2_{23} - l^2_{12})
\\
(l^2_{13} + l^2_{23} - l^2_{12})& 2l^2_{23}
\end{pmatrix}
\ea
where we use $(l_{12}, l_{13}, l_{23})$ to represent the coordinates of $\CC^3$.

The variety $L_{2,4} \subset \CC^6$ is defined by the vanishing of the determinant of the matrix
\ba
\begin{pmatrix}
2l^2_{14}& (l^2_{14} + l^2_{24} - l^2_{12})& (l^2_{14} + 
l^2_{34} - l^2_{13})\\
(l^2_{14} + l^2_{24} - l^2_{12})& 2l^2_{24}& (l^2_{24} + l^2_{34} - l^2_{23})
\\
(l^2_{14} + l^2_{34} - l^2_{13})& (l^2_{24} + l^2_{34} - l^2_{23})& 
2l^2_{34}
\end{pmatrix}.
\ea

The variety $L_{3,5}\subset \CC^{10}$ is defined by the vanishing of the determinant of the matrix
\ba
\begin{pmatrix}
2l^2_{15}& (l^2_{15} + l^2_{25} - l^2_{12})& (l^2_{15} + 
l^2_{35} - l^2_{13})&  (l^2_{15} + l^2_{45} - l^2_{14})\\
(l^2_{15} + l^2_{25} - l^2_{12})& 2l^2_{25}& (l^2_{25} + l^2_{35} - l^2_{23})&  
(l^2_{25} + l^2_{45} - l^2_{24})\\
(l^2_{15} + l^2_{35} - l^2_{13})& (l^2_{25} + l^2_{35} - l^2_{23})& 
2l^2_{35}&  (l^2_{35} + l^2_{45} - l^2_{34})\\
(l^2_{15} + l^2_{45} - l^2_{14})& (l^2_{25} + l^2_{45} - l^2_{24})&
 (l^2_{35} + l^2_{45} - l^2_{34})& 2l^2_{45}
\end{pmatrix}.
\ea

\begin{figure}[ht]
	\begin{center}
		\includegraphics[width=2.0in]{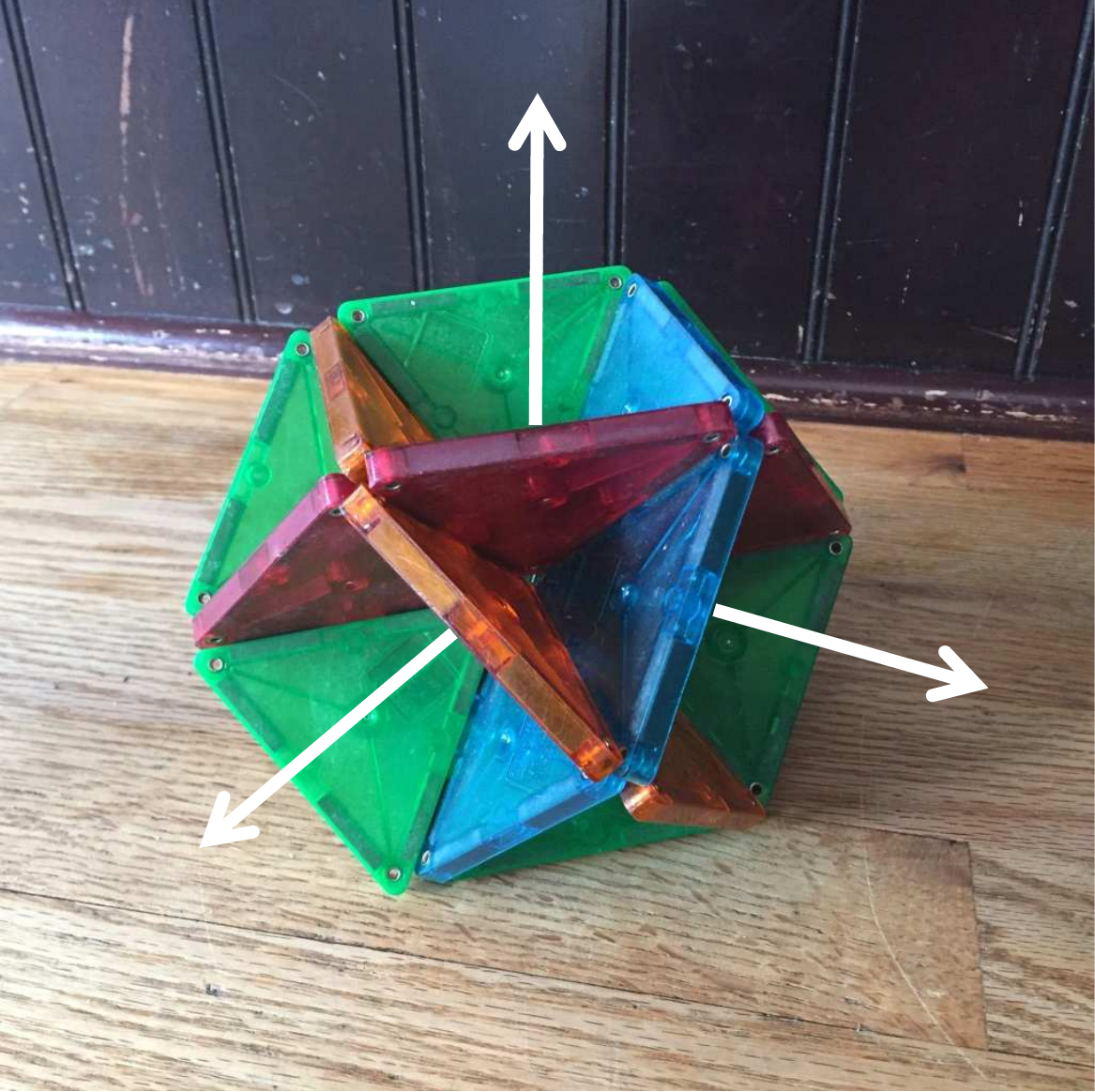}
	\end{center}
	\caption{A model of the real locus of $L_{1,3}$, a subset of $\RR^3$. It comprises $4$ planes. Coordinate axes are in white.}\label{fig:l13}
\end{figure}

\begin{remark}
It turns out that $L_{1,3}$ is reducible and consists of the four hyperspaces defined,
respectively, by the vanishing of one of the following equations:
\ba
l_{12} + l_{23} - l_{13} \\
l_{12} - l_{23} + l_{13} \\
-l_{12} +l_{23} + l_{13} \\
l_{12} + l_{23} + l_{13} 
\ea
This reducibility can make the one-dimensional case quite different from dimensions 2 and 3.

Notice that the first octant of the real locus of $3$ of these hyperspaces 
arises as the Euclidean lengths of a triangle in $\RR^1$ (that is,
these make up $L^{\EE}_{1,3}$). 
The specific hyperplane
is determined by the order of the $3$ points on the line.
\end{remark}

\subsection{Proof}
\label{sec:lproof}

We will now develop the proof of Theorem~\ref{thm:Lvariety}. The main issue will be
proving the irreducibility of $\LN$. 
The special case of $n=d+2$ follows from~\cite{cmirr}, but
we are interested in the general case, $n \ge d+2$.
The basic idea we will use is that a variety whose smooth locus
is connected must be irreducible.
More specifically, 
our strategy is to define a ``good'' locus of points in 
$\LN$, and show that this locus is connected,
made up of smooth points, and with its Zariski closure 
equal to $\LN$.
This, along with Theorem~\ref{thm:conIrr}, will prove irreducibility.
Note that when the word ``Zariski'' is not attached
to a topological term, you can interpret the term
in the standard topology.

We will show connectivity using a specific path construction. This
will rely centrally on the complex setting that we have placed ourselves in.  
Showing (algebraic) smoothness will mostly be a technical matter.

\begin{definition}
Let the \defn{zero} locus $Z$ of $\CC^N$ be the points where at least one
coordinate vanishes.

Let the \defn{bad} locus $\bad(M_{d,n})$ of $M_{d,n}$
be the union of its singular locus
$\sing(M_{d,n})$ together with the points in $M_{d,n}$ that are in $Z$.
We will call the remaining locus $\good(M_{d,n})$ \defn{good}.

Let the \defn{bad} locus $\bad(L_{d,n})$ of $L_{d,n}$ be the 
preimage of the bad locus of $M_{d,n}$ under the squaring map $s$.
We will call the remaining locus $\good(L_{d,n})$ \defn{good}. 

We refer to points on the good locus
as \defn{good} points, and analogously for \defn{bad} points.
\end{definition}

\begin{lemma}
\label{lem:Mcon}
$\good(M_{d,n})$ is path-connected.
\end{lemma}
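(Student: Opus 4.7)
The plan is to lift the question from $\good(M_{d,n})$ up to complex configuration space and exploit the fact that, over $\CC$, the complement of any proper algebraic subvariety inside a smooth irreducible variety is path-connected in the standard (Euclidean) topology, because such a complement has real codimension at least $2$.

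More concretely, consider the squared-length parametrization $m\colon \CC^{dn}\to M_{d,n}$, and let $B\subset \CC^{dn}$ be the preimage of $\bad(M_{d,n})$ under $m$. Then $B$ decomposes as the union of two Zariski-closed pieces: first, the locus of configurations whose Gram matrix $G(\p)$ has rank strictly less than $d$, cut out by the vanishing of all $(d{+}1)\times(d{+}1)$ minors (this is the preimage of $\sing(M_{d,n})$ by Theorem~\ref{thm:Mvariety}); and second, for each edge $\{i,j\}$, the hypersurface $\{m_{ij}(\p)=0\}$. Because $n\ge d+2$, a generic $\p$ has full $d$-dimensional affine span, and generically no $m_{ij}$ vanishes, so $B$ is a proper Zariski-closed subset of $\CC^{dn}$.

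Now set $U:=\CC^{dn}\setminus B$. Since $\CC^{dn}$ is a smooth connected complex manifold and $B$ is a proper algebraic subvariety, $U$ is a nonempty open subset whose complement has real codimension at least $2$. A standard fact (a proper complex analytic subset does not locally separate a connected complex manifold) then gives that $U$ is path-connected in the standard topology. By construction, $m(U)=\good(M_{d,n})$: configurations in $U$ have Gram matrix of rank exactly $d$, so their images lie in the smooth locus, and all their edge squared lengths are nonzero, so their images avoid the zero locus $Z$; conversely, any good measurement point arises from such a configuration via the Takagi factorization argument used in Theorem~\ref{thm:Mvariety}. Since a continuous image of a path-connected set is path-connected, $\good(M_{d,n})$ is path-connected.

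The only substantive step is the topological fact that deleting a proper complex algebraic subvariety from $\CC^{dn}$ preserves path-connectedness in the Euclidean topology; this is the place where working over $\CC$ rather than $\RR$ is essential, since the real locus of $B$ can have real codimension $1$ and disconnect $\RR^{dn}$ (as is visible already in the $L_{1,3}$ picture of Figure~\ref{fig:l13}). Verifying that $B$ is a proper subvariety and that $m(U)=\good(M_{d,n})$ is routine bookkeeping given Theorem~\ref{thm:Mvariety}.
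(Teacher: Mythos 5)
Your proof is correct and takes essentially the same approach as the paper: lift to configuration space, observe that the preimage of the bad locus is a proper complex algebraic subvariety (hence of real codimension at least $2$), so its complement $U$ is path-connected, and push forward by $m$. One small slip: rank $<d$ for the Gram matrix is cut out by the $d\times d$ minors, not the $(d{+}1)\times(d{+}1)$ ones (those vanish identically on $\CC^{dn}$ since the Gram matrix of a $\CC^d$-configuration always has rank at most $d$), though this does not affect the argument.
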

\begin{proof}
Let $\bm_1$ and $\bm_2$ be any two 
good points in $M_{d,n}$. These correspond to 
two configurations $\p$ and $\q$. A path in configuration space, connecting $\p$ to $\q$,
will remain, under $m(\cdot)$, on $\good(M_{d,n})$
when the affine span of the configuration does not drop in dimension,
and no edge between any two points has zero squared length.
This can always be done, as we have $n \geq d+2$ points.
(This is even true for one-dimensional configurations
in the complex setting, as a zero squared length is a condition
that has complex-codimension of at least $1$, and thus the bad locus is non-separating.)
\end{proof}

We next record a lemma that follows from basic results
of covering space theory.  
See 
\cite[Sections 53, 54]{munk} for more details.
\begin{definition}
A \defn{path} 
$\tau$ on a space $X$ is a continuous  map from the unit interval to $X$.
A \defn{loop} is a path with $\tau(0)=\tau(1)$.
Let $p$ be a map from a space $\tilde{X}$ to $X$.
A \defn{lift} $\tilde{\tau}$ of $\tau$ (under $p$) is a map
such that $p(\tilde{\tau})=\tau$. It is a path on $\tilde{X}$.
\end{definition}
Intuitively, a lift is just tracing out 
the path $\tau$ in the preimage through $p$. In what follows, 
$\CC^\times$ is the punctured complex plane.
\begin{lemma}\label{lem:z2-cover}
Let $p$ be the map $\CC^\times\to \CC^\times$ given by $z\mapsto z^2$.
Let $x:=p(z)$.
A loop $\tau$ starting
at $x$ uniquely lifts to a loop $\tilde{\tau}$ starting at $z$
if $\tau$ winds 
around the origin an even number of times, and otherwise 
it lifts to a path that ends at $-z$.
\end{lemma}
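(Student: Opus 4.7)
The plan is to treat $p\colon\CC^\times\to\CC^\times$, $z\mapsto z^2$, as a topological covering map of degree $2$, and then combine the standard path-lifting property of covering maps with an explicit polar-coordinate formula for the lift to read off how the endpoint depends on the winding number. Uniqueness will follow from general covering-space theory; the only real content is identifying the endpoint.

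First I would verify that $p$ is a covering map. Given any $x=re^{i\theta}\in\CC^\times$ with $r>0$, pick a small open neighborhood $U$ of $x$ that does not meet the origin and does not wrap around it (e.g.\ contained in a sector of angular width less than $2\pi$). Then $p^{-1}(U)$ is a disjoint union of two open sets, each mapped homeomorphically onto $U$ by $p$, so $p$ is a covering of degree $2$. In particular, the unique path lifting property applies: any continuous $\tau\colon[0,1]\to\CC^\times$ with $\tau(0)=x$ has a unique continuous lift $\tilde{\tau}$ with $\tilde{\tau}(0)=z$, for each preimage $z$ of $x$. This already establishes existence and uniqueness of the lift.

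To identify the endpoint $\tilde{\tau}(1)$, I would build the lift by hand. Since $\RR\to\CC^\times$, $t\mapsto e^{it}$, is itself a covering, the argument of $\tau$ lifts continuously: write $\tau(t)=r(t)e^{i\theta(t)}$ with $r\colon[0,1]\to\RR_{>0}$ and $\theta\colon[0,1]\to\RR$ continuous. If $\tau$ is a loop based at $x$, its winding number about $0$ is
\[
k \;=\; \frac{\theta(1)-\theta(0)}{2\pi}\in\ZZ.
\]
Starting from $z=\sqrt{r(0)}\,e^{i\theta(0)/2}$, define
\[
\tilde{\tau}(t) \;:=\; \sqrt{r(t)}\,e^{i\theta(t)/2}.
\]
This is continuous, satisfies $p(\tilde{\tau}(t))=r(t)e^{i\theta(t)}=\tau(t)$, and $\tilde{\tau}(0)=z$, so by uniqueness it is the lift.

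Evaluating at $t=1$ and using $\theta(1)=\theta(0)+2\pi k$ together with $r(1)=r(0)$ gives
\[
\tilde{\tau}(1) \;=\; \sqrt{r(0)}\,e^{i(\theta(0)+2\pi k)/2} \;=\; z\cdot e^{i\pi k} \;=\; (-1)^k z,
\]
so $\tilde{\tau}(1)=z$ (a loop) when $k$ is even, and $\tilde{\tau}(1)=-z$ when $k$ is odd. The only step that requires any care is the continuous lift of the argument $\theta$, which is the mild technical obstacle; it follows from the covering property of the real exponential, and after that the identification of the monodromy is a one-line computation.
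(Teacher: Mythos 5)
Your proof is correct, but it takes a somewhat different route from the paper. The paper invokes the general covering-space machinery directly: it cites the lifting correspondence, identifies $\pi_1(\CC^\times)\cong\ZZ$, observes that $p_*\pi_1(\CC^\times)\cong 2\ZZ$, and reads off the answer from the bijection between $\pi_1(B)/p_*(\pi_1(F))\cong\ZZ_2$ and the two-point fiber. You instead build the lift explicitly: write $\tau$ in polar form $r(t)e^{i\theta(t)}$, lift $\theta$ through the covering $\RR\to S^1$, set $\tilde\tau(t)=\sqrt{r(t)}\,e^{i\theta(t)/2}$, and directly compute that $\tilde\tau(1)=(-1)^k z$ where $k$ is the winding number. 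The two approaches are equivalent in content; yours is more elementary and self-contained (it does not presuppose familiarity with the lifting-correspondence theorem and exhibits the monodromy as a concrete one-line calculation), while the paper's is shorter to state but leans on cited theorems from Munkres. One small remark: you still quietly use unique path lifting to justify that your hand-built $\tilde\tau$ is \emph{the} lift, so you have not entirely avoided covering-space theory --- but you use it only for uniqueness, not for the monodromy identification, which is the real content of the lemma.
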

\begin{proof}[Proof Sketch]
See 
\cite[Chapters 53, 54]{munk} for definitions.
The map $\CC^\times\to \CC^\times$ given by $z\mapsto z^2$
is a  covering map. 
Call the base $B$ and the cover $F$ and the 
covering map $p$.  
Each loop $\tau$ in $B$, starting at $x$, lifts uniquely to a path $\tilde{\tau}$
in $F$, starting at $z$.
The path $\tilde{\tau}$
ends at a uniquely defined point $z' \in p^{-1}(x)$
under the \defn{lifting correspondence}. 
In our case the fiber is $\{z,-z\}$. Moreover every $z'$
in the fiber can be reached under the lifting of some loop $\tau$  (see~\cite[Theorem 54.4]{munk}).

The fundamental group of the base is
$\pi_1(B) = 
\pi_1(\CC^\times)\cong \ZZ$.
The covering map determines an
induced map
$p_* : \pi_1(F)\to \pi_1(B)$. The image of the induced map
consists of 
loops that wind around the origin an even number of 
times in $F$ so it is isomorphic to $2\ZZ$.
The lifting correspondence induces a bijective  map from the 
group $\pi_1(B)/p_*(\pi_1(F))\cong \ZZ_2$ to the fiber above $x$, and (only) loops in 
$p_*(\pi_1(F))$ lift to loops in $F$.
(see~\cite[Theorem 54.6]{munk}). 

Thus, this lift, starting from $z$,
is a path from $z$ to $-z$ if and only if 
$\tau$ 
winds around the origin an odd number of 
times.
\end{proof}

\begin{figure}[t]
	\centering
	\def\svgwidth{1.5in}
	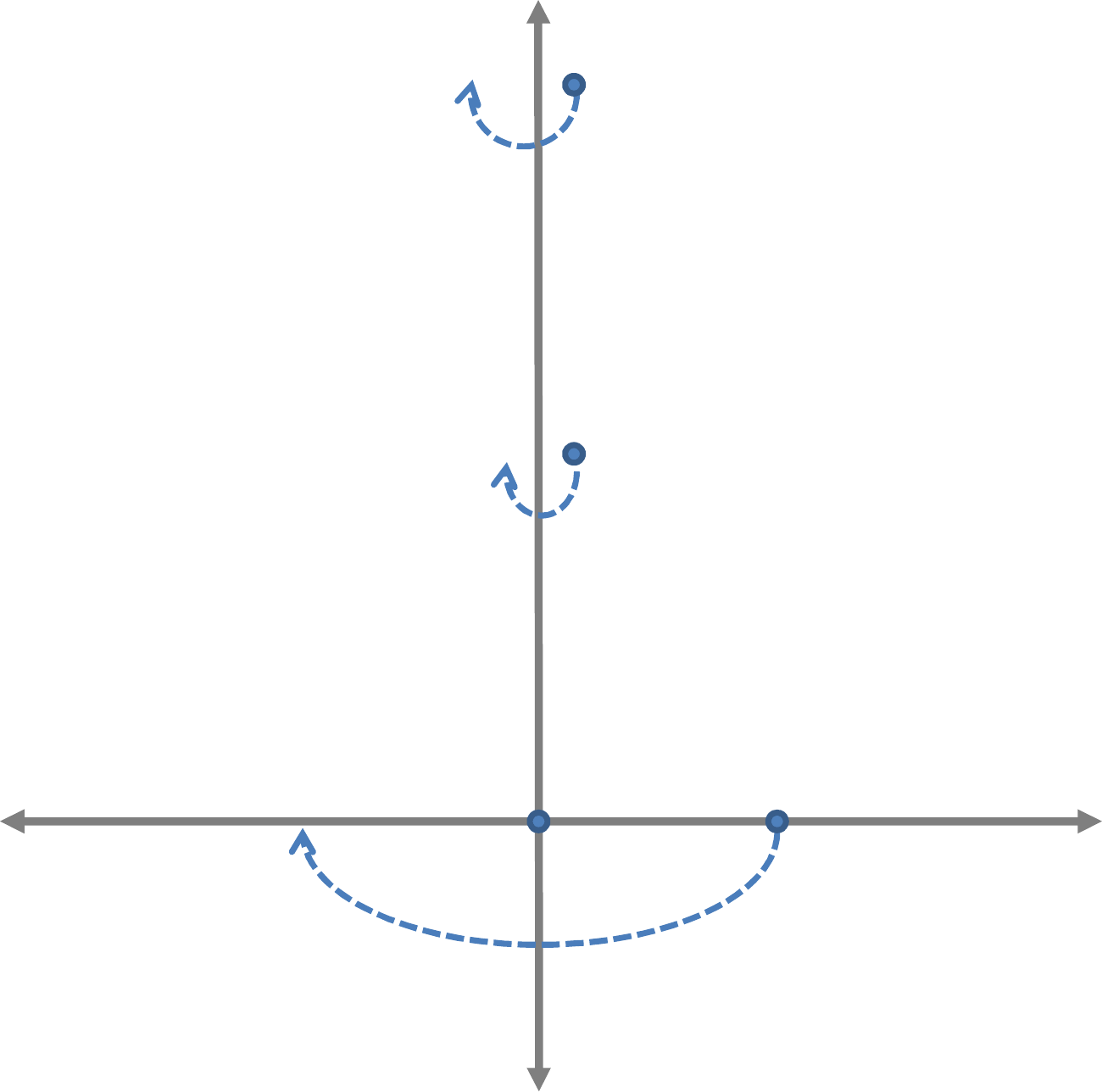
	\caption{Our gadget. The imaginary $x$-direction is coming out of the page. Our path ends with 
the reflection of the configuration $\q$ along the $x$-axis.}\label{fig:gadget}
\end{figure}

\begin{figure}[ht]
	\centering
	\def\svgwidth{0.8\textwidth}
	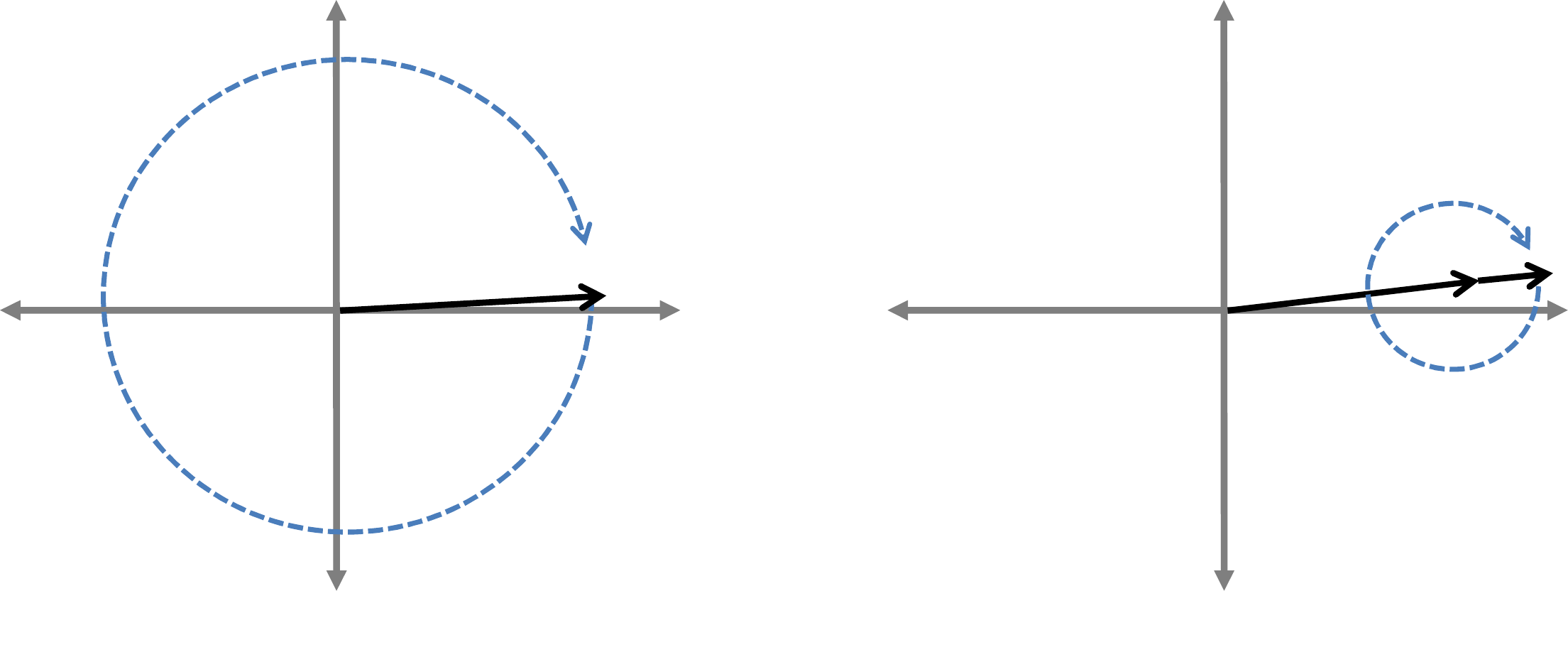
	\caption{Since the squared length along edge $\{1,2\}$ arises from its $x$ component, our path along this edge measurement winds once about the origin in $\CC$. For any other edge, the $x$ component of the squared distance is dominated by the other coordinates and the resulting path stays far from the origin in $\CC$.}\label{fig:wind}
\end{figure}

Looking at the product space $(\CC^{\times})^\edgecard$, we can also view the squaring map
$s$ as a covering map mapping this product space to itself, 
and we can apply Lemma~\ref{lem:z2-cover} coordinate-wise.

\begin{lemma}
\label{lem:Lneg}
Assume $d \ge 2$.
Suppose $\bl$ and $\bl'$ are two points in $\LN$ that 
differ only by a negation along one
coordinate. Then, there is a path that connects 
$\bl$ to $\bl'$ and stays in $\good(L_{d,n})$.
\end{lemma}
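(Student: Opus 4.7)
The plan is to produce a loop $\tau$ in $\good(M_{d,n})$ based at some $\bm^\star$ whose coordinate-wise winding numbers about the origin in $\CC$ are $1$ on the coordinate to be flipped and $0$ on every other coordinate, and then lift $\tau$ through the squaring map. Restricted to $(\CC^\times)^{\edgecard}$ the map $s$ is an unramified $2^{\edgecard}$-fold topological covering, so Lemma~\ref{lem:z2-cover} applies coordinate-wise to determine the endpoint of a lift.

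Without loss of generality, by applying a vertex relabeling (an expected automorphism which preserves $\good(L_{d,n})$), I may assume the coordinate to be negated corresponds to the edge $\{1,2\}$.

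The heart of the argument is constructing the gadget $\tau$. The essential algebraic input, which is exactly why the hypothesis $d \ge 2$ is needed, is the factorization $x^2 + y^2 = (x+iy)(x-iy)$, which allows the squared distance to accumulate odd winding by letting one factor wind while the other stays constant. Set $\q_2 = \vec{0}$, and for $k\ge 3$ place $\q_k$ at a point with large nonzero entries on coordinate axes other than the first, chosen so that the family $\{\q_3,\dots,\q_n\}$ spans a $(d-1)$-dimensional affine subspace of $\CC^d$ orthogonal to the first axis (possible since $n \ge d+2$). Define
\[
\q_1(t) = (x(t), y(t), 0, \dots, 0), \qquad x(t) + i y(t) = e^{2\pi i t}, \qquad x(t) - i y(t) = 2.
\]
Then $m_{12}(\q(t)) = x(t)^2 + y(t)^2 = 2\, e^{2\pi i t}$, a loop in $\CC^\times$ winding exactly once about $0$. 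For $k \ge 3$, the squared distance $m_{1k}(\q(t))$ differs from the nonzero constant $\|\q_k\|^2$ by terms that are bounded by a fixed constant plus a linear term in the entries of $\q_k$, so by rescaling the $\q_k$ to have sufficiently large magnitudes, $m_{1k}(\q(t))$ remains in a small neighborhood of a large nonzero value, hence it never vanishes and has winding $0$. The remaining squared distances $m_{jk}$ with $j,k \ge 2$ are constants, nonzero by construction. Finally, $x(t) = \tfrac12(e^{2\pi i t} + 2)$ never vanishes, so $\q_1(t)$ never joins the affine span of $\q_2,\dots,\q_n$, and the configuration $\q(t)$ has full affine dimension $d$ throughout.

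Thus $\tau := m \circ \q$ is a loop in $\good(M_{d,n})$ based at $\bm^\star := m(\q(0))$ with the required winding profile. By Lemma~\ref{lem:Mcon}, there is a path $\gamma$ in $\good(M_{d,n})$ from $s(\bl)$ to $\bm^\star$, and the concatenated loop $\gamma \cdot \tau \cdot \gamma^{-1}$, based at $s(\bl)$, has the same coordinate-wise winding numbers as $\tau$. Lifting this loop uniquely from $\bl$ through the covering $s$ keeps the lift inside $\good(L_{d,n})$, and by Lemma~\ref{lem:z2-cover} applied in each coordinate, the lifted path ends at $\bl$ with the $\{1,2\}$-coordinate negated, i.e., at $\bl'$.

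\textbf{Main obstacle.} The crux is the gadget construction: simultaneously forcing odd winding on one coordinate while keeping every other coordinate both nonvanishing and of winding zero, without collapsing the affine span. The complex factorization $x^2 + y^2 = (x + iy)(x - iy)$ is the single idea that makes odd winding achievable on a single squared-distance coordinate (no analogue exists in $d = 1$, which is consistent with the reducibility of $L_{1,3}$); the remaining constraints are then met by placing $\q_3,\dots,\q_n$ far away and verifying non-degeneracy of $\q_1(t)$.
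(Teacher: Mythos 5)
Your proof is correct and follows essentially the same approach as the paper: construct a gadget loop in $\good(M_{d,n})$ whose winding profile is $1$ on the target coordinate and $0$ elsewhere, conjugate by a connecting path from $s(\bl)$, and lift through the squaring map using Lemma~\ref{lem:z2-cover}. The gadget details differ slightly (the paper places $\q_1$ at the origin and $\q_2$ on the first axis, then multiplies all first coordinates by $e^{-t\sqrt{-1}}$; you fix $\q_2$ at the origin and move only $\q_1$ using the factorization $x^2+y^2=(x+iy)(x-iy)$ explicitly), but the structure of the argument is identical.
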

\begin{proof}
W.l.o.g., we will negate the coordinate corresponding to the edge
lengths between vertices $1$ and $2$.
But first, we need to develop a little gadget.

Let $\q$ be a special configuration with the following properties:
$\q_1$ is at the origin, $\q_2$ is placed one unit along the first axis of 
$\CC^d$; and the remaining points are arranged so that they all lie
within $\epsilon$ of the second axis in $\CC^d$, but such that
they are greater than one unit apart along the second axis
from each other and also from $\q_1$. 
(Note that this step requires that $d\geq 2$.)
Moreover we choose the remaining
points so that $\q$ has a full $d$-dimensional affine span.
This configuration has the following property:
the squared distances of all of the edges are dominated by the
contribution from the second coordinate, except for the squared distance
along the edge $\{1,2\}$, which is dominated by the contribution
from its first coordinate. See Figure~\ref{fig:gadget}.

Let $a(t)$ be the path in configuration space, parameterized by
$t \in [0,\pi]$ where, for each $i$, we multiply the first
coordinate of $\q_i$ by $e^{-t\sqrt{-1}}$. 
This path ends at 
$a(\pi)$, 
a configuration
which is a reflection of $\q$. 

Under $m$, this gives us
a loop $\tau:=m(a)$ in $M_{d,n}$ that starts and ends at the point
$\y:= m(\q)$. 
By construction, the loop
$\tau$ avoids any
singularities or vanishing coordinates.
Fixing one point $\z$ in $s^{-1}(\y)$, the loop $\tau$ lifts
to a path $\tilde{\tau}$ in $L_{d,n}$
that ends at some 
point $\z'$  in 
the fiber $s^{-1}(\y)$. Moreover, this path remains in $\good(L_{d,n})$. 

If we project $\tau$
onto the coordinate of $\CC^\edgecard$ corresponding
to the edge $\{1,2\}$, we see that the image
maps to a loop that
winds around the origin of $\CC$ exactly once. 
If we project this loop onto any
of the other coordinates, we obtain a loop  that cannot wind
about the origin of $\CC$ at all. See Figure~\ref{fig:wind}.
By Lemma~\ref{lem:z2-cover},
the lifted loop $\tilde{\tau}$ 
in $\LN$ must end at the point $\z'$ that arises from
$\z$ by negating the first coordinate.

Going now back to our problem, let $\p$ be any configuration such that 
$m(\p) = s(\bl)$. Let $w$ be a configuration path 
from $\p$ to our special $\q$. 
Let $\omega := m(w)$. From  Lemma~\ref{lem:Mcon} this path can 
be chosen to avoid any singular points or points where a coordinate
vanishes.
Let the concatenated path
$\sigma$ be $\omega^{-1} \circ \tau \circ \omega$.
This is a loop in $M_{d,n}$ that starts and ends at $m(\p)$.
The projection of $\sigma$
onto the coordinate of $\CC^\edgecard$ corresponding
to the edge $\{1,2\}$, defined by forgetting all other 
coordinates, 
winds around the origin
exactly once (any loops due to $\omega$ cancel out),
while the other coordinate projections are simply connected in 
$\CC^{\times}$ (any loops due to $\omega$ cancel out). Thus, fixing the point $\bl$ in $\LN$,
from Lemma~\ref{lem:z2-cover},
$\sigma$ must lift to a path $\tilde{\sigma}$ that ends at 
$\bl'$. Moreover, this path stays in the good locus.
\end{proof}

\begin{lemma}
\label{lem:Lcon}
For $d \ge 2$, $\good(L_{d,n})$ is path-connected.
\end{lemma}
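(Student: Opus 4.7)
The plan is to reduce the path-connectedness of $\good(L_{d,n})$ to two ingredients already in hand: path-connectedness of $\good(M_{d,n})$ from Lemma~\ref{lem:Mcon}, and the coordinate-negation paths produced by Lemma~\ref{lem:Lneg}. The rough idea is ``lift a path downstairs, then fix up the fiber discrepancy one coordinate at a time.''

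More concretely, given two good points $\bl_1,\bl_2 \in \good(L_{d,n})$, I would first set $\y_i := s(\bl_i)$. By definition of $\good(L_{d,n})$, both $\y_1,\y_2$ lie in $\good(M_{d,n})$, so Lemma~\ref{lem:Mcon} yields a path $\tau$ from $\y_1$ to $\y_2$ staying inside $\good(M_{d,n})$. Away from the zero locus $Z$, the squaring map $s$ restricts to a genuine covering map $(\CC^{\times})^{N}\to(\CC^{\times})^{N}$, so $\tau$ admits a unique lift $\tilde\tau$ starting at $\bl_1$. Since $s(\tilde\tau(t)) = \tau(t) \in \good(M_{d,n})$ for all $t$, the lift $\tilde\tau$ automatically remains in $\good(L_{d,n})$ by the definition of the bad locus upstairs as the $s$-preimage of the bad locus downstairs. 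Let $\bl_2' := \tilde\tau(1)$; it lies in the fiber $s^{-1}(\y_2)$.

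Now $\bl_2'$ and $\bl_2$ both lie in $s^{-1}(\y_2)$, so they agree up to negation on some subset $S\subseteq \{1,\dots,N\}$ of coordinates. Working through $S$ one coordinate at a time, I would apply Lemma~\ref{lem:Lneg} repeatedly: each application produces a path in $\good(L_{d,n})$ connecting the current point to the point obtained by negating one more coordinate in $S$. Concatenating all such paths yields a path from $\bl_2'$ to $\bl_2$ inside $\good(L_{d,n})$. Prepending $\tilde\tau$ gives the desired path from $\bl_1$ to $\bl_2$.

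The only thing to double check is that Lemma~\ref{lem:Lneg} is stated for an arbitrary pair of good points differing on one coordinate, with a prescribed coordinate to negate; reading that lemma, its proof genuinely produces, for any edge and any good starting point $\bl$, a good-locus path to the point with exactly that coordinate negated. So the iterative fix-up is legitimate, and this is where the hypothesis $d\ge 2$ enters (via Lemma~\ref{lem:Lneg}'s gadget, which fails in dimension one). Nothing here is obstructed; the whole argument is essentially a bookkeeping exercise once Lemmas~\ref{lem:Mcon} and~\ref{lem:Lneg} are in place.
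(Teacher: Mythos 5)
Your proof is correct and follows essentially the same approach as the paper: lift a good path in $M_{d,n}$ via the covering map to $L_{d,n}$, then repeatedly apply Lemma~\ref{lem:Lneg} to correct the sign discrepancies in the fiber. The only difference is that you spell out somewhat more explicitly why the lift stays in the good locus (namely, that $\good(L_{d,n}) = s^{-1}(\good(M_{d,n}))$), which the paper leaves implicit.
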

\begin{proof}
Let $\bl_1$ and $\bl_2$ be two good points in $\good(L_{d,n})$.
Define $\bm_i := s(\bl_i)$. 
Let $\tau$ be a path 
in $M_{d,n}$
from 
$\bm_1$ to $\bm_2$ that avoids the singular set of $M_{d,n}$,
and such that no coordinate ever vanishes
(as guaranteed by~\ref{lem:Mcon}).
Fixing $\bl_1$, the path $\tau$
lifts to a path $\tilde{\tau}$ in $\LN$
that remains in the good locus 
and that connects $\bl_1$ to some point  $\bl_2'$ 
in the fiber 
$s^{-1}(s(\bl_2))$. 
The only remaining issue is that
$\bl_2'$ may have some of its coordinates negated from 
our desired target point $\bl_2$.
This can be solved by repeatedly applying the good negating paths
guaranteed by Lemma~\ref{lem:Lneg}.
\end{proof}

We now move on to the technical matters of smoothness.

\begin{lemma}
\label{lem:Asmooth}
Every point $\bl \in \good(\LN)$ is 
smooth
and with 
$\Dim_\bl(\LN)=dn-C$.
Every point in $\bad(\LN)-Z$ is singular.
\end{lemma}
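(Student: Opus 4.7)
My strategy would be to exploit the fact that the squaring map $s : \CC^N \to \CC^N$ is étale away from the zero locus $Z$: its Jacobian is the diagonal matrix $2\,\mathrm{diag}(l_1, \ldots, l_N)$, which is invertible exactly when $\bl \notin Z$. Since $\LN = s^{-1}(M_{d,n})$, this will imply that $s$ restricts to a local analytic isomorphism from a neighborhood of any $\bl \notin Z$ in $\LN$ onto a neighborhood of $s(\bl)$ in $M_{d,n}$. The upshot will be that $\bl$ is smooth on $\LN$ iff $s(\bl)$ is smooth on $M_{d,n}$, with matching local dimensions. Concretely, I would write $M_{d,n}$ locally as the zero set of some $f_1, \ldots, f_k$, so that $\LN$ is cut out locally by $g_i := f_i \circ s$, and then invoke the chain rule $J_g(\bl) = J_f(s(\bl)) \cdot 2\,\mathrm{diag}(\bl)$: off $Z$ the second factor is invertible, so the Jacobian ranks match, and hence so do the tangent-space codimensions.

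The first claim will then be immediate: if $\bl \in \good(\LN)$, then $\bl \notin Z$ and $s(\bl) \in \good(M_{d,n})$, so by Theorem~\ref{thm:Mvariety} the point $s(\bl)$ is smooth on $M_{d,n}$ with $\Dim_{s(\bl)}(M_{d,n}) = dn-C$, and the local isomorphism carries both facts to $\bl$. For the second, I would take $\bl \in \bad(\LN) \setminus Z$, note that $s(\bl) \in \bad(M_{d,n}) = \sing(M_{d,n}) \cup (M_{d,n} \cap Z)$, and rule out the second possibility: $s(\bl) \in Z$ would force $\bl \in Z$, contradicting $\bl \notin Z$. Hence $s(\bl) \in \sing(M_{d,n})$, and the local isomorphism transfers singularity back to $\bl$.

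The only subtlety I anticipate is making sure that $\LN$ carries the scheme-theoretic (or reduced analytic) structure cut out by the pullback ideal, so that the Jacobian criterion applies directly rather than via some non-reduced thickening. This is automatic on $\CC^N \setminus Z$ because the squaring map is étale there and étale morphisms preserve reducedness; every claim is made away from $Z$, so no further care is needed. I expect no genuine obstacle beyond this bookkeeping.
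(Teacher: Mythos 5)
Your proposal is correct and matches the approach the paper itself notes in the remark following Lemma~\ref{lem:Asmooth}: working scheme-theoretically via the Jacobian criterion and the observation that $s$ is \'etale off $Z$ (so pulling back the reduced ideal of $M_{d,n}$ gives the reduced ideal of $\LN$, with matching Jacobian ranks by the chain rule). The paper's main written proof instead passes through the analytic category: it uses Theorem~\ref{thm:smpt} to convert algebraic smoothness of the good locus of $M_{d,n}$ into analytic smoothness, applies the inverse function theorem to $s$ at points off $Z$ (where the diagonal Jacobian is invertible) to transfer analytic smoothness and local dimension to $\LN$, and then applies Theorem~\ref{thm:smpt} again to get back to algebraic smoothness (and similarly transfers analytic non-smoothness to deduce the singularity claim). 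Both arguments hinge on exactly the same fact --- invertibility of $J_s(\bl)$ when $\bl\notin Z$ --- and are interchangeable; the analytic route avoids appealing to \'etaleness and preservation of reducedness, at the cost of invoking the GAGA-type comparison in Theorem~\ref{thm:smpt}, while your route keeps everything algebraic but needs the reducedness bookkeeping you correctly flag. Either way the logic is sound, and your handling of the two claims (good points, and bad points off $Z$) mirrors the paper exactly.
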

\begin{proof}
Every good point  in $M_{d,n}$ is
(algebraically) smooth, and thus, from Theorem~\ref{thm:smpt}, is 
analytically smooth 
of dimension $dn-C$. 
Also, from Theorem~\ref{thm:smpt}, 
every singular point in $M_{d,n}$ is not analytically smooth.

The differential $\ud s$ of the squaring map $s$ on $\CC^\edgecard$ is
represented by an $\edgecard\times \edgecard$ Jacobian matrix $\J$ at
each point in $\CC^\edgecard$. At points in $\CC^\edgecard$ where none
of the coordinates vanish, $\J$ is invertible.
Thus, from the inverse function theorem,  every good point in $\LN$ is
analytically smooth 
of dimension $dn-C$. 
Also every bad point in $\LN-Z$ is not analytically smooth.

Again using Theorem~\ref{thm:smpt}, we have each good point 
(algebraically) smooth
and with 
$\Dim_\bl(\LN)=dn-C$.
Similarly, we also have that 
every bad point in $\LN-Z$ is singular.
\end{proof}
Note that there may be some bad points of $\LN$ in $Z$ that are still smooth.

\begin{remark}
The above lemma can be proven directly using more machinery from algebraic geometry.
In particular, away from $Z$, the squaring map from $\CC^N$ to itself is an 
``\'etale morphism''~\cite[page 18]{milne-etale}. 
This property transfers
to the map $s(\cdot)$ acting on $\LN-Z$,
as this property transfers under a ``base change''.
 The results then follows immediately.
\end{remark}

\begin{lemma}
\label{lem:closeG}
The Zariski closure of $\good(L_{d,n})$ is $\LN$.
\end{lemma}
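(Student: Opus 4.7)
My plan is to first verify Zariski density of $\good(M_{d,n})$ inside $M_{d,n}$, and then transfer that density to $\LN$ via the finite surjective restriction $s\colon \LN \to M_{d,n}$ of the squaring map. The transfer works component by component: I will argue that every irreducible component of $\LN$ surjects onto $M_{d,n}$, so each component meets $\good(\LN)$ in a non-empty open set.

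\textbf{Density downstairs.} By Theorem~\ref{thm:Mvariety}, $M_{d,n}$ is irreducible, so $\sing(M_{d,n})$ is a proper Zariski-closed subset. The set $M_{d,n}\cap Z$ is also proper and closed in $M_{d,n}$, since a generic configuration in $\CC^d$ has all pairwise squared distances non-vanishing. Hence $\bad(M_{d,n})$ is a proper closed subset of $M_{d,n}$, and its complement $\good(M_{d,n})$ is Zariski-open and dense.

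\textbf{Transfer upstairs.} The squaring map $s\colon\CC^N\to\CC^N$ is a finite morphism of degree $2^N$, and its restriction $s\colon\LN\to M_{d,n}$ remains finite and is surjective; in particular $s$ is a closed map. Let $W$ denote the Zariski closure of $\good(L_{d,n})$ in $\LN$, and let $Y$ be any irreducible component of $\LN$. By the pure-dimension part of Theorem~\ref{thm:Lvariety}, $Y$ has dimension $dn-C$. Since $s$ is finite, $s(Y)$ is a closed irreducible subvariety of $M_{d,n}$ of the same dimension $dn-C$, and as $M_{d,n}$ is irreducible of this dimension, $s(Y)=M_{d,n}$. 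So $Y$ contains preimages of points in $\good(M_{d,n})$, and those preimages lie in $\good(\LN)$ by the definition $\good(\LN)=s^{-1}(\good(M_{d,n}))$. Hence $Y\cap \good(\LN)$ is a non-empty open subset of the irreducible $Y$, so it is Zariski-dense in $Y$, and $Y\subseteq W$. As every component is contained in $W$, we conclude $W=\LN$.

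\textbf{Main obstacle.} The only delicate point is avoiding circularity with Theorem~\ref{thm:Lvariety}: the pure-dimensionality of $\LN$ is invoked here, so it must be established before this lemma is used to prove the irreducibility half of that theorem. This can be settled independently by noting that $s\colon\CC^N\to\CC^N$ is flat (the target coordinate ring is a free module of rank $2^N$ over the source), so the base change $\LN = s^{-1}(M_{d,n})$ inherits the pure dimension $dn-C$ of $M_{d,n}$ without reliance on the present lemma.
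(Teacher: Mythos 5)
Your argument is correct, but it takes a genuinely different route from the paper's. The paper avoids all machinery about flatness and dimension of components: it shows $\good(\LN)$ is dense in $\LN$ in the \emph{standard} topology, by perturbing any $\bl\in\LN$ to a nearby good point. Concretely, given $\bl$ with $\bm := s(\bl)$, one perturbs $\bm$ to a good $\bm'\in M_{d,n}$ (possible since good points are classically dense downstairs), and then lifts $\bm'$ to a nearby $\bl'$ by choosing, coordinate-by-coordinate, a local continuous branch of the square root that tracks $\bl$; when a coordinate of $\bl$ is zero, either sign works. Standard-topology density then gives Zariski density by Theorem~\ref{thm:Zdense}. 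Your proof instead stays purely algebraic: you establish Zariski density of $\good(M_{d,n})$ downstairs from irreducibility, then push that upstairs component-by-component using that $s\colon\LN\to M_{d,n}$ is finite and that every component $Y$ of $\LN$ surjects onto $M_{d,n}$ by a dimension count. What the paper's approach buys is elementarity — it needs no appeal to flatness or dimension theory of finite morphisms. What your approach buys is that it is fully algebraic (no recourse to the classical topology on $\CC^N$), and it is arguably more structural: the statement ``good is dense'' is reduced to ``every component surjects downstairs''.

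You also correctly diagnosed the one real danger in your argument: in the paper, the pure-dimension statement of Theorem~\ref{thm:Lvariety} is proved in Lemma~\ref{lem:dim}, which explicitly cites the present Lemma~\ref{lem:closeG}, so you cannot simply invoke it here. Your fix via flatness of $s$ is sound: $\CC[x_1,\dots,x_N]$ is free of rank $2^N$ over $\CC[x_1^2,\dots,x_N^2]$ (basis the squarefree monomials), flatness is preserved under the base change to $M_{d,n}$, and a finite flat morphism onto an irreducible variety of dimension $dn-C$ has equidimensional source of the same dimension. One small slip: you wrote that the \emph{target} coordinate ring is a free module over the \emph{source}; flatness of $s$ is the other way around — the coordinate ring of the source, $\CC[x]$, is a free module over that of the target, $\CC[x^2]$. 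This is just a wording error and does not affect the argument. Alternatively, you could have avoided the dimension count altogether and used going-down for flat morphisms directly to see that each component of $\LN$ dominates $M_{d,n}$; that makes the independence from Lemma~\ref{lem:dim} even more transparent.
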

\begin{proof}
Recall the following principle:
Given any point $z$ in $\CC^\times$, we can always find a 
neighborhood $B$
of $z^2$,
so that there is a 
well defined, single valued, continuous 
square root function
from $B$ to $\CC$, with $\sqrt{z^2}=z$.

Returning to our setting,
let $\bl$ be any point in $\LN$, and
let $\bm:=s(\bl)$ be its image in 
$M_{d,n}$ under the coordinate squaring map.
The good points of $M_{d,n}$ are  dense
in $M_{d,n}$. (Letting
$\bm=m(\p)$ for some $\p$,
there is always a nearby 
configuration $\p'$ with a full span and no edge with vanishing
squared length. Moreover, the 
map $m(\cdot)$ is continuous.)
Thus we can always find an arbitrarily close
point $\bm'$ that is in $\good(M_{d,n})$.

Next we argue that we can find a point $\bl'$
such that $s(\bl')=\bm'$ (putting it in $\good(\LN)$)
with $\bl'$ is arbitrarily close to $\bl$. Given $\bm'$,
in order to determine $\bl'$ we need to select a ``sign'' for the 
square-root on each coordinate $ij$.
When $l_{ij} \neq 0$ then using the above principle, 
we can pick a sign so that $l'_{ij}$ is near to 
$l_{ij}$.
When $l_{ij}=0$ then we can use any sign to 
obtain an 
$l'_{ij}$ that is sufficiently close to $0$.

Since this can be done for each
$\bl$, then $\LN$ is in 
the standard-topology closure of $\good(L_{d,n})$.

Thus, from Theorem~\ref{thm:Zdense}, 
$\LN$ is in 
the Zariski
closure of $\good(L_{d,n})$.
Since $\LN$ itself is closed
and contains $\good(L_{d,n})$,
we are done.



\end{proof}

\begin{lemma}
\label{lem:dim} 
Every component of 
$\LN$ is of  dimension equal to 
$dn-C$.
\end{lemma}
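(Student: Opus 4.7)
The plan is to combine Lemma~\ref{lem:Asmooth} with Lemma~\ref{lem:closeG} and the standard fact that a smooth point of a variety lies on a unique irreducible component. Lemma~\ref{lem:Asmooth} already tells us that every good point is smooth with $\Dim_\bl(L_{d,n}) = dn - C$, so the only new ingredient I need is that every irreducible component of $L_{d,n}$ meets $\good(L_{d,n})$.

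The first step is to fix an arbitrary irreducible component $V$ and write the decomposition $L_{d,n} = V \cup V_1 \cup \cdots \cup V_k$ into its remaining irreducible components. I would argue by contradiction: if $V \cap \good(L_{d,n}) = \emptyset$, then $\good(L_{d,n}) \subseteq V_1 \cup \cdots \cup V_k$. Since each $V_j$ is closed and a finite union of closed sets is closed, taking Zariski closures and applying Lemma~\ref{lem:closeG} yields $L_{d,n} \subseteq V_1 \cup \cdots \cup V_k$. Restricting to $V$ and using irreducibility then forces $V \subseteq V_j$ for some $j$, contradicting maximality of $V$ as an irreducible component.

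The second step is to pick any good point $\bl \in V$. By Lemma~\ref{lem:Asmooth}, $\bl$ is a smooth point of $L_{d,n}$ with $\Dim_\bl(L_{d,n}) = dn - C$. Since a smooth point lies on a unique irreducible component (so the local dimension there coincides with the dimension of that component), $V$ is that component and $\dim V = dn - C$, which is what we wanted.

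There is no real obstacle: the density of the good locus, the smoothness at good points, and the local dimension computation are all in place from the preceding lemmas, and the remaining content is essentially bookkeeping — translating between local algebraic smoothness and the global irreducible-component decomposition. The one place requiring a bit of care is the contradiction argument in the first step, where one must invoke both the closedness of a finite union of irreducible components and the irreducibility of $V$ to rule out $V$ being absorbed into another component.
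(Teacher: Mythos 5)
Your proof is correct and follows essentially the same approach as the paper's: use Lemma~\ref{lem:Asmooth} to get smoothness and local dimension $dn-C$ at good points, and use Lemma~\ref{lem:closeG} to argue that every irreducible component must meet the good locus. The paper states this more tersely ("the good locus is covered by a set of components ... all of dimension $dn-C$" and "no new components need to be added during the Zariski closure"), while you spell out the contradiction argument and the invocation of Theorem~\ref{thm:conIrr} (a smooth point lies on a unique component) explicitly, but the underlying reasoning is the same.
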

\begin{proof}
From Lemma~\ref{lem:Asmooth}
each good point has a local dimension of  $dn-C$.
Thus, the good locus is covered by a set of components of $\LN$, all of dimension 
$dn-C$.
The Zariski closure of $\good(L_{d,n})$ is $\LN$ (Lemma~\ref{lem:closeG}).
Thus, no new components need to be added during the Zariski closure.
\end{proof}

We can now prove irreducibility.

\begin{proposition}
\label{prop:irr}
For $d\geq2$,
$\LN$ is irreducible.
\end{proposition}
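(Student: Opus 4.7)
The plan is to apply Theorem~\ref{thm:conIrr} -- a variety whose smooth locus is connected is irreducible -- using $\good(L_{d,n})$ as the bridge. All the necessary ingredients are already in hand from the preceding lemmas; the present proposition is essentially a bookkeeping step that packages them.

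First observe that $\good(L_{d,n})$ is Zariski-open in $L_{d,n}$, because its complement is the preimage under the regular map $s$ of the Zariski-closed set $\sing(M_{d,n}) \cup Z$. By Lemma~\ref{lem:Asmooth}, every point of $\good(L_{d,n})$ is smooth, so $\good(L_{d,n})$ is contained in the smooth locus of $L_{d,n}$. By Lemma~\ref{lem:closeG}, the Zariski closure of $\good(L_{d,n})$ is all of $L_{d,n}$, so $\good(L_{d,n})$ is Zariski-dense; hence the smooth locus is Zariski-dense as well. By Lemma~\ref{lem:Lcon}, for $d \geq 2$, the set $\good(L_{d,n})$ is path-connected in the analytic topology.

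To conclude via Theorem~\ref{thm:conIrr}, I would argue that the full smooth locus is connected. Decompose it into its analytic connected components. Each component is open in $L_{d,n}$, and since the dense open set $\good(L_{d,n})$ meets every nonempty open subset of $L_{d,n}$, each component intersects $\good(L_{d,n})$. But $\good(L_{d,n})$ is itself connected and therefore lies in one single component, which forces the decomposition to be trivial. Theorem~\ref{thm:conIrr} then yields irreducibility of $L_{d,n}$.

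The main obstacle has already been overcome in Lemma~\ref{lem:Lneg}: the gadget configuration $\q$ requires a second coordinate axis in order to realize a loop in $M_{d,n}$ that winds once about the origin in exactly one prescribed edge-coordinate while staying away from the origin on the others. This is precisely where $d \geq 2$ is used, and it is exactly what fails in dimension one (consistent with the stated reducibility of $L_{1,3}$). Given Lemma~\ref{lem:Lneg} and Lemma~\ref{lem:Lcon}, the remaining argument above is short and routine.
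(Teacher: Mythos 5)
Your proposal uses the same ingredients as the paper (Lemmas~\ref{lem:Asmooth}, \ref{lem:Lcon}, \ref{lem:closeG}, and Theorem~\ref{thm:conIrr}) and follows the same overall strategy, but it routes the final step through a slightly different claim: you argue that the \emph{entire smooth locus} of $L_{d,n}$ is connected, whereas the paper argues directly that $\good(L_{d,n})$ lies in a single irreducible component. This detour introduces a step that needs more care than you give it. You assert that ``the dense open set $\good(L_{d,n})$ meets every nonempty open subset of $L_{d,n}$,'' citing Lemma~\ref{lem:closeG} for density. But Lemma~\ref{lem:closeG} as stated gives only that the \emph{Zariski} closure of $\good(L_{d,n})$ is $L_{d,n}$, and Zariski density of a set does not, in general, imply density in the analytic topology (consider an open unit disc in $\CC$). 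For the argument to go through you must either (a) observe that a Zariski-dense Zariski-open subset of a complex variety is analytically dense -- a true but not entirely trivial fact, since it uses that a proper subvariety of an irreducible variety has empty analytic interior -- or (b) reach into the \emph{proof} of Lemma~\ref{lem:closeG}, which does establish standard-topology density on the way to Zariski density, but that is not what the lemma statement gives you. Similarly, your opening gloss of Theorem~\ref{thm:conIrr} as ``a variety whose smooth locus is connected is irreducible'' paraphrases the paper's informal prose rather than the theorem as stated (a smooth point lies in a unique component), so the final step from ``smooth locus connected'' to ``irreducible'' requires a small additional argument that each component contains a smooth point of $L_{d,n}$.

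The paper sidesteps all of this. It fixes a component $V$ meeting $\good(L_{d,n})$, shows that $G := \good(L_{d,n}) \cap V$ and $H := \good(L_{d,n}) \cap W$ (with $W$ the union of the remaining components) are each closed in $\good(L_{d,n})$, are disjoint by Theorem~\ref{thm:conIrr}, and cover $\good(L_{d,n})$; connectivity then forces $H = \emptyset$, so $\good(L_{d,n}) \subseteq V$, and Lemma~\ref{lem:closeG} (used only for Zariski closure, which is all it actually states) gives $V = L_{d,n}$. This avoids any appeal to analytic density and never introduces the auxiliary object of the full smooth locus. Your argument is salvageable but would be cleaner if rearranged to track only $\good(L_{d,n})$ rather than the whole smooth locus.
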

\begin{proof}
From  Lemma~\ref{lem:Asmooth}, all of the points in $\good(L_{d,n})$ are smooth. 
From Lemma~\ref{lem:Lcon}, $\good(L_{d,n})$ is path-connected,
and thus connected as a subspace of  $\CC^n$.

Now we show that all of $\good(L_{d,n})$
lies in a single irreducible component $V$ of 
$\LN$.  Fix an irreducible component $V$, such that $G = \good(L_{d,n})\cap V$
is non-empty.  Notice that $G$ is a closed subspace
of $\good(L_{d,n})$ (Theorem~\ref{thm:Zdense}). 
Now let $W$ be the union of 
all the remainining irreducible components 
of $L_{d,n}$.  By similar reasoning $H = W\cap \good(L_{d,n})$
is closed in $\good(L_{d,n})$.  

From Theorem~\ref{thm:conIrr}, $G$ and $H$ are disjoint.  
On the other hand, $V\cup W = L_{d,n}$, so $G\cup H = \good(L_{d,n})$.
Because $\good(L_{d,n})$ is connected and $G$ is closed and not empty, 
its complement $H$ must be empty to be closed.  Hence, 
$G = \good(L_{d,n})$.

To finish the proof, recall that Lemma \ref{lem:closeG} says 
that the Zariski closure of
$\good(L_{d,n})$ is $L_{d,n}$.  
This closure must be contained in any variety, such as $V$, that contains
$\good(L_{d,n})$.
Since we also have $V\subseteq L_{d,n}$, equality holds and 
we get irreducibility.
\end{proof}

And now we can complete the proof of our theorem:

\begin{proof}[Proof of Theorem~\ref{thm:Lvariety}]
$\LN$ can be seen to be a variety by pulling back the 
defining equations of the variety $M_{d,n}$ through $s$.
Dimension is  Lemma~\ref{lem:dim}.
Irreducibility is Proposition~\ref{prop:irr}.
\end{proof}

\section{Automorphisms of $M_{d,n}$}

\begin{definition}\label{def:linear-automorphism}
A \defn{linear automorphism} of a variety 
$V$ in $\CC^N$ is 
a non-singular linear transform on $\CC^N$
(that is, a non-singular $N\times N$ complex matrix $\A$) that bijectively
maps $V$ to itself.\footnote{In our setting, $V$ will always be a cone, so 
linear isomorphisms (as opposed to affine ones) are natural.}
\end{definition}

\begin{definition}\label{def:gen-perm}
An $N\times N$ matrix $\P$ is a
\defn{permutation} if each row and 
column has a single non-zero entry, and this entry
is $1$.
A matrix $\P'=\D\P$, 
where $\D$ is diagonal and invertible, is a 
\defn{generalized permutation}.
Each row and 
column has exactly one non-zero entry.  
A generalized permutation has \defn{uniform scale} 
if it is a scalar multiple of a permutation matrix.  
\end{definition}

\begin{definition}
A generalized
permutation acting on an edge set
is 
\defn{induced by a vertex relabeling}
when it has the same
non-zero pattern as an edge permutation that arises from a vertex relabeling.
\end{definition}

We  now present the following slight generalization  of
\cite[Lemma 2.4]{BK1}. Here we deal 
with generalized permutations instead of 
permutations, but the same proof applies.
\begin{theorem}[{\cite[Lemma 2.4]{BK1}}]\label{thm:bk-linear-s}
Suppose that $\A$ is a generalized permutation
that is a linear automorphism of $M_{d,n}$.  Then 
$\A$ is induced by a vertex relabeling.
\end{theorem}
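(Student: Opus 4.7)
The plan is to reduce to the Boutin--Kemper argument for honest permutations and check that the extra diagonal scaling does not break it. Write $\A = \D \P_\sigma$, where $\P_\sigma$ is the permutation matrix of an edge permutation $\sigma$ of $\binom{[n]}{2}$ and $\D$ is a nonsingular diagonal matrix whose entries are the edge-indexed scalars $c_e$. The goal is to show that $\sigma$ is induced by a permutation $\pi$ of $[n]$, i.e., $\sigma(\{i,j\}) = \{\pi(i), \pi(j)\}$; once we have this, the conclusion of the theorem follows directly from the definition of ``induced by a vertex relabeling''.

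The observation that enables reusing the BK argument is that $\D$ rescales coordinates but does not change the zero-pattern of any vector: for every $\bm \in \CC^N$, the support of $\A \bm$ is exactly $\sigma(\mathrm{supp}(\bm))$. The BK argument for permutations works by testing $\P_\sigma$ on degenerate configurations. Specifically, letting $\p_i = \p_j$ produces a point of $M_{d,n}$ in which the $\{i,j\}$-coordinate vanishes and $m_{ik} = m_{jk}$ for all $k \neq i,j$. Under $\A$, this family maps to a family in $M_{d,n}$ whose $\sigma(\{i,j\})$-coordinate vanishes. Writing $\sigma(\{i,j\}) = \{i',j'\}$, the image family must therefore come from configurations in which two points coincide, and the identification of which pair of vertices coincides is forced combinatorially by the support structure to be $\{i',j'\}$.

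The BK argument then uses the equalities $m_{ik} = m_{jk}$ to argue that $\sigma(\{i,k\})$ and $\sigma(\{j,k\})$ must share a common vertex for every $k \neq i,j$; iterating over the other vertices forces $\sigma$ to send the star $E_i = \{\{i,k\} : k \neq i\}$ to a star $E_{\pi(i)}$, which is the defining property of being induced by a vertex relabeling. In the generalized-permutation setting these equalities are replaced by proportionalities $m'_{\sigma(\{i,k\})} = (c_{\sigma(\{i,k\})}/c_{\sigma(\{j,k\})}) m'_{\sigma(\{j,k\})}$, but the combinatorial content BK extract is only which pairs of coordinates are linked by a nontrivial linear relation, not the scalar appearing in it.

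The main obstacle, and the only place where care is needed compared with the original proof, is therefore the bookkeeping of the nonzero proportionality constants $c_{\sigma(\{i,k\})}/c_{\sigma(\{j,k\})}$ through the degenerate-configuration analysis. Once one verifies that the support-based conclusions (which coordinates vanish and which pairs of coordinates are linearly tied) are unaffected by nonzero rescalings, the proof carries over from \cite[Lemma~2.4]{BK1} verbatim.
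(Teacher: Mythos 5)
Your proposal matches the paper's own treatment: the paper states the result as a ``slight generalization'' of Boutin--Kemper's Lemma~2.4 and simply asserts that ``the same proof applies,'' which is precisely what you argue, with the added (correct) observation that the diagonal factor preserves supports and therefore does not disturb the combinatorial core of the BK degenerate-configuration argument. No genuine difference in route; your write-up just spells out why the citation suffices.
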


The following material will help us slightly strengthen
Theorem~\ref{thm:bk-linear-s}, and will also be used
later in Section~\ref{sec:maps}.

First we define the 
combinatorial notion of infinitesimally dependent and independent
sets of edges in $d$ dimensions.

\begin{definition}
\label{def:maps}
Let $d$ be some fixed dimension and $n$ a number of vertices.
Let $E:= \{E_1,\ldots, E_k\}$ be  an ordered subset of the $N$  edges.
The ordering on the edges of $E$
fixes  an association between each edge in $E$ 
and a coordinate axis of $\CC^{k}$. 
Let $m_E(\p)$
be the map from $d$-dimensional 
configuration space to $\CC^{k}$
measuring the squared lengths of the edges
of $E$. 

We denote by $\pi_{\bar{E}}$
the linear map  from $\CC^N$
to $\CC^{k}$ 
that forgets the edges not in $E$, and is consistent with the 
ordering of $E$.
Specifically, we have an association between each edge of
$K_n$ and an index in $\{1,\dots,N\}$, and thus we can think of each 
$E_i$ as simply its index in $\{1,\dots,N\}$. Then,
$\pi_{\bar{E}}$ is defined by the conditions:
$\pi_{\bar{E}}(e_j) = 0$ when $j\in \bar{E}$
and
$\pi_{\bar{E}}(e_j) = e'_i$ when $E_i=j$,
where 
$\{e_1, \ldots, e_N\}$ denotes  the coordinate basis for $\CC^N$
and
$\{e'_1, \ldots, e'_k\}$ denotes  the coordinate basis for $\CC^k$.
We call $\pi_{\bar{E}}$ an \defn{edge forgetting map}.

With this notation, 
the map $m_E(\cdot)$ is simply the composition of the complex measurement map $m(\cdot)$ 
and $\pi_{\bar{E}}$.

\end{definition}

\begin{definition}
\label{def:ind}
We say the 
an edge set  $E$  is \defn{infinitesimally independent} in $d$
dimensions if there exists a  complex configuration 
$\p$ in $\CC^d$, where we can
\emph{differentially}
vary each of the $|E|$ 
squared lengths independently, by
appropriately differentially varying our  configuration $\p$. 

Formally, this means that the 
image 
of the differential of $m_E(\cdot)$ at 
$\p$ is $|E|$-dimensional.
This exactly
coincides with the notion of infinitesimal independence from graph 
rigidity theory~\cite{L70}.

We call such a configuration $\p$, \defn{$E$-regular}.
Every configuration in some appropriate  
neighborhood of an $E$-regular point is also
$E$-regular (by semi-continuity).
 This neighborhood must include configurations
with full affine spans and no coincident points

For any configuration $\p$ with full affine span, 
$m(\p)$ is smooth (Theorem~\ref{thm:Mvariety}).
Thus for any $E$-regular 
configuration $\p$, with full affine span, 
using the chain rule, the differential image of $\pi_{\bar{E}}$ at 
the point $m(\p)$ is $|E|$-dimensional.
We call such a point of $M_{d,n}$, \defn{$E$-regular}.
Such points must exist when $E$ is infinitesimally independent.

For any smooth point $\x$ of $M_{d,n}$ with 
no zero coordinates, all of its preimages under the squaring map, $s()$, are
smooth in $L_{d,n}$ (Lemma~\ref{lem:Asmooth}).
Thus for any preimage $\genericpoint$ of an $E$-regular point 
$\x$, with no zero coordinates, 
the differential image of $\pi_{\bar{E}}$ at 
the point $\genericpoint$ is $|E|$-dimensional.
(As the Jacobian of $s(\cdot)$ at $\bl$ 
is diagonal and
bijective). 
We call such a point of $L_{d,n}$, \defn{$E$-regular}.
Such points must exist when $E$ is infinitesimally independent.

An edge set that is 
not infinitesimally independent in $d$ dimensions is 
called \defn{infinitesimally dependent} in $d$ dimensions.
\end{definition}


The following is a standard result from rigidity theory
(see, e.g., \cite[Corollary 2.6.2]{GSS}).  
\begin{proposition}
\label{prop:simplex}
Let $E$ be an edge set (with all its edges
distinct).
Suppose $|E| \le \binom{d+2}{2}$ and $E$ is 
infinitesimally dependent
in $d$ dimensions.
Then $|E| = \binom{d+2}{2}$ and
$E$ consists of the edges of a  $K_{d+2}$ subgraph (in some order).
\end{proposition}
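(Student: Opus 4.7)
The plan is to invoke the structure of the $d$-dimensional generic rigidity matroid, whose independent edge sets are exactly the infinitesimally independent ones of Definition~\ref{def:ind}. Let $k := |V(E)|$ denote the number of distinct vertices incident to some edge of $E$. The key quantitative input is the rank bound $\rank(E') \leq dk' - \binom{d+1}{2}$ for any edge set $E'$ spanning $k' \geq d+1$ vertices: a complex configuration on $k'$ vertices lives in $\CC^{dk'}$, and the measurement map is invariant under the $\binom{d+1}{2}$-dimensional complex congruence group whenever the configuration has full affine span. For $k' \leq d+1$, a simplex in affinely generic position realizes the $\binom{k'}{2}$ squared edge lengths as algebraically independent coordinates, so every subset of $E(K_{k'})$ is independent; together with the hypothesis that $E$ is dependent, this forces $k \geq d+2$.

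I would next show that $K_{d+2}$ is a \emph{circuit} (a minimal dependent set) of the rigidity matroid. Its rank equals $d(d+2) - \binom{d+1}{2} = \binom{d+2}{2} - 1$, so $K_{d+2}$ is dependent and the Jacobian of $m_{E(K_{d+2})}$ has a one-dimensional left null space at a generic simplex. The unique low-degree polynomial relation among the $\binom{d+2}{2}$ squared distances of a $(d+2)$-point simplex in $\CC^d$ is the Cayley--Menger determinant; each of its partial derivatives with respect to a squared distance is a principal sub-minor that is generically nonvanishing, so every coefficient of the linear row-dependency is nonzero. Deleting any single edge therefore restores full rank, so $K_{d+2}$ is indeed a circuit of size exactly $\binom{d+2}{2}$. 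With this, any dependent $E$ with $|E| \leq \binom{d+2}{2}$ must contain some circuit $C \subseteq E$ of size $|C| \leq \binom{d+2}{2}$; once we know that no circuit is smaller than $K_{d+2}$, we conclude $|E| = |C| = \binom{d+2}{2}$ and hence $E = C$ is the edge set of a $K_{d+2}$ subgraph.

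The delicate step is the lower bound $|C| \geq \binom{d+2}{2}$ for every circuit $C$, needed to rule out small circuits on more than $d+2$ vertices. The cleanest route is to show that every vertex of a circuit has degree at least $d+1$: if some vertex $v \in V(C)$ had degree at most $d$, then deleting the edges of $C$ incident to $v$ would yield a proper subset of $C$ (hence independent by circuit minimality) whose edge count $|C| - \deg(v)$ nevertheless exceeds the rank bound $d(k-1) - \binom{d+1}{2}$ on the smaller vertex set, a contradiction. Given the minimum-degree bound, $|C| \geq (d+1)k/2 \geq (d+1)(d+2)/2 = \binom{d+2}{2}$ since $k \geq d+2$, completing the argument. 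This minimum-degree statement, and the full proposition, is precisely the content of Corollary~2.6.2 of~\cite{GSS}, which may alternatively be cited directly.
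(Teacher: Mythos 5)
Your route goes through the circuit theory of the $d$-dimensional generic rigidity matroid, and the supporting pieces (the rank formula, the reduction to $k\ge d+2$ vertices, the Cayley--Menger computation that $K_{d+2}$ is a circuit) are all sound, if somewhat more elaborate than what is needed. The paper's own sketch is more direct: take an inclusion-minimal dependent $E$, observe that unless it is $K_{d+2}$ it has a vertex $v$ of degree $\le d$, and argue that removing the edges at $v$ leaves a strictly smaller set that is still dependent, contradicting minimality. Your argument needs the same underlying fact about low-degree vertices, but the justification you give for it has a genuine gap.

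In your minimum-degree step you claim that if a circuit $C$ on $k$ vertices has a vertex $v$ with $\deg_C(v)\le d$, then $C':=C\setminus\{\text{edges at }v\}$ satisfies $|C'|=|C|-\deg_C(v)>d(k-1)-\binom{d+1}{2}$. Nothing you have established implies this. From circuit-ness you know only $\rank(C)=|C|-1\le dk-\binom{d+1}{2}$, i.e.\ the \emph{upper} bound $|C|\le dk-\binom{d+1}{2}+1$; the inequality you want would force $|C|\ge dk-\binom{d+1}{2}+1$, which is not known a priori (it is essentially what you are trying to prove) and fails outright for $k>d+2$, where $d(k-1)-\binom{d+1}{2}\ge\binom{d+2}{2}-1\ge|C|-\deg_C(v)$, so the edge count cannot exceed the rank bound. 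The fact actually needed is the rank additivity the paper invokes: at a generic configuration, $\p_v$ lies outside the affine span of its $\le d$ neighbors, so the edges at $v$ contribute full rank independently of the rest, giving $\rank(C)=\rank(C')+\deg_C(v)$. Then $|C|-1=\rank(C')+\deg_C(v)=|C'|+\deg_C(v)=|C|$, an immediate contradiction. Your fallback of simply citing Corollary~2.6.2 of~\cite{GSS} is fine and is what the paper does.
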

\begin{proof}[Proof Sketch]
Assume, w.l.o.g., that $E$ is infinitesimally
dependent and inclusion-wise minimal
with this property.  
If $E$ does not consist of the edges of a $K_{d+2}$ subgraph, then it has a vertex $v$ 
of degree at most $d$.  
Let $\p$ be in general affine position.  This means, in particular, that $\p_v$ is
not in the affine span of its neighbors.  Hence, the $\le d$ squared lengths of each edge in 
edge set $E'$ incident on $v$ can be 
differentially varied independently (by exercising the $d$ degrees of freedom in $\p_v$).
Thus the edges of $E'$ can be removed from $E$ leaving the remainder,
$E\setminus E'$, still infinitesimally
dependent. This  contradicts the assumed minimality of $E$.
\end{proof}

\begin{lemma}\label{lem:1d-Mdn}
Any linear  automorphism $\A$ of $M_{d,n}$ is 
a linear  automorphism of $M_{1,n}$.
\end{lemma}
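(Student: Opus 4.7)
The plan is a short downward induction on the dimension parameter, keyed on the observation that $\sing(M_{d,n}) = M_{d-1,n}$ as subsets of $\CC^N$. By Theorem~\ref{thm:Mvariety}, the singular locus of $M_{d,n}$ consists of the squared measurements of configurations whose affine span has dimension strictly less than $d$. Every such configuration is congruent, via a rigid motion (which preserves complex squared distances), to one whose points lie in a $(d-1)$-dimensional affine subspace of $\CC^d$. The image of $m(\cdot)$ on this locus is therefore exactly $M_{d-1,n}$, viewed as a subvariety of $M_{d,n}$ in the common ambient $\CC^N$. This is the only geometric input needed.

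Next, I would invoke the general principle that a biregular automorphism of a variety preserves its singular locus set-theoretically. Since $\A$ is linear and invertible, its restriction to $M_{d,n}$ is biregular, so $\A\bigl(\sing(M_{d,n})\bigr) = \sing(M_{d,n})$. Combined with the previous step, this gives $\A(M_{d-1,n}) = M_{d-1,n}$, i.e., $\A$ is a linear automorphism of $M_{d-1,n}$.

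Finally, I would iterate. The standing hypothesis $n \ge d+2$ implies $n \ge d'+2$ for every $1 \le d' \le d$, so Theorem~\ref{thm:Mvariety} and the preceding step apply with $d-1,\, d-2,\, \ldots,\, 2$ in place of $d$. After $d-1$ applications, we reach $\A(M_{1,n}) = M_{1,n}$, which is the desired conclusion.

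I do not anticipate any real obstacle: the argument is essentially a one-line consequence of the explicit description of $\sing(M_{d,n})$ in Theorem~\ref{thm:Mvariety} together with the fact that linear (hence biregular) maps preserve the smooth/singular stratification. The only thing worth double-checking is the set-theoretic identification $\sing(M_{d,n}) = M_{d-1,n}$ inside $\CC^N$, which is immediate from the fact that both sides are the image under $m(\cdot)$ of configurations in $\CC^{d-1} \subset \CC^d$.
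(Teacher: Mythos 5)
Your proposal is correct and follows essentially the same route as the paper: identify $\sing(M_{d,n})=M_{d-1,n}$ via Theorem~\ref{thm:Mvariety}, invoke Theorem~\ref{thm:Tmap} to conclude that a linear automorphism preserves the singular locus, and descend by induction to $d=1$ (using $n\ge d+2 \ge d'+2$). One small caution about your verification paragraph: over $\CC$ the orthogonal group does \emph{not} act transitively on $(d-1)$-dimensional affine subspaces (isotropic subspaces are a separate orbit), so ``every deficient-span configuration is congruent to one in $\CC^{d-1}$'' is not literally true; the correct justification for $\sing(M_{d,n})=M_{d-1,n}$ is the Gram-matrix/Takagi argument already embedded in the proof of Theorem~\ref{thm:Mvariety} (a rank $\le d-1$ complex symmetric Gram matrix factors through a configuration in $\CC^{d-1}$), which is what the paper implicitly relies on.
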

\begin{proof}
The singular set of $M_{d,n}$ is $M_{d,n-1}$ by Theorem 
\ref{thm:Mvariety}.  Thus, 
from Theorem~\ref{thm:Tmap}, $\A$ must be a linear 
automorphism of $M_{d-1,n}$.  We then see, by induction, 
that $\A$ is also a linear automorphism of $M_{1,n}$.
\end{proof}
In fact, this kind of induction has been recently used
to greatly 
strengthen Boutin and Kemper's 
unique reconstructability 
result~\cite{BK1} to apply
to a much larger class of graphs than just
the complete graphs~\cite{gugr}.

\begin{lemma}\label{lem:voldet-scale}
Let $m_{12}$, $m_{13}$ and $m_{23}$ be the squared
edge lengths of a $1$-dimensional triangle, and
suppose that $s_{12}$, $s_{13}$ and $s_{23}$
are scalars such that the simplicial 
volume determinant 
\[
\begin{split}
\det
\begin{pmatrix}
2m_{13}& (m_{13} + m_{23} - m_{12})\\
(m_{13} + m_{23} - m_{12})& 2m_{23}\\
\end{pmatrix} = & \\  & 2 (m_{12}m_{13} + m_{12}m_{23} + m_{13}m_{23})
- (m^2_{12} + m^2_{13} + m^2_{23})
\end{split}
\]
(see Section~\ref{sec:varieties}) is mapped to a 
multiple of itself under the scaling $m_{ij}\mapsto s_{ij}m_{ij}$.  Then the $s_{ij}$ are all equal.
\end{lemma}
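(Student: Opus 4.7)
The plan is to read the hypothesis as a polynomial identity. The simplicial volume determinant
\[
V(m_{12}, m_{13}, m_{23}) \;=\; 2(m_{12}m_{13} + m_{12}m_{23} + m_{13}m_{23}) - (m_{12}^2 + m_{13}^2 + m_{23}^2)
\]
is a homogeneous degree-two polynomial in three variables with six distinct monomials, so the hypothesis that it is mapped to a multiple of itself under $m_{ij} \mapsto s_{ij} m_{ij}$ gives a polynomial identity
\[
V(s_{12}m_{12},\, s_{13}m_{13},\, s_{23}m_{23}) \;=\; c \cdot V(m_{12}, m_{13}, m_{23})
\]
for some scalar $c$. (If one instead reads the hypothesis geometrically, as the coordinate scaling carrying $M_{1,3} = \{V = 0\}$ into itself, the same identity follows: $V$ is irreducible of degree $2$, so the Nullstellensatz forces $V$ to divide $V(s_{12}m_{12}, s_{13}m_{13}, s_{23}m_{23})$, and matching degrees the quotient is a constant.)

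Next I would match coefficients of the six monomials. Since the substitution sends $-m_{ij}^2 \mapsto -s_{ij}^2 m_{ij}^2$ and $2\, m_{ij} m_{kl} \mapsto 2\, s_{ij} s_{kl}\, m_{ij} m_{kl}$, the polynomial identity above reduces to the six equations
\[
s_{12}^2 \;=\; s_{13}^2 \;=\; s_{23}^2 \;=\; c,\qquad s_{12} s_{13} \;=\; s_{12} s_{23} \;=\; s_{13} s_{23} \;=\; c.
\]

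Finally, I would finish with a short case split on $c$. If $c = 0$ then all three $s_{ij}$ vanish and the conclusion is trivial. If $c \neq 0$ then every $s_{ij}$ is nonzero, and combining $s_{ij}^2 = c$ with $s_{ij} s_{kl} = c$ gives $s_{ij}(s_{ij} - s_{kl}) = 0$, so $s_{ij} = s_{kl}$ for every pair of edges; hence $s_{12} = s_{13} = s_{23}$. There is no real obstacle in this lemma: the argument is a direct coefficient comparison followed by an elementary algebraic manipulation, and the only point that requires care is the initial interpretive step that legitimately converts the hypothesis into the displayed polynomial identity.
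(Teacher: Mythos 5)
Your proof is correct and follows essentially the same coefficient-comparison argument as the paper; the paper arrives at $s_{12}^2 = s_{12}s_{13}$ by specializing $m_{23}=0$, whereas you match all six coefficients at once, a cosmetic difference. One small improvement on your side is the explicit handling of the degenerate case $c=0$, which the paper's cancellation step $s_{12}^2 = s_{12}s_{13} \Rightarrow s_{12} = s_{13}$ tacitly skips (harmlessly, since in the paper's application the $s_{ij}$ come from an invertible diagonal matrix and are nonzero).
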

\begin{proof}
The hypothesis means that the desired statement holds for any specialization 
of the $m_{ij}$.  Consider the case where $m_{23} = 0$.
The presence of the monomials $m_{12}^2$ and 
$m_{12}m_{13}$ then imply that $s_{12}^2 = s_{12}s_{13}$, that is, $s_{12} = s_{13}$.  Continuing the same way, 
we see that $s_{12} = s_{23}$. 
\end{proof}

Now we can state the following slight  strengthening of
Theorem~\ref{thm:bk-linear-s}.

\begin{theorem}\label{thm:bk-linear}
Suppose that $\A$ is a generalized permutation
that is a linear automorphism of $M_{d,n}$.  Then 
$\A$ is induced by a vertex relabeling and has 
uniform scale.
\end{theorem}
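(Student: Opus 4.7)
The plan is to reduce to a one-dimensional, single-triangle situation and then invoke the lemmas already proved in the section. By Theorem~\ref{thm:bk-linear-s}, the hypothesized $\A$ is already induced by a vertex relabeling, so we may write $\A = \D\P$ where $\P$ is a permutation matrix induced by a vertex relabeling of $K_n$ and $\D$ is a non-singular diagonal matrix. Since $\P$ is itself a linear automorphism of $M_{d,n}$, so is $\D = \A\P^{-1}$. The theorem therefore reduces to showing that every diagonal linear automorphism $\D$ of $M_{d,n}$ is a scalar multiple of the identity. By Lemma~\ref{lem:1d-Mdn}, such a $\D$ is also a linear automorphism of $M_{1,n}$, so I will work in the one-dimensional setting from here on.

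Next I would fix an arbitrary triangle of vertices $\{i,j,k\}$ in $K_n$ and consider the coordinate projection $\pi\colon\CC^N\to\CC^3$ onto the three edge coordinates $m_{ij}, m_{ik}, m_{jk}$. Given any $3$-point configuration on a line, extending it to an $n$-point configuration by placing all remaining points on top of $\p_i$ yields a configuration whose measurement vector lies in $M_{1,n}$ and projects under $\pi$ to the original $3$-point measurement vector. Hence $\pi(M_{1,n})$ contains every measurement arising from a $3$-point line configuration, so its Zariski closure is all of $M_{1,3}$; conversely $\pi(M_{1,n})\subseteq M_{1,3}$ follows from continuity of $\pi$ in the Zariski topology. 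Because $\D$ is diagonal, its restriction $\D_{ijk}:=\mathrm{diag}(s_{ij},s_{ik},s_{jk})$ to these three coordinates satisfies $\pi\circ\D = \D_{ijk}\circ\pi$. Chasing this identity through the fact that $M_{1,3}$ is the Zariski closure of $\pi(M_{1,n})$ formally implies that $\D_{ijk}$ is a linear automorphism of $M_{1,3}$.

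Since $M_{1,3}$ is the irreducible hypersurface cut out by the simplicial volume determinant $V$ of Section~\ref{sec:varieties}, any linear automorphism of it must pull $V$ back to a nonzero scalar multiple of itself. Lemma~\ref{lem:voldet-scale} then forces $s_{ij} = s_{ik} = s_{jk}$. Applying this argument to every triangle of $K_n$, and using the elementary observation that for $n\ge 3$ the triangles of $K_n$ connect any two edges via a chain of triangles pairwise sharing an edge (two edges meeting at a vertex lie in a single triangle; two disjoint edges $\{i,j\}$ and $\{k,l\}$ are joined through the triangles $\{i,j,k\}$ and $\{i,k,l\}$ sharing edge $\{i,k\}$), it follows that all diagonal entries of $\D$ are equal. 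Hence $\D$ is a uniform scaling and $\A$ has uniform scale, as required. The main substantive step is producing the triangle-wise automorphism of $M_{1,3}$ in the middle paragraph; once that is in hand, Lemma~\ref{lem:voldet-scale} and the triangle-connectivity of $K_n$ finish the argument.
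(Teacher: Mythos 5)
Your proof is correct and matches the paper's approach almost exactly: invoke Theorem~\ref{thm:bk-linear-s} to get the vertex relabeling, reduce to $M_{1,n}$ via Lemma~\ref{lem:1d-Mdn}, project onto a single triangle to produce a diagonal automorphism of $M_{1,3}$, apply Lemma~\ref{lem:voldet-scale}, and propagate across overlapping triangles. The only cosmetic differences are that you split off the diagonal factor $\D$ before the dimension reduction rather than after, and you spell out the triangle-connectivity step that the paper leaves implicit.
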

\begin{proof}
Theorem~\ref{thm:bk-linear-s}
tells us that 
$\A$ is induced by a vertex relabeling.
Next we need to prove uniform scale. From 
Lemma~\ref{lem:1d-Mdn}, we can look at 
$A$ as an automorphism of $M_{1,n}$

Let $\pi_{\bar{K}}$ be an edge forgetting map 
that ignores all of the edges in the complement
of an edge set $K$, consisting of the edges of a fixed
triangle. Under any ordering of the edges of $K$, we have 
$\pi_{\bar{K}} (M_{1,n}) = M_{1,3}$.
(which is cut out from $\CC^3$ by the simplicial 
volume determinant as in Lemma~\ref{lem:voldet-scale}).

We know that we can factor $\A$ into 
$\D\P$, where $\D$ is diagonal and 
$\P$ is a 
permutation induced by a vertex relabeling.
Since a vertex relabeling is a linear automorphism of $M_{1,n}$, then so too is $\D$.

Since $\D$ is diagonal, and $\pi_{\bar{K}}$ is an
edge forgetting map, then $\pi_{\bar{K}} \D = \D' \pi_{\bar{K}}$ 
for an appropriate $3\times 3$ diagonal scaling matrix $\D'$, 
 making 
$\D'$ an automorphism of $M_{1,3}$. So it has to 
send the simplicial volume determinant to a multiple of itself.
This is the situation of Lemma~\ref{lem:voldet-scale},
and we conclude that the scaling on each triangle is uniform.

That $\A$ has a uniform scale then follows from applying 
the above argument repeatedly to overlapping triangles
until we have determined the scale on every edge.
\end{proof}

\section{Linear maps from $L_{d,n}$ to $\CC^D$}
\label{sec:maps}

In this section, 
which forms the technical heart of this paper,
we will study how linear projections
act on $L_{d,n}$.

Let $d \ge 1$.
Recall that $D:= \binom{d+2}{2}$.
In this section,
$\E$ will be 
a $D\times \edgecard$ matrix
representing a rank $r$ linear map from $\LN$ to
$\CC^D$, where 
$r$ is some number $\le D$ . 
Our goal is to study linear maps where the 
dimension of the image is strictly less than $r$. 
In particular this will occur when $\E(\LN) = L_{d,d+2}$.

\begin{definition}
We say that $\E$ has 
$K_{d+2}$ support 
if it depends only on measurements supported over the $D$ 
edges corresponding to a $K_{d+2}$ subgraph of $K_n$. 
Specifically,  
all the columns of the matrix 
$\E$ are zero, except for at most $D$ of
them, and these non-zero columns index edges 
contained within a single $K_{d+2}$.
\end{definition}

The main result of this section is:
\begin{theorem}
\label{thm:linImage}
Let $\E$ be  a 
$D\times \edgecard$ matrix with rank $r$.
Suppose that the image $\E(L_{d,n})$,
a constructible set,  
is not of dimension $r$.
Then  $r=D$ and $\E$ has $K_{d+2}$ support.
\end{theorem}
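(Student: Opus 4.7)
The plan is to reduce the geometric statement about $\dim\E(L_{d,n})$ to a rigidity-theoretic question about the support of $\E$, and then apply Proposition~\ref{prop:simplex}. Let $E \subseteq \binom{[n]}{2}$ denote the set of edges indexing the non-zero columns of $\E$. In the notation of Definition~\ref{def:maps}, factor $\E = \E'\circ\pi_{\bar{E^c}}$, where $\E'$ is a $D\times|E|$ matrix of rank $r$ with no zero columns, so that $\E(L_{d,n}) = \E'(\pi_{\bar{E^c}}(L_{d,n}))$.

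The first step is to show that deficiency forces $E$ to be infinitesimally dependent. If instead $E$ were infinitesimally independent, then by Definition~\ref{def:ind} one can pick an $E$-regular point $\genericpoint \in \good(L_{d,n})$ at which the differential of $\pi_{\bar{E^c}}$ restricted to $T_\genericpoint L_{d,n}$ has full rank $|E|$. The irreducibility of $L_{d,n}$ (Theorem~\ref{thm:Lvariety}) then yields $\overline{\pi_{\bar{E^c}}(L_{d,n})} = \CC^{|E|}$, so composing with the linear surjection $\E'$ onto its $r$-dimensional image gives $\overline{\E(L_{d,n})} = \E'(\CC^{|E|})$ of dimension $r$, contradicting deficiency. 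Hence $E$ is infinitesimally dependent, and in particular contains a rigidity circuit.

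The next step identifies $E$ with the edges of a $K_{d+2}$. The case $|E| \le D$ follows directly from Proposition~\ref{prop:simplex}, which forces $|E| = D$ and $E = K_{d+2}$. The hard case $|E|>D$ is the main technical obstacle. I plan to handle it by reformulating deficiency in terms of stresses: at a generic $\p$, deficiency provides a non-zero vector $\phi = v^\top \E$ in the row span of $\E$ that annihilates $T_\genericpoint L_{d,n}$, and rescaling gives a non-zero stress $\omega$ of $(V(E),E)$ at $\p$ with $\mathrm{diag}(\bl)\,\omega$ lying in that fixed $r$-dimensional row span. If some rigidity circuit $F \subseteq E$ spans $|V(F)| > d+2$ vertices, a dimension count on the image of $\p \mapsto \PP(\mathrm{diag}(\bl)\,\omega) \in \PP(\CC^{|E|})$, whose dimension is $d|V(F)| - C > D-1 \ge r-1$, shows this image cannot lie in the projectivization of the row span of $\E$. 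If instead every circuit of $E$ is a $K_{d+2}$, a matroid-closure argument (the closure of a $K_{d+2}$ in the rigidity matroid is itself) together with the fact that independent edges carry zero stress forces any edge of $E$ outside a $K_{d+2}$ circuit to be annihilated by every stress, contradicting its presence in the support of $\E$. Carrying out this combinatorial bookkeeping while maintaining the rank-$r$ structure of $\E$ is the main work.

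Finally, once $E = K_{d+2}$, we have $\pi_{\bar{E^c}}(L_{d,n}) = L_{d,d+2}$, and the problem reduces to whether a $D\times D$ matrix $\E'$ of rank $r \le D$ can have $\dim \E'(L_{d,d+2}) < r$, where $L_{d,d+2} \subset \CC^D$ is an irreducible hypersurface of dimension $D-1$. A standard fiber-dimension argument shows $\dim \E'(L_{d,d+2}) = r$ unless $\ker \E'$ is a translation direction of $L_{d,d+2}$, i.e.\ unless $L_{d,d+2}$ is a cylinder in the $\ker \E'$ direction. The Cayley-Menger-style defining polynomial displayed in Section~\ref{sec:varieties} depends non-trivially on all $D$ coordinates, so $L_{d,d+2}$ is not a cylinder in any direction. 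Hence deficiency forces $r = D$, in which case $\E'$ is invertible and $\E'(L_{d,d+2})$ is a linearly transformed $(D-1)$-dimensional hypersurface, of dimension $D-1 < D = r$, completing the proof.
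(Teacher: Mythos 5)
Your overall strategy diverges from the paper's. The paper proves Proposition~\ref{prop:inf}, which handles the hypotheses $r<D$ and ``non-$K_{d+2}$ support'' in a single uniform linear-algebraic argument: pick a \emph{bad column basis} $\E_2$ (Lemma~\ref{lem:flips:bad-basis}), pass to an $E_2$-regular point, and then exploit the $2^N$-fold coordinate-negation symmetry of $L_{d,n}$ together with the determinant/sign-flip Lemma~\ref{lem:nonsing} to force a contradiction with $E_2$-regularity. You instead try a case split on the size and structure of the support $E$, reducing to rigidity-matroid facts. Your first step (deficiency forces $E$ infinitesimally dependent) is correct and essentially a restatement of the $E$-regular-point argument, and your final reduction to $L_{d,d+2}$ is reasonable in spirit. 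But the route has two genuine gaps.

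The serious gap is in your treatment of $|E|>D$, which you yourself flag as the ``main technical obstacle'' and then do not resolve. The dimension count you propose --- that the image of $\p\mapsto \PP(\mathrm{diag}(\bl)\,\omega)$ has dimension $d|V(F)|-C$ --- is asserted, not proved; you have computed the dimension of the source of this map, not its image, and there is no a priori reason the map should be generically finite. Worse, this case is precisely where the coordinate-negation symmetry of $L_{d,n}$ must be exploited: the remark following Theorem~\ref{thm:linImage} notes that the analogous statement for $M_{d,n}$ is false, so any proof must make essential use of the extra structure of $L_{d,n}$ relative to $M_{d,n}$. Your sketch touches this only implicitly through the factor $\mathrm{diag}(\bl)$ (whose entries carry sign choices), but the ``every circuit is a $K_{d+2}$'' sub-case and the ``combinatorial bookkeeping while maintaining the rank-$r$ structure'' are left entirely unexecuted. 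The paper's Lemma~\ref{lem:nonsing} is exactly the device that lets one sidestep this case analysis, and nothing in your sketch replaces it.

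A secondary gap is in the cylinder argument for $r<D$. You argue that $L_{d,d+2}$ is not a cylinder because the defining polynomial ``depends non-trivially on all $D$ coordinates,'' but this only rules out cylinders along coordinate directions. The kernel of $\E'$ can be an arbitrary subspace of $\CC^{D}$, and a polynomial depending on all coordinates can still be a cylinder in some oblique direction (e.g.\ $(x+y)^2-z$). You would need to show that $L_{d,d+2}$ has no linear translation symmetry in \emph{any} direction, which requires an additional argument (for instance, via non-degeneracy of an appropriate Hessian, or by directly checking the Cayley--Menger polynomial). This secondary gap is likely repairable, but the $|E|>D$ gap is not filled, and as written the proposal does not prove the theorem.
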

\begin{remark}
Theorem~\ref{thm:linImage} does not hold when 
$\LN$ is replaced by $M_{d,n}$. As described in the introduction, the linear automorphism group of
$\CS^{n-1}_d$ is quite large,  and thus provides 
automorphisms $\A$ of $M_{d,n}$ that have
dense support. Thus, even if some $\E$ has $K_{d+2}$ support
the composite map $\E\A$ would not, and it 
could still have a low-dimensional image. 
\end{remark}

The proof relies (crucially) on the more technical, linear-algebraic 
Proposition~\ref{prop:inf}, proved below.  The idea
leading to it is as follows.

If a
point $\bl$ is smooth in $L_{d,n}$ then so is 
any $\bl'$ obtained by negating various coordinates of $\bl$.
Thus, the collection of complex analytic 
tangent spaces to $L_{d,n}$,
$T_{\bl}L_{d,n}$,
at $\bl$ and its orbit under coordinate negations gives us 
an arrangement $\mathcal{T}$ 
of $2^N$ linear spaces (related through
coordinate negation).
Any $\E$ meeting the 
hypothesis of Theorem~\ref{thm:linImage} necessarily drops 
rank on every subspace in $\mathcal{T}$.  
This would not be possible if $\E$
or the collection of tangent spaces
$T_{\bl}L_{d,n}$,
were sufficiently general.  
On the other hand, we know that the geometry of our situation is 
 special enough that when $\E$ has rank $D$
and
 $K_{d+2}$ support, then $\E$ does drop rank on each of the $T_{\bl}L_{d_n}$.  
Proposition~\ref{prop:inf} asserts that
this is the \textit{only} possibility.
This proof relies on the negation-based
symmetry of $L_{d,n}$ and
on
the fact that $K_{d+2}$ is the only graph on $D$ or 
fewer edges that is  infinitesimally dependent
(Proposition~\ref{prop:simplex}). 

First we present the proof of Theorem~\ref{thm:linImage},
which effectively reduces our problem to the linear 
situation covered in Proposition~\ref{prop:inf}.

\begin{proof}[Proof of Theorem~\ref{thm:linImage}]
Clearly, the image of the map must be contained in an $r$-dimensional
linear space spanned by the columns of $\E$.
Suppose that either $r<D$, or $\E$ does not have $K_{d+2}$ support.
Then, from Proposition~\ref{prop:inf} below,
there must be a smooth point
$\bl'$
such that
$\Dim(\E(T_{\bl'}L_{d,n}))=r$. 
Then, from the Local Submersion 
Theorem for smooth maps~\cite[page 20]{gp},
the map must be locally surjective onto the 
$r$-dimensional linear space. Thus the image 
(a constructible set) cannot have smaller 
dimension.
\end{proof}

We are now ready to state the key technical result in this 
section.
\begin{proposition}\label{prop:inf}
Let $\E$ be  a 
$D\times \edgecard$ matrix with rank $r$.
Suppose that either $r<D$ or $\E$ does not have $K_{d+2}$ support.
Then 
there is a smooth 
point
$\bl' \in L_{d,n}$ with 
with  the property
that 
$\Dim(\E(T_{\bl'}L_{d,n})) =r$.
\end{proposition}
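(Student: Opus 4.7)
The plan is to leverage the sign-change symmetry of $\LN$: coordinate negations act as linear automorphisms, so from any smooth $\bl \in \good(\LN)$ we obtain an orbit of $2^N$ smooth points whose tangent spaces are the translates $\N \cdot V$, where $V := T_\bl \LN$ and $\N$ ranges over the diagonal sign matrices $\{\pm 1\}^N$. It suffices to exhibit an $\N$ with $\rank(\E|_{\N V}) = r$; then $\bl' := \N \bl$ is the desired smooth point.

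The first step is to reduce to a polynomial-nonvanishing question. Assemble a basis of $V$ as the rows of a $(\dim V) \times N$ matrix $B$ and write $\N = \mathrm{diag}(t_1, \ldots, t_N)$. Then $\rank(\E|_{\N V}) = \rank(B \N \E^\top)$, so I need some $r \times r$ minor of $B \N \E^\top$ to be non-zero. By Cauchy--Binet,
\[
\det\bigl( (B \N \E^\top)_{I,J} \bigr) \;=\; \sum_{\substack{K \subseteq [N] \\ |K|=r}} \Delta^B_{I,K}\,\Delta^\E_{J,K} \prod_{k \in K} t_k,
\]
a multilinear polynomial in $t$. Since the characters $t \mapsto \prod_{k \in K} t_k$ are linearly independent on $\{\pm 1\}^N$, any multilinear polynomial vanishing on every sign pattern must vanish identically. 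Thus the proposition reduces to finding a single $K \subseteq [N]$ of size $r$ with $\Delta^B_{I,K} \neq 0$ for some $I$ \emph{and} $\Delta^\E_{J,K} \neq 0$ for some $J$.

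The second step is to interpret the two minor conditions. The condition $\Delta^\E_{J,K} \neq 0$ for some $J$ says exactly that $K$ is a basis of the column matroid $\mathcal{M}_\E$ of $\E$, which in particular forces $K \subseteq F$, where $F$ denotes the set of indices on which $\E$ has non-zero columns. For the $B$-side, since $\bl \in \good(\LN)$ the Jacobian $\D_\bl$ of the squaring map at $\bl$ is diagonal and invertible and $V = \D_\bl^{-1} T_{m(\p)} M_{d,n}$ for any $\p$ with $m(\p) = s(\bl)$; because diagonals commute with coordinate projections, $\dim \pi_K V$ equals the image dimension of $\mathrm{d} m_K$ at $\p$. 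So $\Delta^B_{I,K} \neq 0$ for some $I$ amounts to $\p$ being $K$-regular, and such $\p$ (with a good lift $\bl$) exists precisely when $K$ is infinitesimally independent (Definition~\ref{def:ind}). The task becomes the purely combinatorial one of producing $K \subseteq F$ of size $r$ that is both a basis of $\mathcal{M}_\E$ and infinitesimally independent.

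The third step dispatches this using the hypothesis on $\E$. If $r < D$, any basis $K$ of $\mathcal{M}_\E$ has $|K| = r < D$ and is automatically infinitesimally independent by Proposition~\ref{prop:simplex}. If instead $r = D$ and $\E$ lacks $K_{d+2}$ support, I argue by matroid basis exchange: suppose every basis of $\mathcal{M}_\E$ were $E(G)$ for some $K_{d+2}$-subgraph $G$, fix such a basis $K_0 = E(G_0)$, and swap in any $e \in F \setminus K_0$ to get a basis $K_1 := K_0 \setminus \{f\} \cup \{e\}$; by assumption $K_1 = E(G_1)$ for some $K_{d+2}$-subgraph $G_1$, and $|E(G_0) \cap E(G_1)| = D-1$. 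But for $d \geq 1$ two distinct $K_{d+2}$-subgraphs share at most $\binom{d+1}{2} = D - d - 1 < D - 1$ edges, forcing $G_1 = G_0$ and so $K_1 = K_0$, a contradiction. Hence $F = K_0 \subseteq E(K_{d+2})$, contradicting the no-$K_{d+2}$-support hypothesis; so some basis $K$ of $\mathcal{M}_\E$ escapes this form and Proposition~\ref{prop:simplex} again supplies infinitesimal independence. The main obstacle is precisely this boundary case $r = D$: the hypothesis has to rule out exactly the configuration in which every column-basis of $\E$ is trapped inside a common $K_{d+2}$, and it is the quantitative edge-overlap bound for distinct $K_{d+2}$-subgraphs that makes the exchange argument close.
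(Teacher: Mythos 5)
Your proof is correct, and it takes a genuinely different route from the paper's. The paper works with an explicit block decomposition of $\E$ and $\T$ into a ``bad basis'' block and a complement block (Lemmas~\ref{lem:flips:bad-basis}--\ref{lem:flips:E1-E2-rows}), extracts square $r\times r$ submatrices $\E'_2,\T'_2$, and derives a contradiction by applying a bespoke lemma (Lemma~\ref{lem:nonsing}) asserting that if $\det(\S\X+\Y)=0$ for all sign flips $\S$ then $\det\Y=0$. Your approach skips the block extraction entirely: by Cauchy--Binet you write the relevant $r\times r$ minor of $B\N\E^\top$ as a multilinear polynomial in the sign variables $t$, whose coefficient of $\prod_{k\in K}t_k$ is $\Delta^B_{I,K}\Delta^\E_{J,K}$, and then invoke the completeness of the character basis on $\{\pm 1\}^N$ to conclude that a nonzero multilinear polynomial is nonzero at some sign pattern. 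This is the same Fourier-orthogonality idea that drives the paper's Lemma~\ref{lem:nonsing} (whose proof sums over the $2^r$ sign flips to isolate the constant term), but you apply it head-on rather than via a submatrix gadget, which makes the two-sided factorization --- an $\E$-side condition (column basis of the matroid) and a $\T$-side condition (infinitesimal independence of $K$) --- more transparent. The combinatorial kernel is equivalent to the paper's Lemma~\ref{lem:flips:bad-basis}: you use the quantitative edge-overlap bound $|E(G_0)\cap E(G_1)|\le\binom{d+1}{2}<D-1$ for distinct $K_{d+2}$-subgraphs inside a matroid exchange, whereas the paper observes more simply that swapping any edge into a $K_{d+2}$'s edge set introduces an extra vertex; both are fine. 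What your version buys is conceptual clarity about why the sign-flip symmetry is exactly enough, at the modest cost of invoking Cauchy--Binet and the $2^N$-dimensional character argument rather than the paper's more elementary row-selection and $2^r$-fold determinant sum.
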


\subsection{Proof of Proposition~\ref{prop:inf}}
The rest of the section is occupied with the proof, 
which we break down into steps.
We use a technical lemma about coordinate 
negation and determinants that is relegated to it own Section~\ref{sec:det}.

\begin{definition}
\label{def:sf}
A \defn{sign flip matrix} $\S$ is a diagonal matrix 
with $\pm 1$ on the diagonal.  A \defn{coordinate flip}
of a point or subspace it its image under a sign 
flip matrix.
\end{definition}

\begin{definition}
\label{def:tans}
Let $\bm$ be a smooth  point in $M_{d,n}$, 
and $T_{\bm}M_{d,n}$ be its complex analytic  tangent.
We can describe 
$T_{\bm}M_{d,n}$ by a
$(dn-C)\times \edgecard$
complex matrix $\T_\bm$. (The row ordering is not relevant).

Referring back to Definition~\ref{def:ind}, 
if $E$ is an infinitesimally independent edge set, then
the columns of $\T_\bm$ 
corresponding to $E$,
at an $E$-regular point of $M_{d,n}$,
are linearly independent.
The same is true 
of the matrix $\T_\bl$ that expresses
the tangent space 
$T_{\bl}L_{d,n}$ at an $E$-regular  point $\bl$ of 
$L_{d,n}$. 
Such points must exist when $E$ is infinitesimally independent.
\end{definition}

The first step is to restrict to an interesting range
of $n$.
\begin{lemma}\label{lem:flips:n-D-range}
Proposition~\ref{prop:inf} holds when $n<d+2$.
\end{lemma}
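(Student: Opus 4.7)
The plan is to treat this as a degenerate base case, relying on the fact that when $n \le d+1$ the variety $L_{d,n}$ fills its ambient space. The strategy will be to cite the known equality $M_{d,n} = \CC^N$ for $n \le d+1$, push it through the squaring map to conclude $L_{d,n} = \CC^N$, and then observe that the tangent space at every point is all of $\CC^N$, so $\E(T_{\bl'}L_{d,n})$ is simply the column span of $\E$.

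In detail, the first step will be to invoke the observation made just after Theorem~\ref{thm:Mvariety} that $M_{d,n} = \CC^N$ whenever $n \le d+1$. Since $L_{d,n} = s^{-1}(M_{d,n})$ and the squaring map $s\colon \CC^N \to \CC^N$ is surjective, this gives $L_{d,n} = \CC^N$. Consequently, every point $\bl' \in L_{d,n}$ is automatically smooth, and its analytic tangent space is the full ambient $\CC^N$. Therefore $\E(T_{\bl'}L_{d,n}) = \E(\CC^N)$, which has dimension equal to $\rank(\E) = r$ by the definition of rank. This is exactly the conclusion of Proposition~\ref{prop:inf}, and it in fact holds at \emph{every} point of $L_{d,n}$, not merely at one.

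As a final sanity check, I will note that the hypothesis of Proposition~\ref{prop:inf} is automatically satisfied in this regime: since $n < d+2$, the graph $K_n$ contains no $K_{d+2}$ subgraph, so $\E$ cannot have $K_{d+2}$ support, and the second disjunct in the hypothesis holds vacuously. I do not expect any obstacle to arise; the lemma is a trivial base case, and its sole purpose is to let the harder arguments in the rest of the section focus on the genuinely interesting range $n \ge d+2$.
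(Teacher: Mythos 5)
Your proof is correct and takes essentially the same approach as the paper: both rely on the fact that for $n \le d+1$ the tangent space $T_{\bl'}L_{d,n}$ is all of $\CC^N$, making the conclusion immediate. You simply spell out the underlying reason (namely that $M_{d,n} = \CC^N$ and hence $L_{d,n} = s^{-1}(\CC^N) = \CC^N$), which the paper leaves implicit.
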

\begin{proof}
When  $n \le d+1$, 
$T_{\bl}L_{d,n}$ is equal to the full embedding space, and 
thus $\Dim(\E(T_{\bl}L_{d,n})) =r$.  Proposition~\ref{prop:inf}
is then trivial in this case.
\end{proof}
Thus, from now on, we may assume that 
$n\ge d+2$.

Let $\T$ be a $(dn - C)\times N$
matrix with rows spanning the tangent space $T_{\bl}L_{d,n}$
at some smooth point $\genericpoint$.
The complex analytic  tangent space at a
smooth point of a variety with pure dimension
has the same dimension
as the variety, which explains the shape of $\T$.

\paragraph{Block form and column basis}
Each column of $\E$ and $\T$ corresponds to an edge in $K_n$. We are
going to make use of edge-permuted versions of these matrices that have
particular block structures. To this end, we are now going to look at
the columns of $\E$ and determine which subsets can form a basis,
$\E_2$, of a linear space of dimension $r$.  So we permute and then
partition the columns of $\E$ into a block form
\[
	\begin{pmatrix}
    	\E_1 & \E_2
    \end{pmatrix}.
\]
where $\E_1$ is $D\times (N-r)$
and $\E_2$ is $D\times r$.
We define a column basis, $\E_2$ of $\E$, to be \defn{good} when $r=D$ and 
the columns of $\E_2$ correspond to the edges of a $K_{d+2}$. 
Any other column basis $\E_2$ will be called \defn{bad}.
We denote by $E_2$ the edge set corresponding to the columns 
of $\E_2$.

Suppose that $\E$ has $K_{d+2}$ support 
and $r=D$. 
then the $r$ columns of $\E$ corresponding 
to the edges of this $K_{d+2}$ 
must form the only column basis of $\E$. 
Moreover, it is good. 

\begin{lemma}\label{lem:flips:bad-basis}
If  $\E$ does not 
have $K_{d+2}$ support or $r < D$,
then there is a bad column basis for $\E$. 
\end{lemma}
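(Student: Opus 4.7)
The plan is to split the hypothesis ``$r < D$ or $\E$ lacks $K_{d+2}$ support'' into two cases and treat each separately. The case $r < D$ is immediate: every column basis of $\E$ consists of exactly $r$ columns, while a good column basis requires $r = D$ by definition. Hence every column basis is automatically bad, and $\E$ has at least one (its column space has a basis of columns by standard linear algebra), so the conclusion follows trivially.

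For the main case, assume $r = D$ and $\E$ has no $K_{d+2}$ support. I would argue the contrapositive: supposing that \emph{every} column basis of $\E$ is good, i.e., that its edge set consists of the $D$ edges of some $K_{d+2}$ subgraph of $K_n$, I want to show that $\E$ must actually have $K_{d+2}$ support, contradicting the assumption. Let $S$ denote the edge set indexing the non-zero columns of $\E$. Since those columns span the $D$-dimensional column space, $|S| \geq D$. If $|S| = D$, then the non-zero columns are automatically linearly independent and form the unique column basis; by hypothesis, its edge set lies inside some $K_{d+2}$, and hence $S$ does too, giving $K_{d+2}$ support.

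The remaining sub-case $|S| > D$ is handled by a single basis exchange. Pick any column basis $B_1$, equal by hypothesis to the edges of a $K_{d+2}$ on some vertex set $V_1$ with $|V_1| = d+2$, and any edge $e \in S \setminus B_1$. The non-zero column $v_e$ lies in the column space of $\E$, so the standard matroid exchange axiom yields an $f \in B_1$ such that $B_2 := (B_1 \setminus \{f\}) \cup \{e\}$ is again a column basis; by hypothesis, $B_2$ is the edge set of a $K_{d+2}$ on some vertex set $V_2$. Since $e \in B_2 \setminus B_1$, we have $V_1 \neq V_2$, so $|V_1 \cap V_2| \leq d+1$. Then $|B_1 \cap B_2|$, which is the number of edges having both endpoints in $V_1 \cap V_2$, satisfies $|B_1 \cap B_2| \leq \binom{d+1}{2}$, while the exchange construction forces $|B_1 \cap B_2| = D - 1$ directly.

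The crux, and the only non-trivial step, is the resulting inequality $D - 1 \leq \binom{d+1}{2}$, which fails for every $d \geq 1$ because $\binom{d+2}{2} - \binom{d+1}{2} = d+1 \geq 2$. The combinatorial content is simply that two distinct $K_{d+2}$ subgraphs can share at most $\binom{d+1}{2}$ edges, strictly fewer than the $D - 1$ edges that a single basis swap between two good bases would require. Everything else is routine matroid bookkeeping, so I expect this edge-overlap bound to be the only real obstacle.
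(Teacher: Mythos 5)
Your proof is correct and is essentially the paper's argument: both rest on a single Steinitz basis exchange starting from a good $K_{d+2}$ basis (together with the trivial cases $r<D$ and $|S|=D$), and both hinge on the combinatorial fact that two distinct $K_{d+2}$ subgraphs cannot have $D-1$ edges in common. You package the final step contrapositively via the edge-overlap bound $\binom{d+1}{2}<D-1$, while the paper argues directly that the exchanged edge set spans too many vertices to be a $K_{d+2}$, but these are the same observation in different clothing.
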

\begin{proof}
If
$r < D$, then by definition, 
no column basis can be good.  From now on, then, 
assume that $r=D$.

If $\E$ is supported on only $D$ columns, there is a unique column 
basis $\E_2$.  
Thus in this case, non-$K_{d+2}$ support for $\E$ will imply that the
unique column basis is bad.

Suppose instead there are more than $D$ 
non-zero columns of $\E$.
Thus, starting from, say, a good basis $\E_2$, we 
can exchange a non-zero column of $\E_1$ with an appropriate one 
from $\E_2$ to obtain another basis which 
is bad: removing an edge from a $K_{d+2}$
and replacing it with any other edge results in a 
graph that cannot be a $K_{d+2}$ (it has more vertices).
\end{proof}
\begin{remark}\label{rem:flips:bad-basis-iff}
In light of the paragraph preceding this lemma, 
Lemma~\ref{lem:flips:bad-basis} can be made into 
an ``if and only if'' statement.
\end{remark}

Going back to $\T$ and applying the same column used obtain 
$	\begin{pmatrix}
    	\E_1 & \E_2
    \end{pmatrix}$,
we get a block form
\ba
    \begin{pmatrix}
        \T_1  & \T_2 
    \end{pmatrix}
\ea
where 
$\T_1$ is $(dn-C)\times (N-r)$ and
$\T_2$ is $(dn-C)\times r$.

\begin{lemma}\label{lem:flips:T2-cols}
Assuming that $\E_2$ is a bad basis of $\E$ and $\bl$ is $E_2$-regular, 
the matrix $\T_2$ has rank $r$ (and 
in particular has linearly independent columns) 
\end{lemma}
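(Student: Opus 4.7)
The plan is to verify that the edge set $E_2$ is infinitesimally independent in $d$ dimensions and then invoke the definition of $E_2$-regularity directly. Once $E_2$ is infinitesimally independent, the whole statement reduces to reading off the definition.

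First I would handle infinitesimal independence. From Lemma~\ref{lem:flips:bad-basis} (or rather its use) together with the definition of ``bad,'' the basis $\E_2$ falls into one of two cases: either $r < D$, or $r = D$ but $E_2$ is not the edge set of a $K_{d+2}$. In either case, $|E_2| = r \leq D = \binom{d+2}{2}$ and $E_2$ is not the edge set of a $K_{d+2}$ (in the first case simply because it has the wrong cardinality). By the contrapositive of Proposition~\ref{prop:simplex}, $E_2$ must then be infinitesimally independent in $d$ dimensions. In particular, this guarantees (via Definition~\ref{def:ind}) that $E_2$-regular points in $L_{d,n}$ actually exist, so the hypothesis of the lemma is not vacuous.

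Next I would translate $E_2$-regularity into the rank statement. By Definition~\ref{def:ind}, the differential of $\pi_{\bar{E_2}}$ at the $E_2$-regular point $\bl$, restricted to the tangent space $T_\bl L_{d,n}$, has image of dimension $|E_2| = r$. Since $\pi_{\bar{E_2}}$ is simply the linear map that reads off the coordinates indexed by $E_2$ and zeros out the rest, its image on $T_\bl L_{d,n}$ (the row span of $\T$) is precisely the column span of the submatrix $\T_2$. Thus $\T_2$ has column rank exactly $r$, which is what was to be shown. Since $\T_2$ has $r$ columns, linear independence of the columns is equivalent to the rank statement.

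The only ``obstacle'' worth flagging is matching up the bad-basis dichotomy with the hypotheses of Proposition~\ref{prop:simplex}; once one observes that being bad forbids $E_2$ from being a $K_{d+2}$ while still keeping $|E_2| \leq D$, Proposition~\ref{prop:simplex} applies off the shelf and the rest is bookkeeping against the definitions.
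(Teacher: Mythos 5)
Your proof is correct and follows the paper's argument: apply (the contrapositive of) Proposition~\ref{prop:simplex} to conclude that $E_2$ is infinitesimally independent, then invoke $E_2$-regularity to read off that $\T_2$ has full rank $r$, exactly as the paper does via Definition~\ref{def:tans}. One small imprecision worth noting: since the tangent space $T_\bl L_{d,n}$ is the \emph{row} span of $\T$, the image under $\pi_{\bar{E_2}}$ is the row span of $\T_2$, not its column span; but row rank equals column rank, so the dimension count and the final conclusion are unaffected.
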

\begin{proof}
Since $(\E_1,\E_2)$ arises from a bad basis, and we have
only applied column permutations, the columns
of $\T_2$ corresponds to a subgraph $G$ of $K_n$
with at most
$D$ edges which is not $K_{d+2}$.  
Proposition~\ref{prop:simplex} tells us that the edges of 
$G$ are infinitesimally independent.
So, by 
$E_2$-regularity of $\bl$, these columns of $\T$ are 
linearly independent (Definition~\ref{def:tans}).
\end{proof}

\paragraph{Row rank}

\begin{lemma}\label{lem:flips:T1-T2-rows}
Assuming that $\E_2$ is a bad basis of $\E$ and $\bl$ is $E_2$-regular.
Then the block matrix 
\(
	\begin{pmatrix}
    	\T_1 & \T_2
    \end{pmatrix}
\) contains $r$ rows,
\(
	\begin{pmatrix}
    	\T'_1 & \T'_2
    \end{pmatrix}
\), such that 
$\T'_2$ forms a non-singular matrix.
\end{lemma}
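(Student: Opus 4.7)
The plan is to observe that this lemma is essentially an immediate consequence of the previous lemma (\ref{lem:flips:T2-cols}) via the equality of row rank and column rank.

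By Lemma~\ref{lem:flips:T2-cols}, under the hypotheses that $\E_2$ is a bad basis and $\bl$ is $E_2$-regular, the $(dn-C)\times r$ matrix $\T_2$ has rank $r$. Since column rank equals row rank for any matrix, the row space of $\T_2$ has dimension $r$ as well. Hence we may select $r$ linearly independent rows of $\T_2$; call this $r\times r$ submatrix $\T'_2$. By construction $\T'_2$ is non-singular. Taking the $r$ rows of $\T_1$ at the same indices gives the matrix $\T'_1$, and the block $(\T'_1 \mid \T'_2)$ is the desired $r \times N$ submatrix of $(\T_1 \mid \T_2)$.

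There is no real obstacle here; the content is entirely in Lemma~\ref{lem:flips:T2-cols}, which itself rests on the key combinatorial fact from Proposition~\ref{prop:simplex} that $K_{d+2}$ is the unique infinitesimally dependent graph on at most $D$ edges. The present lemma is just a linear-algebraic packaging step that repackages rank $r$ of $\T_2$ into the existence of a non-singular $r \times r$ minor, positioned so that its columns are exactly the last $r$ columns of the block form. This packaging will be what gets used in the subsequent argument (which presumably will exploit coordinate negations via sign flip matrices to build the smooth point $\bl'$ promised in Proposition~\ref{prop:inf}).
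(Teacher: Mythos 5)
Your proof is correct and is essentially identical to the paper's: both invoke Lemma~\ref{lem:flips:T2-cols} to get that $\T_2$ has rank $r$, pass from column rank to row rank, and select a row basis to form the non-singular $\T'_2$ (with $\T'_1$ the corresponding rows of $\T_1$). No differences worth noting.
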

\begin{proof}
Since we have a bad basis, 
from Lemma~\ref{lem:flips:T2-cols},
$\T_2$ has $r$ linearly independent columns
and thus $r$ linearly independent rows.
We can select any set of rows corresponding to a row basis of $\T_2$.
\end{proof}

Similarly, we have
\begin{lemma}\label{lem:flips:E1-E2-rows}
Let $\E_2$ be a column basis for $\E$.
Then the block matrix 
\(
	\begin{pmatrix}
    	\E_1 & \E_2
    \end{pmatrix}
\) 
contains $r$ rows,
\(
	\begin{pmatrix}
    	\E'_1 & \E'_2
    \end{pmatrix}
\), such that 
$\E'_2$ forms a non-singular matrix.
\end{lemma}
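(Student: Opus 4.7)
The plan is to mirror the argument of Lemma~\ref{lem:flips:T1-T2-rows}, replacing the tangent-space matrix $\T$ with the map matrix $\E$. The key observation is purely linear algebraic: a column basis, by definition, consists of $r$ linearly independent columns spanning the column space of $\E$.

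First I would note that since $\E_2$ is a column basis for $\E$, the $D \times r$ matrix $\E_2$ has rank exactly $r$. Then, using the fact that the row rank of a matrix equals its column rank, $\E_2$ has $r$ linearly independent rows. Select any such set of $r$ row indices from $\{1, \dots, D\}$ and let $\E'_2$ be the $r \times r$ submatrix of $\E_2$ formed from these rows; by construction $\E'_2$ is non-singular. Finally, extract the same rows from $\E_1$ to form $\E'_1$, giving the required block decomposition
\[
\begin{pmatrix} \E'_1 & \E'_2 \end{pmatrix}
\]
of the chosen rows of $\begin{pmatrix} \E_1 & \E_2 \end{pmatrix}$.

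There is no real obstacle here: the statement is essentially just an invocation of the equality of row rank and column rank, together with the block structure arising from the column partition already fixed before the lemma. Unlike Lemma~\ref{lem:flips:T1-T2-rows}, we do not need any hypothesis of ``badness'' of the basis or regularity of a point, because the rank of $\E_2$ equaling $r$ is built directly into the notion of a column basis.
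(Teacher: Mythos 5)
Your proof is correct and matches the paper's intended argument: the paper simply prefaces this lemma with ``Similarly, we have,'' indicating the same row-rank-equals-column-rank extraction used in Lemma~\ref{lem:flips:T1-T2-rows}, with the simplification you correctly note that the rank hypothesis is built into the definition of a column basis rather than needing the badness/regularity argument.
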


Next, we derive an implication of $\E$ dropping rank on
the tangent space.

\begin{lemma}
\label{lem:flips-row-rank}
Suppose there is a 
smooth point
$\bl\in L_{d,n}$ such that $\bl$ and 
all of its coordinate flips $\bl'$  have the property
that 
$\Dim(\E(T_{\bl'}L_{d,n})) <r$.
Let $\E_2$ be a bad basis for $\E$.
Let $\S_1$ be any  
any $(N-r)\times (N-r)$ sign flip matrix, and 
$\S_2$, any $r\times r$ sign flip matrix.

Then
the $r\times r$ matrix 
$\Z:= 
\E'_1 \S_1 \trans{\T'_1}
+ \E'_2 \S_2 \trans{\T'_2}
$ is singular.
\end{lemma}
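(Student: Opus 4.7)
My plan is to exhibit $\Z$ as an $r \times r$ submatrix of the larger $D \times (dn-C)$ matrix
\[
\E \S \trans{\T} \;=\; \E_1 \S_1 \trans{\T_1} + \E_2 \S_2 \trans{\T_2},
\]
where $\T$ is a matrix whose rows span $T_{\bl} L_{d,n}$, and $\S := \operatorname{diag}(\S_1, \S_2)$ is assembled compatibly with the column partition that produced $(\E_1, \E_2)$ and $(\T_1, \T_2)$. The singularity of $\Z$ will then follow from the hypothesis that this enlarged matrix has rank strictly less than $r$.

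The central observation is that $\S$ realizes a coordinate flip of $\bl$. Since $L_{d,n} = s^{-1}(M_{d,n})$ and the coordinate-squaring map $s(\cdot)$ is invariant under sign flips, every sign flip matrix $\S$ is a linear automorphism of $L_{d,n}$; in particular, $\bl' := \S \bl$ lies in $L_{d,n}$, is smooth (being the image of the smooth point $\bl$ under an invertible linear map that preserves $L_{d,n}$), and satisfies $T_{\bl'} L_{d,n} = \S \cdot T_{\bl} L_{d,n}$. In row-representation this tangent space is spanned by the rows of $\T \S$, so the image $\E(T_{\bl'} L_{d,n})$ is the column space of $\E (\T \S)^\top = \E \S \trans{\T}$. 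The hypothesis then says precisely that this column space has dimension strictly less than $r$.

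To finish, I would select the $r$ rows of $\E \S \trans{\T}$ indexed by the row selection that produced $(\E'_1, \E'_2)$ via Lemma~\ref{lem:flips:E1-E2-rows}, and the $r$ columns indexed by the row selection that produced $(\T'_1, \T'_2)$ via Lemma~\ref{lem:flips:T1-T2-rows} (these latter indices correspond, via transposition, to columns of $\trans{\T}$). A direct check shows that the resulting submatrix equals $\Z = \E'_1 \S_1 \trans{\T'_1} + \E'_2 \S_2 \trans{\T'_2}$, and since every $r \times r$ submatrix of a matrix of rank strictly less than $r$ is singular, $\Z$ is singular.

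There is no serious technical obstacle here; the argument is block-matrix bookkeeping once the sign-flip invariance of $L_{d,n}$ and the consequent equivariance of tangent spaces are in hand. The two preceding row-selection lemmas are only invoked to guarantee that the rows defining $(\E'_1, \E'_2)$ and $(\T'_1, \T'_2)$ (and hence the $r\times r$ submatrix to be extracted) actually exist.
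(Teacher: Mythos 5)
Your proof is correct and follows essentially the same approach as the paper: both observe that $\Z$ sits inside $\E\S\trans{\T}$ (whose rank is $\rank(\E(T_{\bl'}L_{d,n})) < r$ by the sign-flip invariance of $L_{d,n}$ and the resulting equivariance of tangent spaces), and conclude singularity from there. Your framing via ``every $r\times r$ submatrix of a matrix of rank $<r$ is singular'' is a cleaner phrasing of the paper's rank inequality followed by a contradiction argument, but the mathematical content is identical.
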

\begin{proof}
Let $\S$ be the $N\times N$ be the
sign flip matrix with the $\S_i$ as its
diagonals. Let $\bl'$ be the point obtained from $\bl$ under the sign flips of $\S$. Because $L_{d,n}$ is symmetric under coordinate 
negations, then $T_{\bl'}L_{d,n}$ is spanned by the columns of
$\S \trans{\T}$.
Then we have
$\Dim(\E(T_{\bl'}L_{d,n})) =
\rank(\E \S \trans{\T}) = 
\rank(
\E_1 \S_1 \trans{\T_1} 
+ \E_2 \S_2 \trans{\T_2}) 
\ge
\rank(
\E'_1 \S_1 \trans{\T'_1} 
+ \E'_2 \S_2 \trans{\T'_2})$.

If for some $\S$, the matrix $\Z$ were non-singular,
then we would have a certificate that
$\E$ does not drop rank on that coordinate flip of the 
tangent space, in contradiction to the hypothesis 
on $\Dim(\E(T_{\bl'}L_{d,n}))$.
\end{proof}
\begin{remark}
The rank of $\Z$ may change as the $\S_i$ do, but it 
cannot rise to $r$.
\end{remark}

\paragraph{Conclusion of the proof}
Assume that 
$\E$ does not 
have $K_{d+2}$ support or $r < D$.
From Lemma~\ref{lem:flips:bad-basis},
there is a bad column basis $\E_2$ for $\E$. 
From Lemma~\ref{lem:flips:T1-T2-rows}, for an $E_2$-regular $\bl$, 
$\T'_2$ is a non-singular matrix.

Suppose that at this 
$\bl$, we had for 
all of its coordinate flips $\bl'$,   the property
that 
$\Dim(\E(T_{\bl'}L_{d,n})) <r$.
Then
from Lemma~\ref{lem:flips-row-rank},
for any choice of  $\S_2$, 
the matrix 
$\Z$ would be singular. 
Since $\E_2$ is a basis, 
$\E'_2$ is non-singular matrix
(Lemma~\ref{lem:flips:E1-E2-rows}),
thus
$\Z':= \S_2 {\E'_2}^{-1} \Z = 
\S_2 ({\E'_2}^{-1} \E'_1 \S_1 \trans{\T'_1})
+  \trans{\T'_2}$ 
would be singular for any choice of $\S_2$.
Thus, Lemma~\ref{lem:nonsing} on determinants and sign flips (below)
would apply to $\Z'$, and we would
conclude that $\T'_2$ is singular. 

From the resulting contradiction, we can deduce that for an $E_2$-regular point 
$\bl$, one of its coordinate flips $\bl'$ must have 
$\Dim(\E(T_{\bl'}L_{d,n})) = r$. By regularity, $\bl$ is smooth, and so too is any coordinate flip such as $\bl'$.
\qed

\subsection{Determinants and flips}
\label{sec:det}
In this section, we will establish 
a technical
lemma about determinants and sign flips.

\begin{lemma}\label{lem:nonsing}
Suppose that $\Z = \S\X + \Y$ is an $r\times r$ matrix 
and
$\det(\Z) = 0$ for all choices of sign flips, $\S$. Then 
$\det(\Y) = 0$.
\end{lemma}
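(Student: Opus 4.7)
The plan is to exploit the multilinearity of the determinant in its rows, together with the symmetry of $\{\pm 1\}^r$ under negation.

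First, I would expand $\det(\S\X + \Y)$ by multilinearity. Writing $\sigma_i$ for the diagonal entries of $\S$ and $X_i, Y_i$ for the $i$-th rows of $\X, \Y$, the $i$-th row of $\S\X + \Y$ is $\sigma_i X_i + Y_i$. Multilinearity gives
\[
    \det(\S\X + \Y) \;=\; \sum_{T\subseteq\{1,\ldots,r\}} \Bigl(\prod_{i\in T}\sigma_i\Bigr)\det(M_T),
\]
where $M_T$ is the matrix whose $i$-th row equals $X_i$ if $i\in T$ and $Y_i$ otherwise. Note that $M_\emptyset = \Y$.

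The key observation is that the right-hand side, viewed as a function of $(\sigma_1,\ldots,\sigma_r)\in\{\pm1\}^r$, is a multilinear polynomial in the $\sigma_i$, and the characters $\prod_{i\in T}\sigma_i$ are orthogonal with respect to the uniform measure on $\{\pm1\}^r$: averaging $\prod_{i\in T}\sigma_i$ over all $2^r$ sign vectors yields $1$ when $T=\emptyset$ and $0$ otherwise. Therefore
\[
    2^{-r}\sum_{\S\in\{\pm1\}^r}\det(\S\X + \Y) \;=\; \det(M_\emptyset) \;=\; \det(\Y).
\]

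By hypothesis, every summand on the left vanishes, so $\det(\Y) = 0$, as claimed. There is no real obstacle here; the whole proof is essentially one display of multilinear expansion followed by averaging over the hypercube.

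\qed
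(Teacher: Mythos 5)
Your proof is correct and is essentially the same as the paper's: both expand $\det(\S\X+\Y)$ by multilinearity over all subsets of rows and then average over the $2^r$ sign flips to isolate $\det(\Y)$. The paper carries out the cancellation via a binomial-coefficient count of $\sum_{k}\binom{|I|}{k}(-1)^k$, whereas you invoke the orthogonality of the characters $\prod_{i\in T}\sigma_i$ on $\{\pm1\}^r$; these are the same calculation in slightly different clothing.
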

\begin{proof}
Multilinearity of the determinant allows us to 
express $\det(\Z)$ 
as $\det(\Z')+\det(\Z'')$,
where $\Z'$ is the matrix $\Z$
with its first row replaced
by the first row of $\S\X$, and
where $\Z''$ is the matrix $\Z$
with its first row replaced
by the first row of $\Y$.
We can likewise expand out each
of $\det(\Z')$ and $\det(\Z'')$ 
by splitting their second rows.
Applying this decomposition recursively  
we ultimately get:
\[
	\det(\S\X + \Y) = \sum_{I\subseteq [r]} \det(\Z^\S_I)
\]
where $[r] = \{1,2,\dots,r\}$,
and $\Z^\S_I$ is the matrix that has the rows indexed by $I$ from $\S\X$ and the rest 
from $\Y$.

Now  sum the above over the $2^r$ choices of $\S$ and rearrange
\[	
	\sum_{\S} \det(\S\X + \Y) = \sum_{\S}\sum_{I\subseteq [r]} \det(\Z^\S_I) = 
    \sum_{I\subseteq [r]} \overbrace{\sum_{\S} \det(\Z^\S_I)}^{\star}
\]
For fixed $I$, each $\det(\Z^\S_I) =  (-1)^{\sigma(\S,I)}\det(\Z^{\I}_I)$,
where $\sigma(\S,I)$ is the number of rows corresponding to $I$ where 
$\S$ has a diagonal entry of $-1$.  Thus, for each $I$, ($\star$) is
\[
	2^{r-|I|}\cdot\left(\sum_{k=0}^{|I|}\binom{|I|}{k}(-1)^k\right)
    \cdot\det(\Z^{\I}_I)
\]
(The power of two factor accounts for all of the
sign choices in $\S$ over the complement of $I$.)
The coefficient of $\det(\Z^{\I}_I)$
equals $2^{r}$ when $I$ is empty. Otherwise 
it is zero since the inner term is simply the 
binomial expansion of
$(1-1)^{|I|}$. Thus,
\[
	\sum_{\S} \det(\S\X + \Y) =  2^r \det(\Y)
\]
Since this sum vanishes by hypothesis, we get $\det(\Y) = 0$.
\end{proof}

\section{Automorphisms of $L_{d,n}$}\label{sec:Ldn-automorphisms}

In this section we will characterize the linear automorphisms of
$L_{d,n}$ for all $d$ and $n$.
One key feature will be that we are no longer 
restricted to the case of edge permutations.

We will need to consider a few distinct cases for $d$ and $n$.


\begin{definition}\label{def:signed-perm}
Set $N := \binom{n}{2}$ and identify the rows and 
columns of an $N\times N$ matrix with the 
edges of $K_n$.

A \defn{signed permutation} is an $N\times N$
matrix $\P'$ that is the product $\S\P$ of a 
sign flip matrix $\S$ and a permutation 
matrix $\P$.

A signed permutation $\P':=\S\P$ is 
\defn{induced by a vertex relabeling} if 
$\P$ is induced by a vertex relabeling of $K_n$.
\end{definition}

\subsection{Automorphisms of $L_{d,n}$, $n\ge d+3$}
\label{sec:bign}

Let $d\ge 1$.
This section will be concerned with $L_{d,n}$ where $n$ is larger than
the minimal value, $d+2$.

\begin{theorem}\label{thm:no-regges}
Let $n \ge d+3$.  Then any linear automorphism 
$\A$ of $L_{d,n}$ of is a scalar multiple
of a signed permutation that is induced by a 
vertex relabeling.
\end{theorem}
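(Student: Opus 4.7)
The plan is to apply Theorem~\ref{thm:linImage} to the automorphism $\A$ once for each $K_{d+2}$ subgraph of $K_n$, and then to run an iterative ``peeling'' argument that reduces the resulting $K_{d+2}$-block support constraint all the way down to single-edge support. Concretely, for each $K_{d+2}$ subgraph $F$ of $K_n$, I would consider the $D\times N$ matrix $\E_F := \pi_{\bar F}\A$.  Since $\A$ is invertible, $\E_F$ has rank $D$; and since $\A$ is an automorphism of $L_{d,n}$ and $\pi_{\bar F}$ sends $L_{d,n}$ onto $L_{d,d+2}$, the image $\E_F(L_{d,n}) = L_{d,d+2}$ is of dimension $d(d+3)/2 = D-1$ by Theorem~\ref{thm:Lvariety}.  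Theorem~\ref{thm:linImage} then forces $\E_F$ to have $K_{d+2}$ support, i.e.\ the rows of $\A$ indexed by edges of $F$ are supported on columns indexed by edges of some $K_{d+2}$ subgraph $\phi_{d+2}(F)$.  A quick count using invertibility of $\A$ shows $\phi_{d+2}$ is injective: if $\phi_{d+2}(F_1) = \phi_{d+2}(F_2)$ for $F_1\ne F_2$, then $|F_1\cup F_2|>D$ linearly independent rows would be confined to a $D$-dimensional coordinate subspace, a contradiction.

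The main step, and the part I expect to be the principal obstacle, is the iterative descent.  I will show by downward induction on $k$ that for every $K_k$ subgraph $H$ of $K_n$ with $2\le k\le d+2$, the rows of $\A$ indexed by edges of $H$ are supported on the columns indexed by edges of a unique $K_k$ subgraph $\phi_k(H)$, and that $\phi_k$ is injective.  For the inductive step from $k$ to $k-1$, given a $K_{k-1}$ subgraph $J$, I would pick two distinct $K_k$ extensions $H_1,H_2\supset J$; such extensions exist whenever $n-(k-1)\ge 2$.  Injectivity of $\phi_k$ gives $\phi_k(H_1)\ne \phi_k(H_2)$, and any two distinct $K_k$ subgraphs share at most a $K_{k-1}$, whence $|\phi_k(H_1)\cap \phi_k(H_2)|\le \binom{k-1}{2}$.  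On the other hand, the $\binom{k-1}{2}$ rows of $\A$ indexed by $J$ are linearly independent and are supported inside $\phi_k(H_1)\cap \phi_k(H_2)$, so this intersection has at least $\binom{k-1}{2}$ edges; hence it is exactly a $K_{k-1}$, and coincides with the support of the $J$-indexed rows.  In particular this intersection is independent of the choice of extensions, defining $\phi_{k-1}(J)$ unambiguously, and the same row-count argument as at the base case shows $\phi_{k-1}$ is injective.  The hypothesis $n\ge d+3$ is used precisely to guarantee two distinct extensions at the top step $k=d+2$; the inequality $n\ge k+1$ needed at later steps then follows for free.

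Running the induction down to $k=2$ shows that each row $\A_e$ has a single nonzero entry, in the column indexed by the single edge $\phi_2(e)$, so $\A = \D\P$ is a generalized permutation.  To conclude, I would transfer to $M_{d,n}$: direct computation gives $s(\A\bl) = \A^{(2)}\,s(\bl)$ for $\bl\in L_{d,n}$, where $\A^{(2)} := \D^2\P$ squares each diagonal entry of $\D$.  Thus $\A^{(2)}$ is a generalized permutation automorphism of $M_{d,n}$, so Theorem~\ref{thm:bk-linear} applies: the entries of $\D^2$ all equal a common constant $c$, and $\P$ is induced by a vertex relabeling of $K_n$.  Taking a square root then yields $\A = \sqrt{c}\,\S\P$ with $\S$ a sign-flip matrix, which is exactly the conclusion of Theorem~\ref{thm:no-regges}.
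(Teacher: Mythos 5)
Your proof is correct, and it follows the same overall strategy as the paper: use Theorem~\ref{thm:linImage} to force $K_{d+2}$ support of $\pi_{\bar F}\A$ for each $K_{d+2}$ subgraph $F$, deduce that $\A$ is a generalized permutation, transfer to $M_{d,n}$ via $\A = \D\P \mapsto \D^2\P$, and finish with Theorem~\ref{thm:bk-linear}. The middle step, however, is handled by a genuinely different argument. The paper's Lemma~\ref{lem:gen-perm} is a one-shot pigeonhole: if a row $e$ had two nonzero entries whose four endpoints span a set $X$ of $\ge 3$ vertices, then each of the $\binom{n-2}{d}$ $K_{d+2}$ subgraphs through $e$ would have to map to one of the at most $\binom{n-3}{d-1}$ $K_{d+2}$ subgraphs containing $X$, and $n\ge d+3$ makes $\binom{n-2}{d} > \binom{n-3}{d-1}$, contradicting the bijection of Lemma~\ref{lem:tet-a-tet}. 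Your approach instead peels down level by level, building a compatible family of injective maps $\phi_k$ on $K_k$ subgraphs for $k = d+2, d+1, \dots, 2$ by intersecting the images of two distinct $K_k$ extensions of a fixed $K_{k-1}$ and comparing a row-rank lower bound against the $\binom{k-1}{2}$ upper bound on the intersection size. This uses $n\ge d+3$ in a different place (to guarantee two extensions exist at the top level), but the hypothesis is needed all the same. Your route is longer but exposes more intermediate structure and avoids the binomial inequality; the paper's is more compact. Both are sound, and the well-definedness and injectivity claims in your inductive step check out (the support of the $J$-indexed rows has exactly $\binom{k-1}{2}$ columns by the rank argument, and is independent of the chosen extensions).
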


The plan is to use 
machinery from Section~\ref{sec:maps}
to show that the automorphism must be in the form 
of a generalized edge permutation. We will then be able to
switch over 
to the $M_{d,n}$ setting, where we can 
apply Theorem~\ref{thm:bk-linear}.

\begin{definition}\label{def:support-map}
Let $\A$ be an $N\times N$ matrix.  We identify the 
rows and columns of $\A$ with the edges 
of $K_n$.  This induces a
map $\tau_\A$ from subgraphs of $K_n$ to 
subgraphs of $K_n$ by mapping the subgraph 
associated with a collection of rows 
to the column support of this sub-matrix.
\end{definition}
\begin{lemma}\label{lem:tet-a-tet}
Let 
$n\ge d+2$ and suppose that $\A$ is a 
linear automorphism of $L_{d,n}$.
 Then the associated combinatorial map
$\tau_\A$ induces a permutation on  $K_{d+2}$ 
subgraphs of $K_n$.
\end{lemma}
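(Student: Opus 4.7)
My plan is to apply Theorem~\ref{thm:linImage} to the row-submatrix of $\A$ indexed by the edges of a $K_{d+2}$, and then use the invertibility of $\A$ to upgrade the resulting set map to a bijection.

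First, fix a $K_{d+2}$ subgraph of $K_n$ with edge set $E$, so $|E| = D$. Let $\E$ be the $D \times N$ matrix obtained by selecting the rows of $\A$ indexed by $E$; equivalently, $\E = \pi_{\bar{E}} \A$ after the natural identification of rows with edges.  Since $\A$ is invertible and $\pi_{\bar{E}}$ is a coordinate projection onto $D$ coordinates, $\E$ has full row rank $r = D$.

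The central observation is that, because $\A$ preserves $L_{d,n}$,
\[
    \E(L_{d,n}) \;=\; \pi_{\bar{E}}(\A(L_{d,n})) \;=\; \pi_{\bar{E}}(L_{d,n}).
\]
A short commuting-diagram check, using that both $\pi_{\bar{E}}$ and the squaring map $s$ act coordinate-wise (so they commute) and that any configuration of $d+2$ points in $\CC^d$ extends to an $n$-point configuration, identifies $\pi_{\bar{E}}(L_{d,n})$ with $L_{d,d+2}$ sitting inside the $D$ coordinates indexed by $E$. By Theorem~\ref{thm:Lvariety} this image has dimension $d(d+2) - C = D - 1$, which is strictly less than $r = D$. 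Thus the hypothesis of Theorem~\ref{thm:linImage} is satisfied, and $\E$ must have $K_{d+2}$ support. Combined with the fact that $\E$ has rank $D$, this forces all $D$ of the potentially nonzero columns of $\E$ to be genuinely nonzero, and to be indexed by the edges of a single $K_{d+2}$. By the definition of $\tau_\A$, this says exactly that $\tau_\A(E)$ is the edge set of a $K_{d+2}$.

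To promote $\tau_\A$ to a permutation on $K_{d+2}$ subgraphs it suffices to show injectivity. If two distinct $K_{d+2}$ edge sets $E_1, E_2$ satisfied $\tau_\A(E_1) = \tau_\A(E_2) = F$, then the $|E_1 \cup E_2| > D$ rows of $\A$ indexed by $E_1 \cup E_2$ would all be supported on the $D$ columns of $F$; this contradicts linear independence of those rows, which is guaranteed by the invertibility of $\A$. Since the set of $K_{d+2}$ subgraphs of $K_n$ is finite, any injection from it to itself is a bijection. The main potential obstacle is really just the small commuting argument identifying $\pi_{\bar{E}}(L_{d,n})$ with $L_{d,d+2}$; everything else is a direct invocation of Theorem~\ref{thm:linImage} together with a pigeonhole on rows and columns.
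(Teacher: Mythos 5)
Your proposal is correct and follows essentially the same route as the paper: both proofs apply Theorem~\ref{thm:linImage} to the composite $\pi_{\bar{E}}\A$ (equivalently, the $D$ rows of $\A$ indexed by a $K_{d+2}$), use that $\pi_{\bar{E}}(L_{d,n})$ sits inside $L_{d,d+2}$ of dimension $D-1<D$ to force $K_{d+2}$ support, and then invoke invertibility of $\A$ for injectivity via a row-count pigeonhole. The only difference is cosmetic: you spell out the dimension count $d(d+2)-C = D-1$ and the commutation of $\pi_{\bar{E}}$ with the squaring map, which the paper leaves implicit.
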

\begin{proof}
If $\E$ is any $D \times N$ matrix of 
rank $D$, with $\E(L_{d,n}) \subset  L_{d,d+2}$,
then
the map $\E\A$  also has these properties.
Thus, by Theorem~\ref{thm:linImage} both $\E$ and 
$\E\A$ have $K_{d+2}$ support. 
There is such 
an $\E$ for each $K_{d+2}$ subgraph: simply take the matrix
of the edge forgetting map $\pi_{\bar{K}}$, where $K$ is
an edge set comprising the edges of 
this $K_{d+2}$.
This situation is only possible 
if $\tau_\A(T)$ maps each $K_{d+2}$ 
subgraph $T$ to another $K_{d+2}$  subgraph.  

If the map on $K_{d+2}$  subgraphs induced by $\tau_\A$ is not 
injective, then the matrix  $\A$ would have 
more than $D$  rows supported by only $D$ columns, and 
thus $\A$ would be singular.  Since 
$\A$ is a linear automorphism of $L_{d,n}$ it has to 
be invertible, and the resulting contradiction 
completes the proof.
\end{proof}

This lets us prove the following.
\begin{lemma}\label{lem:gen-perm}
Let $n\ge d+3$ and let $\A$ be a linear automorphism
of $L_{d,n}$.  Then $\A$ is a generalized permutation.
\end{lemma}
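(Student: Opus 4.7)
The plan is to exploit the combinatorial map $\tau_\A$ of Lemma~\ref{lem:tet-a-tet}, which is a bijection on the $K_{d+2}$ subgraphs of $K_n$, and argue directly that every row of $\A$ is supported on a single edge. For each edge $e$ of $K_n$, write $S(e)$ for the support of row $e$ of $\A$, which is non-empty because $\A$ is invertible. The proof of Lemma~\ref{lem:tet-a-tet} shows that whenever $K$ is a $K_{d+2}$ subgraph of $K_n$, the submatrix of $\A$ whose rows are indexed by the edges of $K$ has its non-zero columns exactly on the edges of $\tau_\A(K)$. In particular, $S(e) \subseteq \tau_\A(K)$ for every $K_{d+2}$ subgraph $K$ containing $e$.

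The heart of the argument is a counting step. Let $\mathcal{K}_f$ denote the set of $K_{d+2}$ subgraphs of $K_n$ whose edge set contains a given edge $f$; its cardinality is $\binom{n-2}{d}$, independent of $f$. If $f \in S(e)$, then every element of $\tau_\A(\mathcal{K}_e)$ contains $f$, so $\tau_\A(\mathcal{K}_e) \subseteq \mathcal{K}_f$; since $\tau_\A$ is a bijection, both sides have cardinality $\binom{n-2}{d}$ and thus $\tau_\A(\mathcal{K}_e) = \mathcal{K}_f$. If $S(e)$ contained two distinct edges $f_1, f_2$, this would force $\mathcal{K}_{f_1} = \mathcal{K}_{f_2}$.

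The main obstacle is ruling this out, and it is precisely here that the hypothesis $n \ge d + 3$ is used. I would verify by a short case analysis --- splitting on whether $f_1$ and $f_2$ share a vertex --- that whenever $f_1 \neq f_2$ one can build a $K_{d+2}$ subgraph that contains $f_1$ but not $f_2$. In each case the construction requires choosing $d$ vertices outside a fixed set of three vertices of $K_n$, which is possible exactly when $n \ge d + 3$. This yields the required contradiction, so $|S(e)| = 1$ for every edge $e$.

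Therefore each row of $\A$ has exactly one non-zero entry. Since $\A$ is invertible, no column of $\A$ can be zero; and because the $N$ non-zero entries are distributed one per row across $N$ rows and $N$ columns, by pigeonhole each column also has exactly one non-zero entry. Hence $\A$ is a generalized permutation in the sense of Definition~\ref{def:gen-perm}, completing the proof.
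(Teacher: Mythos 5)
Your proof is correct, and it rests on the same key ingredient as the paper's---the bijection $\tau_\A$ on $K_{d+2}$ subgraphs from Lemma~\ref{lem:tet-a-tet}---but organizes the counting differently. The paper observes that if row $e$ has two non-zero entries then every $K_{d+2}$ subgraph $T$ containing $e$ must be sent by $\tau_\A$ to a $K_{d+2}$ containing a fixed vertex set $X$ of size at least $3$, and gets a contradiction directly from the strict inequality $\binom{n-2}{d}>\binom{n-3}{d-1}$ (which holds exactly when $n\ge d+3$), showing $\tau_\A$ cannot be injective on $\mathcal{K}_e$. You instead first upgrade the inclusion $\tau_\A(\mathcal{K}_e)\subseteq\mathcal{K}_f$ to the equality $\tau_\A(\mathcal{K}_e)=\mathcal{K}_f$ using bijectivity plus the equal cardinality $\binom{n-2}{d}$, deduce $\mathcal{K}_{f_1}=\mathcal{K}_{f_2}$ if the row support had two elements, and then contradict that by explicitly constructing a $K_{d+2}$ containing $f_1$ but missing a vertex of $f_2$ (again requiring $n\ge d+3$). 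Both are short pigeonhole arguments hinging on the same lemma and the same threshold; the paper's version dispenses with your case split on whether $f_1$ and $f_2$ share a vertex, while yours isolates the tidy structural fact $\tau_\A(\mathcal{K}_e)=\mathcal{K}_f$ and makes explicit the final column-counting step that the paper compresses into ``as a non-singular matrix.'' One small inaccuracy: the fact that the support of the $K$-rows of $\A$ equals $\tau_\A(K)$ is Definition~\ref{def:support-map} rather than something extracted from the proof of Lemma~\ref{lem:tet-a-tet}; what the lemma adds is that this support is a $K_{d+2}$ and that the induced map is a permutation.
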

\begin{proof}
Suppose, w.l.o.g., that the row corresponding to 
the edge $e:=\{1,2\}$ has two non-zero entries corresponding 
to edges $\{i,j\}$ and $\{k,\ell \}$.  
By  Lemma~\ref{lem:tet-a-tet}, 
any $K_{d+2}$ 
 subgraph 
$T$ containing the edge $e$ must be mapped by $\tau_\A$ to a $K_{d+2}$  
subgraph $T'$ that contains the vertex set 
$X:=\{i,j\}\cup \{k,\ell\}$.

Since $|X|\ge 3$ there are at most 
$\binom{n-3}{d-1}$ 
choices 
for $T'$.  Meanwhile, there are $\binom{n-2}{d}$ choices for 
$T$.  Since $n\ge d+3$, we have $\binom{n-2}{d} > \binom{n-3}{d-1}$, contradicting the permutation
of $K_{d+2}$ subgraphs
guaranteed by 
Lemma~\ref{lem:tet-a-tet}.

Thus each row of $\A$ can have at most one non-zero entry.
As a non-singular matrix, this makes $\A$ a generalized
permutation.
\end{proof}
At this point, we want to move back 
to the setting of $M_{d,n}$, which we do with this next 
result.
\begin{lemma}\label{lem:L-to-M}
Let $\A := \D\P$ be a generalized permutation, where
$\D$ is an invertible
diagonal matrix and $\P$ is a permutation matrix.
If $\A$ is a linear automorphism of $L_{d,n}$ 
then $\D^2\P$ is a
linear automorphism of $M_{d,n}$.
\end{lemma}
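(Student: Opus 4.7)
The plan is to exploit the fact that a generalized permutation interacts very simply with the coordinate-wise squaring map $s$, together with the fact that $s$ surjects $L_{d,n}$ onto $M_{d,n}$ by definition.

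First I would verify the following identity at the level of maps on $\CC^N$: if $\P$ corresponds to the permutation $\sigma$ and $\D = \mathrm{diag}(d_i)$, then for any $\bl \in \CC^N$,
\[
	s(\D\P\bl)_i = (d_i \bl_{\sigma(i)})^2 = d_i^2\, s(\bl)_{\sigma(i)} = (\D^2 \P\, s(\bl))_i,
\]
so $s\circ \A = (\D^2\P)\circ s$. This is the only computation in the argument.

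Next I would use this identity to transfer the automorphism property from $L_{d,n}$ to $M_{d,n}$. Given any $\bm \in M_{d,n}$, choose $\bl \in L_{d,n}$ with $s(\bl)=\bm$ (such a preimage exists by definition of $L_{d,n}$). Then
\[
	(\D^2\P)\bm = (\D^2\P)\,s(\bl) = s(\A\bl),
\]
and since $\A\bl \in L_{d,n}$, the right-hand side lies in $M_{d,n}$. Hence $\D^2\P$ maps $M_{d,n}$ into itself, and it is non-singular because $\D$ and $\P$ are.

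To promote ``maps into'' to ``bijects onto,'' I would repeat the argument for $\A^{-1}$. Observe that $\A^{-1} = \P^{-1}\D^{-1} = (\P^{-1}\D^{-1}\P)\P^{-1}$ is again a generalized permutation of the form $\D'\P^{-1}$ with $\D' = \P^{-1}\D^{-1}\P$ diagonal and invertible, and it is also a linear automorphism of $L_{d,n}$. By the same computation, $(\D')^2\P^{-1}$ maps $M_{d,n}$ into itself, and a direct check gives
\[
	(\D')^2\P^{-1} = \P^{-1}\D^{-2}\P\P^{-1} = \P^{-1}\D^{-2} = (\D^2\P)^{-1}.
\]
So both $\D^2\P$ and its inverse send $M_{d,n}$ into $M_{d,n}$, making $\D^2\P$ a linear automorphism of $M_{d,n}$. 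There is no genuine obstacle here; the only thing to be careful about is not to confuse the two different diagonal matrices $\D$ and $\D'$ when inverting, which the conjugation identity above handles.
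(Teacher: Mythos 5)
Your proof is correct and rests on exactly the same identity as the paper's: $s\circ(\D\P)=(\D^2\P)\circ s$, which the paper exhibits implicitly in its chain $(\D\P\bl)^2 = \D^2(\P\bl)^2 = (\D^2\P)\bl^2$. The one thing you do beyond the paper is explicitly close the gap between ``$\D^2\P$ maps $M_{d,n}$ into itself'' and ``$\D^2\P$ is an automorphism'' by running the same argument on $\A^{-1} = \D'\P^{-1}$ and checking $(\D')^2\P^{-1} = (\D^2\P)^{-1}$; the paper leaves surjectivity implicit (it follows in general because a nonsingular linear map sending a variety into itself preserves dimension and component structure, hence must be onto), but your explicit check is clean and self-contained.
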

\begin{proof}
Let $\bl^2$ denote the vector of coordinate-wise square
of a vector $\bl \in \CC^N$; in this proof squares of vectors
are coordinate-wise.  Now we check that
\ba
	\bl^2 \in M_{d,n} & \Rightarrow &  \\
    \bl \in \LN & \Rightarrow & \\
    \D\P\bl\in \LN & \Rightarrow & \text{($\A$ is an automorphism)} \\
    (\D\P\bl)^2\in \ M_{d,n} & \Rightarrow & \\
    \D^2(\P\bl)^2 \in M_{d,n} & \Rightarrow &  \text{($\D$ is diagonal)} \\
    (\D^2\P)\bl^2 \in M_{d,n}& & \text{($\P$ is a permutation)}
\ea
\end{proof}

\begin{proof}[Proof of  Theorem~\ref{thm:no-regges}]
From Lemma~\ref{lem:gen-perm},
any linear automorphism $\A$ of 
$L_{d,n}$ with $n\ge d+3$ is a generalized permutation
$\A=\D\P$.
Lemma~\ref{lem:L-to-M} implies that $\A$ 
gives rise to a generalized edge permutation $\D^2\P$
that is a linear automorphism of $M_{d,n}$. Theorem 
\ref{thm:bk-linear} then tells us that 
$\D^2\P=s^2 \P$ has uniform scale and also is induced
by a vertex relabeling.  Finally
$\A$ is then a scalar multiple of a signed permutation 
(Lemma~\ref{lem:L-to-M} ``forgets'' the signs) as 
required.
\end{proof}

\subsection{Automorphisms of $L_{d,d+2}$, with $d\ge 3$}
\label{sec:min3}
Our next case is when $n$ is minimal, but we will only deal with
the case of $d \ge 3$.

\begin{theorem}\label{thm:no-regges3}
Let $d \ge 3$.  
Then any linear automorphism 
$\A$ of $L_{d,d+2}$  is a scalar multiple
of a signed permutation that is induced by a 
vertex relabeling.
\end{theorem}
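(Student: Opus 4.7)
The plan is to reduce the minimal case $n = d+2$ with $d \ge 3$ to the non-minimal case already handled by Theorem~\ref{thm:no-regges}, using the singular locus of $L_{d,d+2}$.

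First I would identify the singular locus of $L_{d,d+2}$. Since $\bad(L_{d,d+2}) = L_{d-1,d+2}\cup(Z\cap L_{d,d+2})$ (the preimage under $s$ of $\sing(M_{d,d+2})\cup Z = M_{d-1,d+2}\cup Z$), Lemma~\ref{lem:Asmooth} gives
\[ L_{d-1,d+2}\setminus Z \;\subseteq\; \sing(L_{d,d+2}) \;\subseteq\; L_{d-1,d+2}\cup Z. \]
For $d\ge 3$, Theorem~\ref{thm:Lvariety} says $L_{d-1,d+2}$ is irreducible (since $d-1\ge 2$), and because it contains Euclidean configurations with no vanishing coordinate, $L_{d-1,d+2}\setminus Z$ is nonempty and therefore dense in $L_{d-1,d+2}$. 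Taking closures on the left, using that $\sing(L_{d,d+2})$ is Zariski-closed, yields $L_{d-1,d+2}\subseteq\sing(L_{d,d+2})$; so $L_{d-1,d+2}$ is an irreducible component of the singular locus.

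Next I would show that $L_{d-1,d+2}$ is the unique irreducible component of $\sing(L_{d,d+2})$ that linearly spans $\CC^N$. Any other component $W$ is irreducible and contained in $L_{d-1,d+2}\cup Z$, and irreducibility forces $W = L_{d-1,d+2}$ or $W\subseteq\{l_{ij}=0\}$ for a single edge $ij$; the latter case prevents $W$ from spanning $\CC^N$. On the other hand, $L_{d-1,d+2}$ does span $\CC^N$: it is invariant under all coordinate sign flips, so any linear functional $\sum c_{ij}l_{ij}$ vanishing on it must vanish at every sign-flipped Euclidean realization $(\epsilon_{ij}l_{ij}(\p))$ of a configuration $\p\in\RR^{d-1}$; subtracting the evaluations at two sign patterns differing only in slot $ij$ gives $c_{ij}l_{ij}(\p) = 0$, which forces $c_{ij}=0$ after choosing $\p$ so that $l_{ij}(\p)\ne 0$.

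Finally, since $\A$ is invertible and preserves $L_{d,d+2}$, it permutes the irreducible components of $\sing(L_{d,d+2})$ and preserves the property of linearly spanning $\CC^N$. Hence $\A(L_{d-1,d+2}) = L_{d-1,d+2}$, and $\A$ restricts to a linear automorphism of $L_{d-1,d+2}$ in the sense of Definition~\ref{def:linear-automorphism}. Applying Theorem~\ref{thm:no-regges} with dimension $d-1$ and vertex count $n = d+2 = (d-1)+3$ completes the proof, since its conclusion is already a statement about $\A$ as an $N\times N$ matrix on the full ambient space $\CC^N$. The main obstacle is handling the other singular components, all of which sit inside a coordinate hyperplane; the linear-span-of-$\CC^N$ criterion is the invariant that prevents $\A$ from swapping $L_{d-1,d+2}$ with any of them, and is what makes the induction on $d$ go through.
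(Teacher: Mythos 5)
Your proof is correct and takes essentially the same route as the paper: show that $L_{d-1,d+2}$ lies in $\sing(L_{d,d+2})$ via Lemma~\ref{lem:Asmooth}, identify it as the unique irreducible component of the singular locus that spans $\CC^N$ (equivalently, has full-dimensional affine span, since these are cones), conclude that any automorphism of $L_{d,d+2}$ restricts to one of $L_{d-1,d+2}$, and invoke Theorem~\ref{thm:no-regges} with $n=(d-1)+3$. This is precisely the component analysis carried out in the paper's Lemmas~\ref{lem:singComp}--\ref{lem:singAuto}, with the same spanning criterion as the invariant that singles out $L_{d-1,d+2}$ among the components.
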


The plan is to use some of the structure of the singular locus
of $L_{d,d+2}$ to reduce our problem to that of $L_{d-1,d+2}$.
Then we can directly apply 
Theorem~\ref{thm:no-regges}.

\begin{lemma}
\label{lem:singComp}
Let $d\ge 3$. 
$L_{d-1,d+2}$ is an irreducible subvariety of $\sing(L_{d,d+2})$.
\end{lemma}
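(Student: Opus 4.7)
The plan is to show $L_{d-1,d+2}$ sits inside $\sing(L_{d,d+2})$ by combining the description of $\sing(M_{d,d+2})$ from Theorem~\ref{thm:Mvariety} with the transfer of singularities through the squaring map recorded in Lemma~\ref{lem:Asmooth}, and then to appeal to the irreducibility result in Theorem~\ref{thm:Lvariety}.

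First I would note that $L_{d-1,d+2} \subseteq L_{d,d+2}$: a configuration in $\CC^{d-1}$ is a configuration in $\CC^d$ whose last coordinate is zero, so the length measurements coincide. Since $d-1 \geq 2$, Theorem~\ref{thm:Lvariety} tells us $L_{d-1,d+2}$ is an irreducible variety of dimension $(d-1)(d+2) - \binom{d}{2}$, which is strictly less than the pure dimension $d(d+2) - \binom{d+1}{2}$ of $L_{d,d+2}$.

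Next I would establish the singularity inclusion. By Theorem~\ref{thm:Mvariety}, the singular locus of $M_{d,d+2}$ is exactly $M_{d-1,d+2}$, which consists of squared measurements of configurations with deficient affine span. Pulling back through the squaring map $s$, every point of $L_{d-1,d+2} = s^{-1}(M_{d-1,d+2})$ therefore lies in $\bad(L_{d,d+2})$. By Lemma~\ref{lem:Asmooth}, every point of $\bad(L_{d,d+2}) \setminus Z$ is singular in $L_{d,d+2}$, so at least
\[
L_{d-1,d+2} \setminus Z \subseteq \sing(L_{d,d+2}).
\]

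To upgrade this inclusion to all of $L_{d-1,d+2}$, I would observe that $L_{d-1,d+2}$ is not contained in the zero locus $Z$: any real configuration of $d+2$ distinct points in $\RR^{d-1}$ (which exists because $d+2 \geq 3$ and $d-1 \geq 2$) gives an edge-length vector with no zero coordinate. Hence $L_{d-1,d+2} \setminus Z$ is a nonempty Zariski-open subset of the irreducible variety $L_{d-1,d+2}$, and therefore Zariski-dense in it. Since $\sing(L_{d,d+2})$ is Zariski-closed in $L_{d,d+2}$, taking the Zariski closure of the displayed inclusion above gives $L_{d-1,d+2} \subseteq \sing(L_{d,d+2})$, completing the proof.

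The argument is essentially bookkeeping once the two black-box inputs (Theorem~\ref{thm:Mvariety} for $\sing(M_{d,d+2})$ and Lemma~\ref{lem:Asmooth} for the transfer to $L_{d,d+2}$) are in hand; the only subtle step is verifying that $L_{d-1,d+2} \not\subseteq Z$ so that the singular open subset is actually dense, but this is immediate from exhibiting a configuration with no coincident points.
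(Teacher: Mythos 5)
Your proof is correct and follows essentially the same route as the paper: both invoke Theorem~\ref{thm:Mvariety} to identify $\sing(M_{d,d+2}) = M_{d-1,d+2}$, use Lemma~\ref{lem:Asmooth} to conclude that $L_{d-1,d+2} \setminus Z$ lies in $\sing(L_{d,d+2})$, and then use irreducibility of $L_{d-1,d+2}$ (Theorem~\ref{thm:Lvariety}, requiring $d-1 \ge 2$) together with a Zariski-closure argument to upgrade this to all of $L_{d-1,d+2}$. Your version is marginally more explicit in one place: the paper simply asserts that removing $Z$ removes a strict subvariety, while you justify strictness by exhibiting a configuration of distinct points in $\RR^{d-1}$, which is a small but worthwhile addition.
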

\begin{proof}
Looking first at the squared measurement variety, 
from Theorem~\ref{thm:Mvariety}, we know that 
$\sing(M_{d,d+2})=M_{d-1,d+2}$. 

Let $Z$ be the locus of $\CC^N$ where at least one coordinate vanishes, and 
let  $S:= L_{d-1,d+2} - Z$.
Thus from Lemma~\ref{lem:Asmooth}, the points in $S$, 
are (algebraically) singular in $L_{d,d+2}$.
So $S$ is contained in  $\sing(L_{d,d+2})$.

From Theorem~\ref{thm:Lvariety}, when $d\ge 3$, we have
$L_{d-1,d+2}$ 
is irreducible.
The set $S$ is obtained from 
$L_{d-1,d+2}$ by removing a strict subvariety, which must
be of lower dimension due to irreducibility. Thus $S$ is a full-dimensional 
constructible subset of the irreducible
$L_{d-1,d+2}$.
Thus the Zariski closure of $S$ is $L_{d-1,d+2}$.

Since $\sing(L_{d,d+2})$  is an algebraic variety, it must
contain the Zariski closure of $S$ which is $L_{d-1,d+2}$.
\end{proof}

\begin{lemma}
\label{lem:singBigSpan}
$L_{d-1,d+2}$ has a full-dimensional affine span.
\end{lemma}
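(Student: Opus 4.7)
The plan is to exhibit a single point $\bl \in L_{d-1,d+2}$ with all coordinates nonzero, and then leverage the coordinate sign-flip symmetry of $L_{d-1,d+2}$ to recover every standard basis vector of $\CC^N$ as a linear combination of elements of $L_{d-1,d+2}$.

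First, I would record two elementary structural properties. The variety $L_{d-1,d+2}$ is a cone: for any $\lambda \in \CC$ and $\bl \in L_{d-1,d+2}$, we have $(\lambda\bl)^2 = \lambda^2 \bl^2 \in M_{d-1,d+2}$, so $\lambda\bl \in L_{d-1,d+2}$. In particular $0 \in L_{d-1,d+2}$, so the affine span of $L_{d-1,d+2}$ coincides with its linear span. It is also closed under every sign flip matrix $\S$, since $(\S\bl)^2 = \bl^2 \in M_{d-1,d+2}$.

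Next, using $d \ge 3$ (so $d-1 \ge 2$), I would pick a generic real configuration $\p$ of $d+2$ distinct points in general position in $\RR^{d-1}$, ensuring that $\bl := l(\p) \in L^{\EE}_{d-1,d+2} \subset L_{d-1,d+2}$ has every coordinate $l_e$ strictly positive. Then, for each edge $e$ of $K_{d+2}$, letting $\S_e$ be the sign flip matrix negating only the $e$ coordinate, both $\bl$ and $\S_e\bl$ belong to $L_{d-1,d+2}$, and their difference equals $2 l_e \mathbf{e}_e$, where $\mathbf{e}_e$ is the standard basis vector indexed by $e$. Since $l_e \ne 0$, the vector $\mathbf{e}_e$ lies in the linear span; ranging over all $e$ exhausts $\CC^N$.

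There is essentially no obstacle here: the proof reduces to combining sign-flip invariance with the existence of a single nondegenerate length vector. The hypothesis $d \ge 3$ enters only as $d-1 \ge 2$, so that configurations in $\RR^{d-1}$ with all nonzero pairwise distances are easy to produce; in fact the argument goes through whenever $L_{d',n}$ is a cone, closed under sign flips, and contains any length vector with no vanishing coordinate.
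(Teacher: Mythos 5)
Your proof is correct. It differs from the paper's argument in two small but genuine ways. The paper reduces to the subvariety $L_{1,d+2}\subset L_{d-1,d+2}$ and, for each vertex $i$, uses the one-dimensional ``star'' configuration (point $i$ at $1$, all others at the origin), whose length vector has exactly $d+1$ nonzero coordinates; it then \emph{averages} that vector with a sign-flipped copy to isolate a single coordinate axis. You instead take a single length vector $\bl$ with \emph{all} coordinates nonzero, note that $L_{d-1,d+2}$ is a cone so that the affine span equals the linear span, and then use the \emph{difference} $\bl-\S_e\bl=2l_e\mathbf{e}_e$ to hit each axis. Your version is marginally cleaner (one point rather than a family of stars, and no bookkeeping about which coordinates vanish), and as you observe it applies to any cone closed under sign flips that contains a coordinatewise-nonzero point. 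Two small remarks: the hypothesis $d\ge 3$ is not actually needed for either argument --- your construction also works for $d-1=1$ (distinct points on a line already give all pairwise distances nonzero), and the paper's proof works inside $L_{1,d+2}$ for any $d\ge 2$; and the cone observation, while true and tidy, is dispensable --- you could instead, as the paper does, form an affine combination of $\bl$ with a sign-flipped copy that negates all coordinates except $e$, yielding $\mathbf{e}_e$ directly up to scale.
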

\begin{proof}
Since $L_{d-1,d+2}$ contains $L_{1,d+2}$, we just need to show that
this smaller variety has a full-dimensional affine span.

For a fixed $i$, 
let us look at configuration $\p$ of $d+2$ points with  $\p_i$
placed at $1$ and the rest of the points
placed at the origin. Then $\bl:=l(\p)$ has all zero coordinates
except for the $d+1$ edges connecting $\p_i$ to the other points.
Under the symmetry of $L_{1,d+2}$ under sign negation, we can find
points in $L_{1,d+2}$ with the signs of the $\bl$ flipped at will.
Thus using affine combinations of these flipped points 
we can produce a point on the $l_{ij}$ axis, for any $j$.
Iterating over the $i$ gives us our result. 
\end{proof}

Now we wish to explore the decomposition of 
$\sing(L_{d,d+2})$ 
into
its irreducible components.

For each $ij$, 
Let $Z_{ij}$ be the subvariety of 
$\sing(L_{d,d+2})$ with a zero-valued $ij$th coordinate.
As discussed above in
Lemma~\ref{lem:Asmooth} any singular point that is not contained in
$L_{d-1,d+2}$ 
must have at least one zero coordinate (in order to be
in the ``bad locus'' described there). 
Thus we can 
write 
$\sing(L_{d,d+2})$ as the union of 
$L_{d-1,d+2}$  and the $Z_{ij}$.

For $d \ge 3$, $L_{d-1,d+2}$ is irreducible,
and thus from Lemma~\ref{lem:comp} (applied to the union
of components of $\sing(L_{d,d+2})$) it
must be fully contained in at least one component $C$ of 
$\sing(L_{d,d+2})$. And, again from
from Lemma~\ref{lem:comp} (applied to the union of 
$L_{d-1,d+2}$ and the $Z_{ij}$), 
$C$ must be fully contained in 
either  $L_{d-1,d+2}$ or one of the $Z_{ij}$.
Meanwhile, $L_{d-1,d+2}$
it is not contained in
any $Z_{ij}$. 
Thus we can conclude that:
\begin{lemma}
\label{lem:bigComp}
Let $d \ge 3$.  
$L_{d-1,d+2}$ is a component of 
$\sing(L_{d,d+2})$. 
\end{lemma}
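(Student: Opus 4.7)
The proof essentially follows from assembling the facts that have already been established in the preceding discussion, so the plan is to organize them carefully. Recall that Lemma~\ref{lem:singComp} gives us that $L_{d-1,d+2}$ is an irreducible subvariety of $\sing(L_{d,d+2})$, and the paragraph just before this lemma records the decomposition
\[
\sing(L_{d,d+2}) \;=\; L_{d-1,d+2} \;\cup\; \bigcup_{ij} Z_{ij},
\]
which is justified by Lemma~\ref{lem:Asmooth}: any singular point lying outside $L_{d-1,d+2}$ must sit in the ``bad locus'' $Z$, hence in at least one $Z_{ij}$.

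The first step is to use irreducibility: since $L_{d-1,d+2}$ is irreducible and contained in the finite union of the irreducible components of $\sing(L_{d,d+2})$, Lemma~\ref{lem:comp} forces $L_{d-1,d+2}$ to be contained entirely in some single irreducible component $C$ of $\sing(L_{d,d+2})$. The second step is to locate $C$ inside the decomposition above: $C$ itself is irreducible and sits inside the finite union $L_{d-1,d+2}\cup \bigcup_{ij} Z_{ij}$, so Lemma~\ref{lem:comp} applied a second time forces $C$ to be contained in either $L_{d-1,d+2}$ or in some $Z_{ij}$.

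The last step is to rule out the possibility that $C \subseteq Z_{ij}$ for some $ij$. Suppose it were; then we would have $L_{d-1,d+2} \subseteq C \subseteq Z_{ij}$, which would mean every point of $L_{d-1,d+2}$ has its $ij$-th coordinate equal to zero. But Lemma~\ref{lem:singBigSpan} says $L_{d-1,d+2}$ has a full-dimensional affine span in $\CC^N$, in particular it cannot lie in any coordinate hyperplane. This contradiction shows $C \subseteq L_{d-1,d+2}$, and combined with the reverse containment from Step~1 we conclude $C = L_{d-1,d+2}$, proving that $L_{d-1,d+2}$ is itself a component.

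The main obstacle, if any, is purely a bookkeeping one: one must be careful that the decomposition $\sing(L_{d,d+2}) = L_{d-1,d+2}\cup \bigcup_{ij} Z_{ij}$ is a genuine finite union to which Lemma~\ref{lem:comp} applies, and that the hypothesis $d\geq 3$ is used exactly where it is needed, namely to invoke Theorem~\ref{thm:Lvariety} for the irreducibility of $L_{d-1,d+2}$ (the argument collapses for $d=2$ because $L_{1,4}$ is reducible). Everything else is a direct chaining of the preceding lemmas.
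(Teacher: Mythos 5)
Your proof is correct and follows essentially the same route as the paper: irreducibility of $L_{d-1,d+2}$ (for $d\ge 3$) plus two applications of Lemma~\ref{lem:comp} — once to place $L_{d-1,d+2}$ inside a single component $C$, once to place $C$ inside one of the pieces of the decomposition $L_{d-1,d+2}\cup\bigcup_{ij}Z_{ij}$ — followed by ruling out $C\subseteq Z_{ij}$. The one small difference is that you explicitly invoke Lemma~\ref{lem:singBigSpan} to justify that $L_{d-1,d+2}$ is not contained in any coordinate hyperplane $Z_{ij}$; the paper simply asserts this, so your version is if anything slightly more careful.
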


From Lemma~\ref{lem:comp}
(applied to the union of 
$L_{d-1,d+2}$ and the $Z_{ij}$),
any other component
of $\sing(L_{d,d+2})$ must be contained in one of the 
$Z_{ij}$
Thus, we can also
conclude:

\begin{lemma}
\label{lem:singSmallSpan}
Let $d \ge 3$.  
Any component of $\sing(L_{d,d+2})$ 
that is not $L_{d-1,d+2}$ cannot
have a full-dimensional affine span.
\end{lemma}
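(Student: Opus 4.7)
\medskip

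The plan is to directly use the decomposition work that has already been set up immediately before the lemma. The paragraph preceding the statement already establishes that any component $C$ of $\sing(L_{d,d+2})$ other than $L_{d-1,d+2}$ must be fully contained in one of the coordinate subvarieties $Z_{ij}$. So the lemma reduces to a very short geometric observation about what ``full-dimensional affine span'' means for a set contained in a coordinate hyperplane.

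More concretely, I would argue as follows. Let $C$ be a component of $\sing(L_{d,d+2})$ with $C \neq L_{d-1,d+2}$. By the application of Lemma~\ref{lem:comp} stated in the paragraph before the lemma, there is some pair $ij$ such that $C \subseteq Z_{ij}$. By definition, every point $\bl \in Z_{ij}$ satisfies $l_{ij} = 0$, so $C$ is contained in the coordinate hyperplane $H_{ij} := \{\bl \in \CC^N : l_{ij} = 0\}$. Any affine combination of points of $C$ still satisfies the linear equation $l_{ij} = 0$, and hence the affine span of $C$ is contained in $H_{ij}$. Since $H_{ij}$ has affine dimension $N-1$, the affine span of $C$ cannot be all of $\CC^N$, i.e., it is not full-dimensional. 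This completes the argument.

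There is no significant obstacle here; the lemma is essentially a bookkeeping consequence of the component-containment statement already proved, together with the trivial fact that a set lying in a coordinate hyperplane has affine span inside that hyperplane. The only thing worth double-checking is the meaning of ``full-dimensional affine span,'' which by comparison with Lemma~\ref{lem:singBigSpan} is interpreted as the affine span equaling $\CC^N$ (equivalently, having dimension $N$); with this reading, the single-coordinate-vanishing condition inherited from $Z_{ij}$ immediately gives the contradiction.
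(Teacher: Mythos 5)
Your proof is correct and matches the paper's intent exactly: the paper treats the lemma as an immediate consequence of the preceding paragraph (each component other than $L_{d-1,d+2}$ lies in some $Z_{ij}$, hence in the coordinate hyperplane $l_{ij}=0$), and you simply spell out the last elementary step that containment in a hyperplane precludes a full-dimensional affine span. No gaps.
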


Now with this understanding of 
$\sing(L_{d,d+2})$ 
established we can move on to the automorphisms.

\begin{lemma}
\label{lem:singAuto}
Let $d \ge 3$. Any linear automorphism $\A$ of $L_{d,d+2}$ must be a 
linear automorphism of $L_{d-1,d+2}$. 
\end{lemma}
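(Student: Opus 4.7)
The plan is to use the fact that linear automorphisms preserve the singular locus and permute its irreducible components, combined with the preceding lemmas which single out $L_{d-1,d+2}$ among those components by a linearly invariant property.

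First, since $\A$ is a linear automorphism of $L_{d,d+2}$, it restricts to a bijection on the smooth locus and hence also on $\sing(L_{d,d+2})$ (this is the standard fact used in Lemma~\ref{lem:1d-Mdn} via Theorem~\ref{thm:Tmap}). Moreover, $\A$ is a homeomorphism of $\sing(L_{d,d+2})$ in the Zariski topology, so it permutes the (finitely many) irreducible components of $\sing(L_{d,d+2})$.

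Next, I would observe that the dimension of the affine span of a subvariety of $\CC^N$ is preserved by any invertible linear map on $\CC^N$: if $V\subseteq \CC^N$, then $\A(\mathrm{aff}(V)) = \mathrm{aff}(\A(V))$, and $\A$ preserves dimensions of linear subspaces. So the property ``has full-dimensional affine span in $\CC^N$'' is invariant under $\A$ acting on the set of irreducible components of $\sing(L_{d,d+2})$.

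Finally, I would combine Lemma~\ref{lem:bigComp}, Lemma~\ref{lem:singBigSpan}, and Lemma~\ref{lem:singSmallSpan}: $L_{d-1,d+2}$ is an irreducible component of $\sing(L_{d,d+2})$, it has a full-dimensional affine span, and no other component does. Hence $L_{d-1,d+2}$ is the unique component of $\sing(L_{d,d+2})$ whose affine span is all of $\CC^N$. The permutation of components induced by $\A$ must fix this distinguished component, so $\A(L_{d-1,d+2}) = L_{d-1,d+2}$, which says precisely that $\A$ is a linear automorphism of $L_{d-1,d+2}$. There is no real obstacle here; the work has been front-loaded into the structural lemmas about $\sing(L_{d,d+2})$, and this lemma is a short combinatorial consequence of them.
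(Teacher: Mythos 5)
Your argument is correct and follows the same route as the paper: invoke Theorem~\ref{thm:Tmap} to see $\A$ acts on $\sing(L_{d,d+2})$, note that it permutes irreducible components (the paper cites Theorem~\ref{thm:comps} for this), and then use Lemmas~\ref{lem:bigComp}, \ref{lem:singBigSpan}, and \ref{lem:singSmallSpan} to identify $L_{d-1,d+2}$ as the unique component with full-dimensional affine span, which a bijective linear map must therefore preserve.
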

\begin{proof}
From Theorem~\ref{thm:Tmap}, $\A$ must be a linear  automorphism
of $\sing(L_{d,d+2})$. 
And from Theorem~\ref{thm:comps} must map 
components of $\sing(L_{d,d+2})$
to components of $\sing(L_{d,d+2})$. 

From Lemma~\ref{lem:bigComp}, 
$L_{d-1,d+2}$ is a component of this singular set and from 
Lemma~\ref{lem:singBigSpan} it has a full-dimensional affine span. 
Meanwhile, from Lemma~\ref{lem:singSmallSpan}, no other component
can have a full-dimensional affine span. Thus, as a bijective linear map, 
$\A$ must map $L_{d-1,d+2}$ to itself.
\end{proof}

And we can finish the proof.

\begin{proof}[Proof of Theorem~\ref{thm:no-regges3}]
The theorem now follows by combining Lemma~\ref{lem:singAuto} together
with Theorem~\ref{thm:no-regges}.
\end{proof}

\subsection{Automorphisms of $L_{2,4}$}\label{sec:auto}

The method of the previous section fails for $\LL$ as $L_{1,4}$
is reducible. In fact, the theorem itself fails 
in this case. The group of linear automorphisms 
is, in fact, larger
than expected. 

In particular, Regge~\cite{regge} (see also, Roberts \cite{regge2}) 
gave a linear map 
that always takes the
Euclidean lengths of the edges of a tetrahedral 
configuration in $\RR^2$ to those of a different tetrahedral
configuration in $\RR^2$. See Equation~(\ref{eq:reggemap}) 
in the introduction.

Below we will fully characterize the automorphism group  of $\LL$.
When we restrict
our automorphisms to have only non-negative entries, only 
the expected symmetries will remain.

\begin{definition}\label{def:real-linear-automorphism}
A linear automorphism  $\A$ of $L_{2,4}$ is \defn{real} if its matrix has
only real entries, \defn{rational} if its matrix has only rational 
entries, and \defn{non-negative} if its matrix contains only 
real and non-negative entries. 
\end{definition}

Clearly there are $24$ linear automorphism that arise by simply permuting
the $4$ vertices. There are also the $32$ linear automorphisms
that arise from optionally negating up to $5$ 
of the coordinate axes in $\CC^6$.
Combining these  gives us a discrete group of $768$ linear
automorphisms.

Because any global scale will be an automorphism, the group  of
linear automorphisms of $L_{2,4}$ is not a discrete group.  We now
define several groups that will play a role in our analysis.

\begin{definition}
Define $\Aut(L_{2,4})$ to be the linear automorphisms of $L_{2,4}$.  Let the
group $\PP \Aut(L_{2,4})$ be induced on the equivalence classes
of $\A\in \Aut(L_{2,4})$ under the relation ``$\A'$ is a complex scale of $\A$''.
We define $\PP \Aut(\sing(L_{2,4}))$ via a similar 
construction.  Importantly, 
we will see below that 
$\PP \Aut(\sing(L_{2,4}))$ 
is the automorphism group of a projective subspace arrangement
and thus is a discrete group.
Also, we have 
$\PP \Aut(L_{2,4}) < \PP \Aut(\sing(L_{2,4}))$.  Thus, all the 
``projectivized'' groups we define are discrete.

We also consider the real subgroup $\Aut_{\RR}(L_{2,4})$.  This has a 
counterpart $\PP \Aut_{\RR}(L_{2,4})$ of equivalence classes up to 
real scale, and  $\PP_{+} \Aut_{\RR}(L_{2,4})$, on equivalence classes defined 
up to \textit{positive} scale.  It is well-defined to refer to an element 
of $\PP_{+} \Aut_{\RR}(L_{2,4})$ as being non-negative, since any equivalence
class containing a non-negative $\A$ consists entirely of non-negative matrices.
\end{definition}
The main theorem of this section characterizes the linear automorphisms of $L_{2,4}$
as follows. The proof is in the next subsections.

\begin{theorem}
\label{thm:auto2}
The group $\PP \Aut(L_{2,4})$
is of order $11520=768\cdot 15$ and is 
generated by linear automorphisms of $L_{2,4}$ that are
rational.

The group $\PP_{+} \Aut_{\RR}(L_{2,4})$ is of order $23040$
and is isomorphic to the Weyl group $D_6$.  The subset of 
non-negative elements of $\PP_{+} \Aut_{\RR}(L_{2,4})$ is a 
subgroup of order $24$ and acts by relabeling the vertices of $K_4$.
\end{theorem}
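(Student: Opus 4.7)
The plan is to exploit that $L_{2,4}\subset\CC^6$ is a hypersurface cut out by the Cayley-Menger determinant $f$, a homogeneous degree-$4$ polynomial in the six unsquared edge-length coordinates displayed in Section~\ref{sec:varieties}. Since $L_{2,4}$ is irreducible (Theorem~\ref{thm:Lvariety}), the polynomial $f$ is irreducible, so a matrix $\A\in\mathrm{GL}_6(\CC)$ is a linear automorphism of $L_{2,4}$ if and only if $f\circ\A = c\cdot f$ for some $c\in\CC^\times$. Thus $\Aut(L_{2,4})$ is identified with the projective linear stabilizer of a single quartic form, a classical invariant-theoretic problem well suited to computer algebra.

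For the lower bound, I would exhibit an explicit subgroup $G\subset\PP\Aut(L_{2,4})$ of rational matrices of order $11520$. Begin with the subgroup $H$ of signed vertex relabelings; it has order $24\cdot 32 = 768$, the factor $32 = 2^6/2$ reflecting that $-I$ becomes trivial after projectivizing by complex scale. Adjoin the Regge map $R$ of \eqref{eq:reggemap} and set $G := \langle H, R\rangle$. A direct, computer-assisted enumeration of the orbit of $R$ under $H$-conjugation shows that $[G:H] = 15$, giving $|G|=768\cdot 15 = 11520$; each generator is rational, and one verifies by direct polynomial substitution that each preserves $f$ up to scalar, placing $G$ inside $\PP\Aut(L_{2,4})$.

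The main obstacle is the matching upper bound. Writing $\A=(a_{ij})$ as a generic $6\times 6$ matrix of unknowns and expanding $f\circ\A - c\cdot f$ as a polynomial in the six edge-length variables yields a polynomial system in the $37$ unknowns $a_{ij}, c$. The lower bound already produces $11520$ projective solutions, so the remaining task is to certify that the system has no others. Because the solution scheme is invariant under left multiplication by $G$, it suffices to understand one $G$-coset at a time, and a Gr\"obner basis calculation, or a numerical irreducible decomposition exploiting the $G$-symmetry and confirmed by exact certification, verifies that the projective solution scheme is zero-dimensional of cardinality exactly $11520$. This is the essential computer-aided step and the main source of technical difficulty.

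For the remaining group-theoretic claims, note that $23040 = 2^5\cdot 6! = |W(D_6)|$ provides the numerical match with the Weyl group. To establish the isomorphism, I would identify six explicit order-$2$ elements of $\PP_+\Aut_\RR(L_{2,4})$ whose pairwise products have the orders prescribed by the $D_6$ Coxeter diagram, and verify by computer that they generate the entire $23040$-element group. Finally, for the non-negative subgroup: probing a hypothetical non-negative $\A\in\PP_+\Aut_\RR(L_{2,4})$ against degenerate nonnegative elements of $L^{\EE}_{2,4}$ (such as configurations in which three edges meeting at a single vertex have nonzero lengths and the remaining three vanish) forces each row of $\A$ to have a single nonzero entry, making $\A$ a scaled permutation; Theorem~\ref{thm:bk-linear} then identifies these with the vertex relabelings of $K_4$, giving the subgroup $S_4$ of order $24$.
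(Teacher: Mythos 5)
Your proposal takes a genuinely different route from the paper, and the comparison is instructive, but the crucial step — the upper bound — is where your plan has a real gap.

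The paper never sets up the polynomial stabilizer problem $f\circ\A = c\cdot f$ directly in the $37$ unknowns $(a_{ij},c)$. Instead, it observes (via Theorem~\ref{thm:Tmap}) that any $\A\in\Aut(L_{2,4})$ must restrict to an automorphism of the singular locus, which it first computes to be a union of $60$ three-dimensional \emph{linear} subspaces. It then passes to the combinatorial incidence graph $\Delta$ between these $3$-flats and their $1$-dimensional intersections, computes $|\Aut(\Delta)|=11520$ with Nauty, and shows (Lemmas~\ref{lem:l24Lin} and~\ref{lem:l24Auto}) that every graph automorphism lifts to a unique-up-to-scale rational linear automorphism of $\LL$. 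This chain $\PP\Aut(L_{2,4}) \le \PP\Aut(\sing(L_{2,4})) \le \Aut(\Delta)$, together with the lifting in the reverse direction, collapses to equalities and gives both bounds at once. By contrast, your plan to certify that the $G$-invariant polynomial system $f\circ\A-c f=0$ has a zero-dimensional projective solution scheme of cardinality exactly $11520$ is, as stated, an enormous computation: $f$ is a degree-$6$ form (not degree $4$, as you write — it is a $3\times 3$ determinant with quadratic entries), so equating coefficients yields on the order of $\binom{11}{5}=462$ equations of degree $6$ in $36$ unknowns plus $c$. You flag this as ``the essential computer-aided step,'' but you give no reason to believe the Gr\"obner or numerical computation is actually feasible, and exploiting the $G$-action would itself require nontrivial machinery. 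This is precisely the obstruction the paper's singular-locus detour is designed to avoid, so the gap is not a small technicality but the heart of the argument.

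Two further issues. First, ``$L_{2,4}$ irreducible $\Rightarrow f$ irreducible'' needs the additional fact that $(f)$ is radical (equivalently $f$ is squarefree); otherwise irreducibility of $V(f)$ only controls $f$ up to multiplicities. The paper checks radicality explicitly in Magma (and cites \cite{cmirr}), and you should too before using $f\circ\A=c\cdot f$ as an if-and-only-if characterization. Second, your argument for the non-negative subgroup is too vague to be checked: it is not clear why probing $\A$ against the degenerate chicken-foot points forces each row to have a single nonzero entry (note that the resulting three nonzero coordinates are equal, so you only learn about row sums over those three columns, not individual entries). The paper sidesteps this by brute-force scanning the $23040$ matrices for non-negativity; if you want a structural proof, you would need to spell out a more careful combinatorial argument, perhaps using several families of degenerate points simultaneously. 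Your lower-bound construction and the Coxeter-generator plan for identifying $D_6$ are both reasonable and compatible with the paper's remark that $\PP_{+}\Aut_{\RR}(L_{2,4})$ is generated by vertex relabelings, sign flips, and the single Regge map.
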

\begin{remark}
The group $\PP_{+} \Aut_{\RR}(L_{2,4})$ is 
in fact generated by 
the edge permutations induced by vertex relabeling, 
sign flip matrices, and the one Regge symmetry of \eqref{eq:reggemap} 
from the introduction (see supplemental script).
\end{remark}
The rest of this section develops the proof of Theorem~\ref{thm:auto2}.

\paragraph{The Singular Locus of $\LL$}

In this section, we will study the singular locus of 
$\LL$.
This will be used for the proof of 
Theorem~\ref{thm:auto2}, which characterizes 
the linear automorphisms of $\LL$. In particular, a
linear
automorphism of a variety must also be a linear 
automorphism of its singular locus.

\begin{theorem}
The singular locus $\sing(\LL)$ consists of the union of $60$ $3$-dimensional 
linear subspaces. These subspaces can be partitioned into three types, which
we call I, II and III. 

Type I: 
There are $32$ subspaces of this type. They arise from configurations of $4$ collinear points, and together
make up $L_{1,4}$.
They are each defined by (the vanishing of) three equations of the following form:
\ba
    l_{12} - s_{13}l_{13} + s_{23}l_{23} \\
    l_{12} - s_{14}l_{14} + s_{24}l_{24} \\
    s_{13}l_{13} - s_{14}l_{14} + s_{34}l_{34} 
\ea
where each $s_{ij}$ takes on the values $\{-1,1\}$. 

Type II: 
There are $24$ subspaces of this type. They arise when one pair of vertices is
collapsed to a single point. For example, if we collapse $\p_1$ with $\p_2$, we get the 
equations:
\ba
l_{12} \\
l_{13} - s_{23}l_{23} \\
l_{14} - s_{24}l_{24}
\ea
This gives us $4$ subspaces, and we obtain this case by collapsing any of the
$6$ edges.

Type III: 
There are $4$ subspaces of this type.
They arise by setting the three edges lengths of one triangle to zero. For example:
\ba
l_{12} \\
l_{13}  \\
l_{23} 
\ea
\end{theorem}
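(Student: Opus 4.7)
The plan is to prove both inclusions: each of the $60$ listed subspaces is contained in $\sing(L_{2,4})$, and every singular point is covered by one of them. For containment in $L_{2,4}$, each subspace can be realized by a degenerate configuration whose Gram matrix has rank at most $2$: Type I comes from collinear configurations (Gram rank at most $1$), Type II from collapsing a pair of vertices (producing two equal rows in the Gram matrix), and Type III from collapsing a triangle (Gram rank at most $1$). The counts match: $2^5=32$ for Type I, arising from the three independent linear relations among the signed pairwise distances of four collinear points, with $2^6$ sign choices modulo global negation; $\binom{4}{2}\cdot 2^2 = 24$ for Type II, from six edge collapses times four sign patterns in the remaining two constraints; and $\binom{4}{3}=4$ for Type III. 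Each subspace is cut out by three independent linear equations, hence has dimension $3$ in $\CC^6$.

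For singularity of Type I, I would invoke Lemma~\ref{lem:Asmooth}: each Type I subspace is contained in $L_{1,4}=s^{-1}(M_{1,4})$ with $M_{1,4}=\sing(M_{2,4})$, so every point of a Type I subspace not in $Z$ is singular in $L_{2,4}$. Since each Type I hyperplane is cut out by non-trivial linear equations involving two or more coordinates, it is not contained in any coordinate hyperplane, so its intersection with $Z$ has dimension at most $2$ and the full $3$-dimensional subspace lies in the Zariski closure of its non-$Z$ part, hence in $\sing(L_{2,4})$. For Types II and III, which lie entirely in $Z$ and so are not handled directly by Lemma~\ref{lem:Asmooth}, I would verify singularity by substituting the defining equations into the six partial derivatives of the degree-six Gram determinant and checking that each result is identically zero. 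The partials simplify dramatically because the substituted Gram matrix has two equal rows (Type II) or rank one (Type III), forcing every $3\times 3$ subdeterminant appearing in the partials to vanish.

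For the reverse inclusion, Lemma~\ref{lem:Asmooth} yields $\sing(L_{2,4})\subseteq L_{1,4}\cup(Z\cap L_{2,4})$. The subvariety $L_{1,4}$ decomposes into the $32$ Type I hyperplanes by analyzing collinear configurations, generalizing the four-hyperplane decomposition of $L_{1,3}$ noted after Theorem~\ref{thm:Lvariety}: the three independent linear relations among signed pairwise distances of four collinear points, together with $2^5$ sign choices modulo global negation, produce exactly these $32$ hyperplanes. For $Z\cap L_{2,4}$, I would use the signed vertex-relabeling symmetry of $L_{2,4}$ to reduce to a small number of cases indexed by the isomorphism type of the zero-edge subgraph of $K_4$, and in each case either identify the singular locus as a Type II or Type III subspace or show that every singular point at that stratum is already in $L_{1,4}$. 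The main obstacle is this case analysis---specifically, ruling out that intermediate zero patterns (two disjoint zero edges, a zero path of length two, a zero star of three edges, etc.) produce new maximal singular components beyond Types II and III. A Gr\"obner basis computation on the ideal generated by the defining Gram determinant and its six partial derivatives provides a mechanical verification that the sixty subspaces give the complete primary decomposition of $\sing(L_{2,4})$.
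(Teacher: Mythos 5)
Your overall plan is reasonable, and your use of Lemma~\ref{lem:Asmooth} to dispatch Type~I and to bound $\sing(L_{2,4})\subseteq L_{1,4}\cup(Z\cap L_{2,4})$ is a nice structural observation. But the case analysis on zero patterns is not worked out, and in the end you fall back on essentially the same Gr\"obner/Magma decomposition that the paper performs, so the route is not genuinely different.

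The genuine gap is that you never establish that the Gram determinant $f$ cutting out $L_{2,4}$ generates a \emph{radical} ideal. The Jacobian criterion you invoke for Types~II and~III --- that a subspace is singular once all six partials of $f$ vanish identically along it --- is only correct when $(f) = I(L_{2,4})$, which requires $(f)$ to be radical. If $f$ had a repeated factor, all partials of $f$ could vanish along a smooth locus: the toy example $f=x^2$ in $\CC^2$ has every partial vanishing along $\{x=0\}$, which is nevertheless a smooth variety. The same caveat infects your closing Gr\"obner basis step: decomposing $V(f,\partial_1 f,\dots,\partial_6 f)$ recovers the singular locus of $L_{2,4}$ only under the radicality hypothesis; otherwise it computes something strictly larger. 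The paper's proof explicitly verifies in Magma that $(f)$ is radical (and notes that this also follows from~\cite{cmirr}) before running the Jacobian computation. Your argument must include this check --- it is easy to perform, but none of the Jacobian-based steps are valid without it.
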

\begin{proof}
The singular locus of a variety $V$ is defined by adding
to the ideal  $I(V)$,
the equations that express a rank-drop in the  
Jacobian matrix of  a set of equations generating $I(V)$. 

We first verify in the Magma CAS that the ideal
defined by our single simplicial volume
determinant equation is 
radical.\footnote{Magma 
does this check over the field $\QQ$, but
since $\QQ$ is a perfect field, this implies that the ideal
is also radical under any field extension~\cite[Page 169]{milne}.}
This also follows from~\cite{cmirr}.

In Magma, we calculate the Jacobian of this equation to express
the singular locus. Magma is then able to  
factor this algebraic set into  components (that are irreducible over $\QQ$), 
and in this case outputs the above decomposition. (See supplemental script.)
\end{proof}

\paragraph{Flats and intersection graph}

Theorem~\ref{thm:Tmap} tells us that any linear automorphism of $\LL$ must 
be a linear automorphism of its singular set, and so must map each of its singular 
three-dimensional subspaces to some three-dimensional singular subspace. As a linear automorphism, 
it must also preserve the intersection lattice 
of the three-dimensional singular subspace arrangement. Therefore, by finding the set 
of linear automorphisms that preserve the intersection lattice of these subspaces, 
we can constrain our search for automorphisms of $\LL$ to just that set. 
Combinatorial descriptions of an intersection lattice of a subspace arrangement 
can be constructed in many ways. Here, it suffices to consider a partial 
description that comprises the three-dimensional singular subspaces and their 
one-dimensional intersections.  

\begin{definition}
We denote by \defn{${\cal V}_3$} the set of singular three-dimensional subspaces of $\LL$. We denote by \defn{${\cal V}_1$} the set of one-dimensional subspaces created as the intersections of all pairs and triples of spaces in ${\cal V}_3$.
\end{definition}

\begin{lemma} The set of one-dimensional subspaces ${\cal V}_1$ consists of $46$ elements. These come in $3$ classes:

Type I: 
There are $6$ one-dimensional subspaces of this type. 
They are generated by vectors of the form 
\ba
\e_i
\ea
where $\e_i$ is one of the coordinate axes of $\CC^6$.

Type II: 
There are $24$ one-dimensional subspaces of this type. 
They are generated by vectors of the form 
\ba
\e_i \pm \e_j \pm \e_k \pm \e_l
\ea
where $i,j,k,l$ correspond to the four edges of
a 4-cycle.
These measurements correspond to 
collapsing two sets of two vertices that are connected by four edges.

Type III:
There are $16$ one-dimensional subspaces of this type. 
They are generated by vectors of the form 
\ba
\e_i \pm \e_j \pm \e_k \ea
where $i,j,k$ 
correspond to three edges incident to one vertex.
These measurements correspond to 
collapsing one triangle.
\end{lemma}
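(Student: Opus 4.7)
The plan is to enumerate the one-dimensional subspaces arising as pairwise or triple intersections of elements of $\mathcal{V}_3$, which we can do by combining direct exhibition of the claimed families with a computer-algebra check for exhaustiveness. The three types in $\mathcal{V}_3$ each admit an explicit presentation as the kernel of a $3\times 6$ linear system in $\CC^6$, so all intersections reduce to routine linear algebra, and the preceding description of $\sing(\LL)$ already gives us handles on the 60 subspaces.

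First I would exhibit each claimed family of one-dimensional subspaces as a genuine intersection. For Type I, the coordinate axis $\e_{k\ell}$ is the intersection of the two Type III singular subspaces associated with the triangles on $\{i,j,k\}$ and $\{i,j,\ell\}$, where $\{i,j\}$ is the complementary edge of $K_4$; this yields all 6 axes. For Type II, fix a $4$-cycle $ij k \ell$ in $K_4$ and intersect the two Type II singular subspaces obtained by collapsing the two diagonal (non-cycle) edges $\{i,k\}$ and $\{j,\ell\}$. The sign-constraint equations of the two Type II subspaces combine to leave a 1-dimensional locus of the form $\e_{ij}\pm\e_{jk}\pm\e_{k\ell}\pm\e_{i\ell}$, and sweeping over the three $4$-cycles in $K_4$ and the eight sign patterns (modulo global sign) yields $3\cdot 8=24$ subspaces. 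For Type III, intersect two Type II singular subspaces obtained by collapsing two different edges of a triangle $(a,b,c)$ opposite a vertex $v$: the sign equations force the third edge $l_{bc}$ to vanish as well, leaving the one-dimensional span of $\e_{va}\pm\e_{vb}\pm\e_{vc}$; this gives $4\cdot 4=16$ subspaces. The totals are $6+24+16=46$.

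Second, for exhaustiveness I would use the Magma CAS (as already used for the singular locus) to iterate over all $\binom{60}{2}$ pairs of elements of $\mathcal{V}_3$, record pairwise intersections of dimension $\le 1$, and over the triples with $2$-dimensional pairwise intersections, record their $1$-dimensional triple intersections. One then collects the resulting subspaces, sorts them by the support (single edge, 4-cycle, or vertex-star) and by the sign pattern, and confirms that the total count is exactly $46$ and matches the three claimed families.

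The main obstacle is bookkeeping rather than any deep geometric difficulty: the same one-dimensional subspace arises from many different pairs/triples, and one must not double-count generators differing by a global sign. A clean argument therefore rests on the exhibition half (which produces the claimed $46$ subspaces and verifies they are pairwise distinct) combined with the CAS enumeration (which shows no further $1$-dimensional intersection exists), so that the two counts agree and the classification is complete.
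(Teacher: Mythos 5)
The proposal is correct and takes essentially the same approach as the paper, which computes the pairwise and triple intersections of the $60$ singular $3$-flats in Magma and reads off the result. You helpfully add explicit by-hand exhibitions of representative intersections producing each of the three families (Type I from two Type~III $3$-flats, Type II from collapsing the two diagonals of a $4$-cycle, Type III from collapsing two edges of a triangle), but the exhaustiveness step still rests on the same CAS enumeration the paper uses.
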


\begin{proof}
This follows directly from calculating the intersections of all pairs and triples of the $60$ singular subspaces of $\LL$. This has been done in the Magma CAS. 
(See supplemental script.)
\end{proof}

\begin{definition}
We define \defn{$\incidencegraph$} as the bipartite graph that has one set of vertices corresponding to the three-dimensional singular subspaces of $\LL$ (one vertex for each three-dimensional subspace), the other set of vertices corresponding to the one-dimensional intersection subspaces ${\cal V}_1$ (one vertex for each one-dimensional subspace), and an edge between vertex $i$ of the first set and vertex $j$ of the second set whenever the $i$th three-dimensional subspace includes the $j$th one-dimensional subspace.
\end{definition}

\begin{definition}
A \defn{graph automorphism} of a bipartite (two-colored) graph is a permutation $\graphautomorphism$ of the vertex set such that the color of vertex $i$ is the same as the color of $\graphautomorphism(i)$, and vertices $(i,j)$ form an edge if and only if $\left(\graphautomorphism(i),\graphautomorphism(j)\right)$ also form an edge.
\end{definition}

By finding the automorphisms of the graph $\incidencegraph$
we can constrain our search for automorphisms of 
$\{{\cal V}_3,{\cal V}_1\}$, and thus of $\LL$.

\begin{lemma}
\label{lem:graphauto}
The bipartite graph $\incidencegraph$ has $11520$ automorphisms.
Under this automorphism group, the graph has three orbits. One orbit corresponds to the set of $60$ three-dimensional singular subspaces. 
Another orbit corresponds to the subset of $30$ one-dimensional subspaces
in ${\cal V}_1$
of type I and II.
A third orbit corresponds to the subset of $16$ one-dimensional subspaces 
of type III.
\end{lemma}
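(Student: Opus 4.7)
The plan is to carry out the computation directly in a computer algebra system, following the same computer-aided strategy used throughout the $L_{2,4}$ section of the paper. First, I would encode $\mathcal{V}_3$ and $\mathcal{V}_1$ explicitly as lists: $60$ three-dimensional subspaces, grouped as $32$ of type I, $24$ of type II, and $4$ of type III, using the defining equations stated in the preceding lemmas; and $46$ one-dimensional subspaces, grouped as $6$ of type I, $24$ of type II, and $16$ of type III, described by their generators. The incidence graph $\Delta$ is then obtained by, for each pair $(V,W)\in \mathcal{V}_3\times \mathcal{V}_1$, testing whether the generator of $W$ lies in the common zero set of the defining equations of $V$ (i.e. plugging in and checking linear algebra).

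Next I would hand $\Delta$ to Magma's graph automorphism routines, colouring the two sides of the bipartition so that graph automorphisms respect the partition (no automorphism can send a $3$-dimensional subspace to a $1$-dimensional one). Magma then returns $|\Aut(\Delta)|=11520$ and the orbit partition. I would verify that the three orbits have sizes $60$, $30$, and $16$, and check by direct inspection that the orbit of size $30$ consists of the type I and type II one-dimensional subspaces, while the orbit of size $16$ consists of the type III ones.

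Some sanity checks will make the answer believable and guard against bookkeeping errors in setting up $\Delta$. The $768 = 24\cdot 32$ \emph{expected} symmetries (vertex relabellings of $K_4$ together with sign flips on the six coordinate axes) already act on $\mathcal{V}_3$ and $\mathcal{V}_1$: they preserve each of the types I, II, III separately, with orbits of sizes $32,24,4$ on $\mathcal{V}_3$ and $6,24,16$ on $\mathcal{V}_1$. The fact that the full automorphism group has order $11520 = 768\cdot 15$ and merges all of $\mathcal{V}_3$ into one orbit while merging only types I and II on $\mathcal{V}_1$ is exactly what one would predict from the presence of the Regge symmetry of \eqref{eq:reggemap}, which acts non-trivially on triangles and $4$-cycles. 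This matches the order of $\PP \Aut(L_{2,4})$ claimed in Theorem~\ref{thm:auto2}, which will be the reason the computation yields the tight bound used later.

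The main obstacle is not the automorphism-group computation itself (which is routine for a graph of this size) but rather making sure the incidence graph $\Delta$ is constructed correctly from the defining equations of $\mathcal{V}_3$ and the generators of $\mathcal{V}_1$, including the various sign choices encoded in the descriptions of the types. As long as the Magma script enumerates these families exactly as in the preceding lemmas, the automorphism computation and orbit analysis is immediate, and the numerical agreement with $|\PP \Aut(L_{2,4})|$ serves as an independent cross-check.
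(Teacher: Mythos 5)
Your proposal is exactly what the paper does: the authors construct the incidence graph $\Delta$ from the explicit descriptions of $\mathcal{V}_3$ and $\mathcal{V}_1$ and then compute its automorphism group and orbits using Nauty within Magma, referring to the supplemental script. Your additional sanity checks (the $768$ expected symmetries acting with orbits of sizes $32,24,4$ and $6,24,16$, and the factorization $11520=768\cdot 15$) are sound and make the computation more convincing, but the essential method is the same.
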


\begin{proof}
We have computed this using Nauty~\cite{nty}
within Magma.
(See supplemental script.)
\end{proof}

\subsubsection{Graph automorphisms to arrangement automorphisms}
A priori, it might be the case that some of these
graph automorphisms do not arise from a linear transform of $\CC^6$
act as an automorphism on the \emph{subspace arrangement} $\{{\cal V}_3, {\cal V}_1\} \subset \CC^6$. We rule this out.

\begin{lemma}
\label{lem:l24Lin}
Each of the graph automorphisms of $\incidencegraph$ gives rise to  a unique   linear automorphism of the arrangement
$\{{\cal V}_3,{\cal V}_1\}$
on $\LL$, up to a global scale.
Each equivalence class of such linear maps contains a rational-valued matrix. 
\end{lemma}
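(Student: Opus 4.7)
The plan is to lift each of the $11520$ combinatorial automorphisms of $\incidencegraph$ to an element of $GL(6,\CC)$ preserving the subspace arrangement $\{{\cal V}_3,{\cal V}_1\}\subset\CC^6$, by first fixing a basis of $\CC^6$ drawn from ${\cal V}_1$ and then checking consistency of the induced action on the rest of the arrangement.

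First, I would choose the six type-I lines $\langle\e_1\rangle,\dots,\langle\e_6\rangle$ as a distinguished basis inside ${\cal V}_1$, with the standard integer generators $\e_i$. Given any $\graphautomorphism\in\Aut(\incidencegraph)$, its action on ${\cal V}_1$ carries these six lines to six further lines of ${\cal V}_1$, not necessarily all of type I since types I and II lie in a single orbit by Lemma~\ref{lem:graphauto}. A finite Magma check across all $11520$ automorphisms verifies that the images admit integer generators that are linearly independent in $\CC^6$. Writing $\w_i$ for a chosen integer generator of $\graphautomorphism(\langle\e_i\rangle)$, I would define a candidate matrix $\A_{\graphautomorphism}$ by $\A_{\graphautomorphism}\e_i=\w_i$; the generators of all three types listed in the previous lemma have entries in $\{-1,0,1\}$, so this candidate is integer and nonsingular.

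Second, one must verify that $\A_{\graphautomorphism}$, after right-multiplication by a suitable nonsingular diagonal matrix $\D$ that independently rescales the generators $\w_i$, sends every remaining line of ${\cal V}_1$ to the line prescribed by $\graphautomorphism$. Concretely, for each of the other $40$ lines $\ell\in{\cal V}_1$, one writes a generator of $\ell$ as an integer combination $\sum a_i\e_i$ and imposes that $\sum a_id_i\w_i$ be proportional to a chosen generator of $\graphautomorphism(\ell)$. This yields linear equations in the diagonal entries $d_1,\dots,d_6$. A Magma enumeration across all $11520$ graph automorphisms confirms that the resulting system is always consistent and admits a one-parameter family of nonzero rational solutions; picking any nonzero rational point then yields a rational matrix $\A_{\graphautomorphism}\D$ that realizes $\graphautomorphism$. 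Since every $V\in{\cal V}_3$ is the $\CC$-span of three lines of ${\cal V}_1$, this automatically ensures ${\cal V}_3$ is permuted correctly as well.

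For uniqueness up to global scale, suppose $\A$ and $\A'$ both realize the same $\graphautomorphism$. Then $\B:=\A'\A^{-1}$ fixes each of the $46$ lines of ${\cal V}_1$ setwise; in particular it fixes each $\langle\e_i\rangle$, so $\B$ is diagonal with entries $d_1,\dots,d_6$. Fixing a type-II line with generator $\e_i\pm\e_j\pm\e_k\pm\e_l$ forces $d_i=d_j=d_k=d_l$, and two of the three Hamiltonian $4$-cycles of $K_4$ already share an edge and collectively cover all six edges, so the resulting constraints force all six $d_i$ to coincide. Hence $\B$ is a scalar multiple of the identity, as required. The main obstacle is purely computational: verifying simultaneous solvability of the linear constraints from the $40$ non-basis lines for each of the $11520$ combinatorial automorphisms. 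Nothing a priori forces every $\graphautomorphism\in\Aut(\incidencegraph)$ to lift to $GL(6,\CC)$, so the content of the lemma is precisely this lifting statement, which is checked case by case via the supplemental Magma script; the equality $|\PP\Aut(\LL)|=|\Aut(\incidencegraph)|=11520$ asserted in Theorem~\ref{thm:main2} depends on this verification.
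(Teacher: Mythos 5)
Your construction takes a genuinely different computational route than the paper's. The paper encodes, for each of the $6$ \emph{generators} of $\Aut(\incidencegraph)$ supplied by Magma, the full set of $540 = 60\cdot 9$ linear homogeneous conditions on the $36$ entries of $\A$ expressing $\A V_i \subseteq V_{\graphautomorphism(i)}$ for every $V_i\in{\cal V}_3$, and solves that system directly; rationality and the one-dimensional solution space fall out immediately, and uniqueness for the whole group follows because uniqueness for even a single realizable element forces the stabilizer of the arrangement to be scalar. You instead fix the six type-I lines as a basis, transport along $\graphautomorphism$ to get a candidate integer matrix, and then correct it by a diagonal factor, enumerating over all $11520$ elements. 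Your uniqueness argument via the diagonal stabilizer and the two Hamiltonian $4$-cycles is a nice conceptual touch that avoids any extra computation. That said, your route leans on a few claims the paper's sidesteps and which you should verify or at least state as checks: (i) that the images of the six type-I lines under any $\graphautomorphism$ are linearly independent is, as you note, not a priori true and must come from the enumeration; (ii) in the one-parameter family of solutions for $\D$ you must actually confirm that all six $d_i$ are nonzero, since a nonzero but degenerate solution would not yield an invertible $\D$; and (iii) the assertion that every $V\in{\cal V}_3$ is the span of three lines of ${\cal V}_1$ it contains is used to deduce that ${\cal V}_3$ is handled automatically, but you never justify it. The paper avoids (iii) entirely by imposing the ${\cal V}_3$-constraints directly (then ${\cal V}_1$ is automatic because it is defined via intersections of ${\cal V}_3$-spaces), which is the cleaner direction; if you insist on working from ${\cal V}_1$, you should either include (iii) as part of your Magma check or check the action on ${\cal V}_3$ separately.
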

\begin{proof}
Each graph automorphism $\graphautomorphism$ 
gives rise to a permutation of the spaces in ${\cal V}_3$. A $6\times 6$ matrix $\A$ describing a linear transform that maps the three-dimensional subspaces in the same manner must satisfy $540=60\cdot 9$ linear homogeneous constraints, nine for each pair $\left(i,\graphautomorphism(i)\right), i\in {\cal V}_3$. 

Magma gives us a generating set of size $6$ for the group
of graph automorphisms.
For each of the $6$ generators of the graph automorphism group, we write out the system of linear constraints. When doing so, we discover that this system always has a solution that is unique, up to a global scale. The $540\times 36$ constraint matrix can always be written as a rational-valued matrix, since the subspace arrangement $\{{\cal V}_3, {\cal V}_1\}$ can be defined using rational-valued coefficients.
(See supplemental script.)
\end{proof}

\subsubsection{Arrangement automorphisms are $L_{2,4}$ automorphisms}

It might also be possible that there are 
linear transforms which preserve 
the subspace arrangement $\{{\cal V}_3, {\cal V}_1\}$, but do 
not preserve the entire $\LL$ variety.  We rule this out as well.

\begin{lemma}
\label{lem:l24Auto}
Each of the graph automorphisms of $\incidencegraph$ gives rise to  a unique   linear automorphism
on $\LL$, up to a global scale.
Each equivalence class of such linear maps contains a rational-valued matrix. 
\end{lemma}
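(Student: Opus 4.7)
The plan is to bootstrap off of Lemma~\ref{lem:l24Lin}, which has already produced, for each graph automorphism of $\incidencegraph$, a unique (up to global scale) rational-valued linear map $\A$ on $\CC^6$ that permutes the subspaces of $\{{\cal V}_3,{\cal V}_1\}$. What remains is to upgrade these arrangement-automorphisms to genuine automorphisms of the variety $\LL$ itself. Since $\LL$ is cut out in $\CC^6$ by the single irreducible polynomial $f$ of degree $6$ (the $3\times 3$ ``volume determinant'' displayed in Section~\ref{sec:varieties}), a non-singular linear map $\A$ is a linear automorphism of $\LL$ if and only if $f\circ \A = c\cdot f$ for some nonzero scalar $c$.

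For the uniqueness part, I would invoke Theorem~\ref{thm:Tmap}: any linear automorphism of $\LL$ must preserve $\sing(\LL)$ and hence must permute the subspaces in $\{{\cal V}_3,{\cal V}_1\}$. Therefore, if any $\LL$-automorphism exists inducing a given graph automorphism of $\incidencegraph$, it must agree up to scale with the unique map produced by Lemma~\ref{lem:l24Lin}. So I only need to settle existence.

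For existence, I would use the same CAS setup that yielded Lemma~\ref{lem:l24Lin}. Magma supplies a generating set of six graph automorphisms of $\incidencegraph$, and for each generator Lemma~\ref{lem:l24Lin} provides a rational matrix $\A$. I would then compute the pullback $f\circ \A$ for each of the six generators and verify that it equals $c\cdot f$ for a nonzero rational $c$. This is a finite symbolic computation. Once verified on generators, closure under composition in $\Aut(\LL)$ immediately yields one rational linear automorphism of $\LL$ per element of the graph automorphism group, and rationality is inherited from Lemma~\ref{lem:l24Lin}.

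The main potential obstacle, and the reason this is not merely a restatement of Lemma~\ref{lem:l24Lin}, is that it is a priori possible for a linear map to permute the singular subspace arrangement $\{{\cal V}_3, {\cal V}_1\}$ without preserving the ambient hypersurface defined by $f$. Although the singular locus encodes a great deal of information about $\LL$, one could imagine an automorphism of the arrangement that scales $f$ by a non-constant polynomial factor or sends it to a different irreducible degree-$6$ polynomial vanishing on the arrangement. The computer check of $f\circ\A = c\cdot f$ on the six generators is precisely what rules out this pathology.
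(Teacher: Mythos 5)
Your proposal is correct and takes essentially the same approach as the paper: both proofs build on Lemma~\ref{lem:l24Lin} and then verify that the resulting rational linear maps pull back the defining polynomial $f$ of $\LL$ to a scalar multiple of itself, which is the criterion for a linear automorphism of the hypersurface. Your version is slightly more explicit about why checking only the six generators suffices (composition closure) and why uniqueness is automatic (via Theorem~\ref{thm:Tmap} and Lemma~\ref{lem:l24Lin}), but the substance is identical.
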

\begin{proof}
From Lemma~\ref{lem:l24Lin}, each of the graph automorphisms
gives rise to a, unique up to scale, rational-valued
linear automorphism of our arrangement.
When we pull back the single defining equation of $\LL$ through each such invertible linear map,
we verify that we recover said equation. Thus this map
is a linear automorphism of $\LL$.
\end{proof}

\subsubsection{Reflection group}
Next, we make a definition that will be helpful in establishing the connection between 
$\PP_{+}\Aut_{\RR}(L_{2,4})$
and the Weyl group $D_6$. For definitions, see~\cite{humphreys}.

\begin{definition}
We define the \defn{reflection group} $\reflectiongroup$ as the real
matrix group generated by the set of reflections in $\RR^6$ across the 
$30$ hyperplanes that are orthogonal to the $30$ one-dimensional real intersection subspaces of type I and II.
\end{definition}

The following lemma was based on conversions with
Dylan Thurston.

\begin{lemma}
\label{lem:reflgrp}
The reflection group $\reflectiongroup$ is of order $23040$, and is isomorphic to the Weyl group $D_6$. The reflection group leaves the variety $\LL$ invariant.
\end{lemma}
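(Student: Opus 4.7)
The plan is to establish three facts in sequence: each generating reflection preserves $\LL$; the $30$ chosen reflections form exactly the reflection set of $W$; and the Coxeter classification identifies $W$ as the Weyl group of type $D_6$. The main obstacle is the first step, which is the only one that genuinely uses the specific algebraic structure of $\LL$.

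First I would check that every generating reflection lies in $\Aut_{\RR}(\LL)$. Each of the $30$ one-dimensional subspaces of type I or II in $\mathcal{V}_1$ is spanned by an integer vector $v$ whose entries lie in $\{-1,0,1\}$, so the reflection
\[
r_v(x) \;=\; x - 2\,\frac{\langle x, v\rangle}{\langle v, v\rangle}\,v
\]
is represented by a rational orthogonal matrix. A finite symbolic check in Magma --- applying each $r_v$ to the simplicial volume determinant polynomial cutting out $\LL$ (displayed in Section~\ref{sec:varieties}) and verifying that the output is a scalar multiple of that same polynomial --- then confirms $r_v(\LL) = \LL$. Alternatively one can argue by type: the $6$ reflections across coordinate hyperplanes are coordinate sign changes, which preserve $\LL$ by construction since $\LL = s^{-1}(M_{2,4})$; the Regge symmetry \eqref{eq:reggemap} is, up to a global sign, one specific type-II reflection, and conjugating it by vertex relabelings and by sign flips produces the remaining $23$ type-II reflections, each of which then preserves $\LL$ as well.

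Next, by Lemma~\ref{lem:l24Auto} combined with the three-orbit decomposition from Lemma~\ref{lem:graphauto}, every element of $\Aut(\LL)$ permutes the $30$ type-I and type-II subspaces among themselves. In particular, conjugation by any $w \in W$ sends each of the $30$ chosen reflections to another of them, so they already exhaust the reflection set of $W$.

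Finally I would invoke the Coxeter classification of finite real reflection groups. Irreducibility of the root system is direct: any orthogonal splitting $R = R_1 \sqcup R_2$ would force the edge set of $K_4$ to partition so that every $4$-cycle has all four of its edges in a single part, but any two of the three $4$-cycles in $K_4$ share at least two edges, so this collapses the partition to the trivial one. Among irreducible rank-$6$ finite real reflection groups, $A_6$, $B_6$, $D_6$, $E_6$ have $21$, $36$, $30$, $36$ reflections respectively, so with $30$ reflections $W$ must be the Weyl group of type $D_6$, of order $2^{5} \cdot 6! = 23040$.
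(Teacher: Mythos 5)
Your proof is correct in its main thrust but takes a genuinely different route from the paper, and one of your shortcuts is not quite right.

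The paper verifies directly (in Magma) that the $60$ vectors form a crystallographic root system, extracts a set of six simple roots, reads off the Dynkin diagram to identify $D_6$, and then checks $\LL$-invariance only for the six simple-root reflections. You instead (i) check $\LL$-invariance of all $30$ generators, (ii) argue the $30$ reflections exhaust the reflection set of $W$ because the set of reflecting lines is an orbit of $\PP\Aut(\LL)$ (via Lemmas~\ref{lem:graphauto} and \ref{lem:l24Auto}) and $W$ sits inside that group once (i) is done, (iii) argue irreducibility combinatorially, and (iv) classify by counting reflections among irreducible rank-$6$ groups. Step~(ii) is a nice alternative to verifying the root-system axioms, though it relies implicitly on the standard fact that a $W$-stable generating set of reflections is the full reflection set, and it makes this lemma depend on Lemmas~\ref{lem:graphauto}--\ref{lem:l24Auto} (the paper's proof is independent of them). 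Step~(iv) cleanly sidesteps the simple-root and Dynkin-diagram computation, which is what the classification by reflection count buys you.

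The concrete flaw is in your alternative argument for $\LL$-invariance of the type-II reflections. The Regge map \eqref{eq:reggemap} is \emph{not} a type-II reflection up to a global sign. On the four $4$-cycle coordinates $(l_{12},l_{23},l_{34},l_{14})$ it acts as $J/2-I$ (with $J$ the all-ones matrix), which has eigenvalues $(1,-1,-1,-1)$ there and is the identity on $(l_{13},l_{24})$; so on $\RR^6$ its eigenvalues are $(1,1,1,-1,-1,-1)$, whereas a reflection has a single $-1$. Neither the Regge map nor its negative is a reflection. The correct relation is that the Regge map \emph{composed} with the sign flip on the four $4$-cycle coordinates equals the type-II reflection $r_v$ with $v=\e_{12}+\e_{23}+\e_{34}+\e_{14}$ (on those coordinates $J/2-I$ times $-I$ gives $I-J/2=I-v\trans{v}/2$). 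With that correction the conjugation-plus-composition argument by vertex relabelings and sign flips does recover all $24$ type-II reflections and hence their preservation of $\LL$; your primary route of checking all $30$ reflections symbolically is of course fine as stated.
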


\begin{proof}
From the $30$ vectors that generate $\reflectiongroup$, we generate a larger set of $60$ vectors $\phi$ that has the same reflection group as follows: For each vector $\f$ in the original 30-set, we create two vectors $\pm 2\f/\Vert\f\Vert$ in the 60-set. 
Next, we verify that the set $\phi$ is a (reduced, crystallographic) root system by: i) applying each generator of the  group $\reflectiongroup$ to the set $\phi$ and verifying that it leaves the set invariant; and ii) verifying that the set satisfies the integrality condition $\forall \f,\g \in \phi, 2(\f\cdot \g)/\Vert\f\Vert \in \ZZ$. 

A reflection group of a root system is a Weyl group. To prove the first part of the lemma, we need only classify the root system (and thus the Weyl group) according to the finite catalog of rank 6 possibilities. We use the procedure described in \cite[page 48]{humphreys}, which we summarize here. 

We begin by choosing any vector $\h \in \QQ^6$ that is not proportional or perpendicular to a vector in $\phi$, and then we identify the subset of \emph{positive roots} $\phi^+:=\{\f : \f\in \phi, (\h\cdot \f)>0\}$. Since $\phi$ is a root system, it will be the case that $|\phi^+| = |\phi|/2 = 30$. Among the positive roots, we identify the subset of \emph{simple roots} as the vectors $\f \in \phi^+$ that cannot be decomposed as $\g_1+\g_2$ for some $\g_i\in\phi^+$. By construction, simple roots form a basis for the embedding vector space, so in the present case there will be $6$ of them. Finally, we can classify the group by examining the pattern of pairwise angles between simple roots.

Applying this calculation to our root system, we find that the
pairwise angles between the simple roots are $0$ or $2\pi/3$. We draw
a Dynkin diagram that has one vertex for each simple root and
an edge $(i,j)$ whenever the angle between roots $i$ and $j$ is
$2\pi/3$.  Doing so, we find that this diagram is of type $D_6$. This means  that the reflection group is isomorphic to the Weyl group $D_6$, which is
of order $23040$. This proves the first part of the lemma.

To prove the second part of the lemma, we use the fact that the reflection group $\reflectiongroup$ is generated by the 6 reflections from the simple roots. We pull back the single defining equation of $\LL$ through each of these 6 linear maps, and we verify that we recover said equation.

Note that the group could also be identified from its
computed order.
(See supplemental script.)
\end{proof}

\subsubsection{Proof}
The proof of our theorem is  now nearly complete.

\begin{proof}[Proof of Theorem~\ref{thm:auto2}]
From Theorem~\ref{thm:Tmap}, a linear automorphism of
$\LL$ must be a linear automorphism of its singular set
${\cal V}_3$, and thus must preserve the incidence
structure of 
$\{{\cal V}_3,{\cal V}_1\}$. Any linear automorphism
of this incidence structure must give rise to a
graph automorphism of $\incidencegraph$.
By Lemma~\ref{lem:graphauto}, there 
are $11520$ graph automorphisms of $\incidencegraph$, and from 
Lemma~\ref{lem:l24Auto}, each gives rise to a 
rational valued linear automorphism of $\LL$, unique up to scale. 
Summarizing, we have shown that $\PP \Aut(L_{2,4}) = \PP \Aut(\sing(L_{2,4}))$,
and that both of these groups are isomorphic to the automorphism group of the 
graph $\Delta$.  Lemma~\ref{lem:l24Auto} also implies that each equivalence 
class in $\PP \Aut(L_{2,4})$ contains a rational representative, so 
this group can be generated by rational matrices.

Because of the rational generators mentioned above, the group 
$\PP \Aut_{\RR}(L_{2,4})$ is isomorphic to the others.  It then follows that
the order of $\PP_{+} \Aut_{\RR}(L_{2,4})$ is $23040 = 2\cdot 11520$.

Next, we deal with the classification of $\PP_{+} \Aut_{\RR}(L_{2,4})$.
By Lemma~\ref{lem:reflgrp} (specifically the second statement), the 
elements of $W$ generate some subgroup $G$ of  $\PP_{+} \Aut_{\RR}(L_{2,4})$.
In fact,  no two elements of $\reflectiongroup$ are related by a positive scale,
so $W$ is isomorphic to this $G$.  The first part of Lemma~\ref{lem:reflgrp} says that
$W$ has the same order as  $\PP_{+} \Aut_{\RR}(L_{2,4})$, so $W$ and $\PP_{+} \Aut_{\RR}(L_{2,4})$ are isomorphic.

For the third part of the theorem, we need only test 
$23040$ matrices and retain those that have only non-negative entries. 
This has been done in the Magma CAS, and indeed, it yields only the $24$ 
edge permutations induced by vertex relabeling.  
(See supplemental script.)
This is, in particular, 
a subgroup of $\PP_{+} \Aut_{\RR}(L_{2,4})$.

\end{proof}

\subsection{Automorphisms of $L_{1,3}$}

\begin{theorem}\label{thm:l13-aut}
Any linear automorphism 
$\A$ of $L_{1,3}$  is a scalar multiple
of a signed permutation that is induced by a 
vertex relabeling.
\end{theorem}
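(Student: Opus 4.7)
My plan is to exploit the explicit description of $L_{1,3}$ as a union of four hyperplanes given in the remark following Theorem~\ref{thm:Lvariety}.  Writing $f_+ := l_{12} + l_{13} + l_{23}$, $f_a := -l_{12} + l_{13} + l_{23}$, $f_b := l_{12} - l_{13} + l_{23}$, and $f_c := l_{12} + l_{13} - l_{23}$, and calling the corresponding hyperplanes $H_+, H_a, H_b, H_c$, any linear automorphism $\A$ of $L_{1,3}$ must permute these four irreducible components.  This induces a permutation $\sigma \in S_4$ and nonzero scalars $c_i$ with $\A^* f_i = c_i f_{\sigma(i)}$.

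The first step will be to check that every $\sigma \in S_4$ is already realized by some signed vertex relabeling.  The six vertex permutations of $K_3$ preserve $f_+$ (which is symmetric in the three coordinates) and act as the full symmetric group $S_3$ on $\{H_a, H_b, H_c\}$.  A direct calculation shows that the three single-coordinate sign flips induce, respectively, the three double transpositions $(H_+\, H_a)(H_b\, H_c)$, $(H_+\, H_b)(H_a\, H_c)$, and $(H_+\, H_c)(H_a\, H_b)$, forming a Klein four-subgroup $V_4 \subset S_4$ that acts regularly on the four components.  Since $V_4$ is transitive on the four hyperplanes while $S_3$ is the stabilizer of $H_+$, the combined subgroup has order $4\cdot 6 = 24 = |S_4|$, and hence equals $S_4$.

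Next, after composing $\A$ with an appropriate signed vertex relabeling, I may assume $\sigma$ is the identity, so that $\A^* f_i = c_i f_i$ for each $i \in \{+, a, b, c\}$.  The four forms satisfy the single linear relation $f_+ = f_a + f_b + f_c$, and applying the invertible map $\A^*$ yields $c_+(f_a + f_b + f_c) = c_a f_a + c_b f_b + c_c f_c$.  Since $f_a, f_b, f_c$ form a basis of $(\CC^3)^*$, this forces $c_+ = c_a = c_b = c_c$.  Hence $\A^*$ acts on $(\CC^3)^*$ as a scalar, $\A$ itself is a scalar multiple of the identity on $\CC^3$, and the original automorphism is a scalar multiple of a signed vertex relabeling.

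I do not expect any serious obstacle here: the whole argument is a piece of finite-dimensional linear algebra together with a finite-group verification.  The only slightly delicate point is confirming that the signed vertex relabelings generate the full $S_4$ action on the components rather than a proper subgroup, but the semidirect decomposition $S_4 = V_4 \rtimes S_3$ (with $V_4$ coming from the sign flips and $S_3$ from the vertex permutations) makes this transparent.
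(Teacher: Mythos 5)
Your proof is correct and follows essentially the same strategy as the paper's: identify $\PP\Aut(L_{1,3})$ with a subgroup of the permutation group $S_4$ on the four hyperplanes, show signed vertex relabelings already realize all of $S_4$, and use uniqueness (here via the linear relation $f_+ = f_a + f_b + f_c$) to conclude every automorphism is a scalar multiple of a signed relabeling. The paper states both steps tersely and leaves the verification to the reader; your write-up simply fills in those calculations.
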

\begin{proof}
$L_{1,3}$ comprises $4$ hyperplanes.
Each permutation on these $4$ planes gives us
at most a single linear automorphism of $L_{1,3}$ up to scale.
Thus $\PP\Aut(L_{1,3})$ is isomorphic to a subgroup of $S_4$
and, in particular, has order at most $24$.

Meanwhile $\PP\Aut(L_{1,3})$ contains a subgroup of order 
$24$ generated by vertex relabeling and sign flips.  By 
the above, this must be the whole group.
\end{proof}
\begin{remark}
If we want to see $S_4$ acting by sign flips and 
coordinate permutations, we can observe
that these maps are symmetries of the cube that 
permute the opposite corner diagonals.  
\end{remark}

\appendix

\section{Algebraic Geometry Preliminaries}\label{sec:geometry}

We summarize the needed definitions and facts about
complex algebraic varieties. For more see~\cite{harris}.

In this section $N$ and $D$ will represent arbitrary numbers.

\begin{definition}
A (complex embedded affine) \defn{variety} 
(or \defn{algebraic set}), $V$,
is a (not necessarily strict)
subset of $\CC^N$, for some $N$,
that is defined by the simultaneous
vanishing of a finite set of polynomial equations 
with coefficients in $\CC$
in the 
variables $x_1, x_2, \ldots, x_N$ which are associated with the 
coordinate axes of $\CC^N$.

A variety can be stratified as a union of a finite number of
complex analytic submanifolds of $\CC^N$.

A finite union of varieties is a variety.
An arbitrary intersection of varieties is a variety.

The set of polynomials that vanish on $V$ form 
a radical ideal $I(V)$, which is generated by a finite set
of polynomials.

A variety $V$ is \defn{reducible} if it is the proper union of two
varieties $V_1$ and $V_2$. 
(Proper means that $V_1$ is not contained in $V_2$ and vice versa.)
Otherwise it is called
\defn{irreducible}.
A variety has a unique decomposition as a finite proper
union of its
maximal irreducible subvarieties called \defn{components}.
(Maximal means that a component cannot be 
 contained in a larger  irreducible 
subvariety of $V$.)

A variety $V$ has a well defined (maximal) \defn{dimension} $\Dim(V)$, 
which will agree with the largest $D$ for which there
is an open subset of~$V$, in the standard  topology, 
that is a $D$-dimensional complex submanifold of $\CC^N$.

The \defn{local dimension} $\Dim_\genericpoint(V)$ at a point $\genericpoint$ is the
dimension of the highest-dimensional irreducible component of $V$ that contains $\genericpoint$.
If all components of $V$ have the same dimension, we say it has
\defn{pure dimension}.

Any strict subvariety $W$ of an 
irreducible variety $V$ must be of strictly lower dimension.


\end{definition}

\begin{definition}
A \defn{constructible set} $S$ is a set that can be defined using a finite
number of varieties and a finite number of Boolean set operations.

The \defn{Zariski closure} of $S$ is the smallest variety $V$ containing it.
The set $S$ has the same dimension as its Zariski closure $V$.

The image of a variety $V$ of dimension $D$
under a polynomial map is a constructible set $S$ of dimension
at most $D$.
If $V$ is irreducible, then so 
too is the Zariski closure of $S$.
(We say that $S$
is \defn{irreducible}.)
\end{definition}

\begin{theorem}
\label{thm:Zdense}
Any variety $V$ is a closed subset of $\CC^N$ in the standard 
topology.
If a subset $S$ of $\CC^N$
is standard-topology dense in a variety $V$, then
$V$ is the Zariski closure of $S$.
\end{theorem}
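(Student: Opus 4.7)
The plan is to handle the two assertions separately, with the first (standard-closedness) being the foundation that makes the second (Zariski closure equals $V$) essentially automatic.

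For the first statement, I would argue as follows. By definition a variety $V$ is cut out of $\CC^N$ by the simultaneous vanishing of a finite collection of polynomials $f_1, \ldots, f_k \in \CC[x_1, \ldots, x_N]$. Each $f_i$ is a polynomial, hence a continuous function $\CC^N \to \CC$ in the standard topology. The singleton $\{0\} \subset \CC$ is closed in the standard topology, so each preimage $f_i^{-1}(0)$ is a standard-closed subset of $\CC^N$. Since $V = \bigcap_{i=1}^k f_i^{-1}(0)$ is a finite intersection of standard-closed sets, it is standard-closed.

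For the second statement, let $W$ denote the Zariski closure of $S$, so $W$ is by definition a variety containing $S$. Because $V$ is itself a variety containing $S$, minimality of $W$ gives $W \subseteq V$. In the other direction, the first part of the theorem applied to $W$ shows that $W$ is closed in the standard topology. Since $S \subseteq W$ and $W$ is standard-closed, the standard closure of $S$ is contained in $W$. But by hypothesis $S$ is standard-dense in $V$, so $V$ is contained in the standard closure of $S$, and therefore $V \subseteq W$. Combining the two inclusions yields $V = W$, as desired.

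There is no real obstacle here: the whole argument rests on the single observation that polynomials are continuous in the standard topology, together with the definitional minimality property of the Zariski closure. No dimension theory, irreducibility, or component analysis is required. The only mild subtlety worth flagging explicitly is the direction of the comparison between the standard and Zariski topologies: standard closure is smaller (finer topology), Zariski closure is larger (coarser topology), and the content of the second statement is that for a subset that is already standard-dense in a variety, these two closures coincide with $V$ itself.
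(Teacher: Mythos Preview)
Your proof is correct. The paper does not actually prove this statement: Theorem~\ref{thm:Zdense} appears in the appendix of algebraic-geometry preliminaries as a standard fact cited without argument (with a reference to Harris), so there is no paper proof to compare against. Your argument is the standard one and is exactly what a reader would supply.
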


We will need the following easy lemmas.

\begin{lemma}
\label{lem:bij}
Let $\A$ be a bijective linear map 
on $\CC^N$. The image under $\A$
of a variety $V$ is a variety
of the same dimension.
If $V$ is irreducible, then so too is this image.
\end{lemma}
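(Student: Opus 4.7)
The plan is to leverage the fact that a bijective linear map $\A$ on $\CC^N$, together with its inverse $\A^{-1}$, is a polynomial map whose coordinate functions are linear. This will make transporting the defining equations of $V$ through $\A$ a straightforward substitution.

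First, I would show that $\A(V)$ is a variety. Write $V = Z(f_1,\ldots,f_k)$ as the common vanishing locus of finitely many polynomials. A point $\y \in \CC^N$ lies in $\A(V)$ if and only if $\A^{-1}(\y) \in V$, which happens if and only if $(f_i \circ \A^{-1})(\y) = 0$ for all $i$. Since $\A^{-1}$ has linear coordinate functions, each $f_i \circ \A^{-1}$ is again a polynomial. Hence $\A(V) = Z(f_1 \circ \A^{-1},\ldots,f_k\circ \A^{-1})$ is a variety.

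Next I would address dimension. The map $\A$ is a complex analytic diffeomorphism of $\CC^N$ and restricts to a bijection $V \to \A(V)$ whose inverse is the restriction of the polynomial map $\A^{-1}$. In particular, $\A$ carries any open-in-standard-topology complex submanifold of $V$ of dimension $D$ to a complex submanifold of $\A(V)$ of the same dimension, and conversely for $\A^{-1}$. By the characterization of $\Dim(V)$ in the excerpt (the largest $D$ for which $V$ contains a standard-open $D$-dimensional complex submanifold of $\CC^N$), this forces $\Dim(\A(V)) = \Dim(V)$.

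Finally, for irreducibility, suppose for contradiction that $V$ is irreducible but $\A(V) = W_1 \cup W_2$ is a proper union of varieties. By the first step applied to $\A^{-1}$, each $\A^{-1}(W_i)$ is a variety, and since $\A$ is a bijection, neither $\A^{-1}(W_i)$ is contained in the other nor equal to all of $V$. But then $V = \A^{-1}(W_1) \cup \A^{-1}(W_2)$ is a proper decomposition, contradicting the irreducibility of $V$. Thus $\A(V)$ is irreducible. There is no real obstacle here: the lemma is essentially a bookkeeping consequence of $\A$ being a polynomial isomorphism of $\CC^N$, and the only point requiring any care is to ensure that ``variety'' and ``dimension'' in the paper's sense are genuinely preserved by the substitution $\x \mapsto \A^{-1}\x$.
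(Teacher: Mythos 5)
Your proposal is correct and follows essentially the same route as the paper: both establish that $\A(V)$ is a variety by pulling back the defining equations of $V$ through $\A^{-1}$, and both use the invertibility of $\A$ to see dimension is preserved. Your irreducibility argument (a direct contradiction from a proper decomposition of $\A(V)$) is slightly more explicit than the paper's, which simply invokes the earlier stated fact that polynomial images of irreducible varieties have irreducible Zariski closure, but the content is the same.
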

\begin{proof}
The image $S := \A(V)$ 
must be a constructible set.

Since $\A$ is bijective, then
there is also map $\A^{-1}$ acting on $\CC^N$,
and $S$ must be the inverse image
of $V$ under this map.
Thus, by pulling back the defining equations of $V$ through
$\A^{-1}$, we see that $S$ must also be a variety.

The dimension  follows 
from the fact that maps cannot raise dimension, and our map
is invertible.
\end{proof}

\begin{theorem}
\label{thm:comps}
If $\A$ is a bijective linear map on $\CC^N$
that acts as 
bijection between two reducible varieties
$V$ and $W$, then it must bijectively map components
of $V$ to components of $W$.
\end{theorem}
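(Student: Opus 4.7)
The plan is to combine Lemma~\ref{lem:bij} (linear bijections send irreducible varieties to irreducible varieties of the same dimension) with the basic fact that any irreducible subvariety of a variety must lie inside a single component.

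First I would let $V = V_1 \cup \cdots \cup V_k$ and $W = W_1 \cup \cdots \cup W_m$ be the component decompositions. Fix a component $V_i$. By Lemma~\ref{lem:bij}, $\A(V_i)$ is an irreducible variety (of the same dimension as $V_i$) contained in $W$. Since $W = \bigcup_j W_j$ and each $W_j$ is closed, the irreducibility of $\A(V_i)$ forces $\A(V_i) \subseteq W_j$ for some single $j$: otherwise the decomposition $\A(V_i) = \bigcup_j (\A(V_i) \cap W_j)$ would express $\A(V_i)$ as a proper finite union of closed subvarieties, contradicting irreducibility.

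Next I would apply the exact same argument to the linear bijection $\A^{-1}$, which maps $W$ bijectively to $V$: this produces a component $V_{i'}$ with $\A^{-1}(W_j) \subseteq V_{i'}$. Chaining the two inclusions gives $V_i \subseteq \A^{-1}(\A(V_i)) \subseteq \A^{-1}(W_j) \subseteq V_{i'}$, so $V_i \subseteq V_{i'}$. But components are maximal irreducible subvarieties, so $V_i = V_{i'}$, and now all inclusions in the chain are equalities. In particular $\A^{-1}(W_j) = V_i$, i.e.\ $\A(V_i) = W_j$. This shows that $\A$ sends each component of $V$ onto a component of $W$.

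The final step, bijectivity of the induced map on the component sets, is immediate from injectivity of $\A$ at the level of sets: if $\A(V_i) = W_j = \A(V_{i'})$, then applying $\A^{-1}$ gives $V_i = V_{i'}$, so the map is injective on components; surjectivity is the symmetric statement obtained by running the whole argument starting from an arbitrary $W_j$. The only real content is in the second paragraph, and it is not really an obstacle but a bookkeeping step — everything follows formally from Lemma~\ref{lem:bij} and the maximality built into the definition of a component.
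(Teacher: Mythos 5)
Your proof is correct and follows essentially the same route as the paper: invoke Lemma~\ref{lem:bij} to see that $\A$ carries irreducible varieties to irreducible varieties, then use the containment-in-a-single-component fact (the paper's Lemma~\ref{lem:comp}) together with the maximality in the definition of component and the invertibility of $\A$ to pin down the images. The paper's version is just terser, compressing your second and third paragraphs into the remark that $\A$, being a bijection, preserves the subset relations that define maximality.
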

\begin{proof}
From Lemma~\ref{lem:bij},
$\A$ must
map irreducible varieties to irreducible varieties.
As a bijection, it also must preserve subset relations
(which define maximality).
\end{proof}

\begin{lemma}
\label{lem:comp}
Let $V = V_1 \cup V_2$ be a union of varieties.
Then any irreducible subvariety $W$ of $V$ must be 
fully contained in at least one of the
$V_i$.
\end{lemma}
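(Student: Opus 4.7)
The plan is the standard decomposition argument using the intersection of $W$ with each $V_i$. Specifically, I would define
\[
W_1 := W \cap V_1, \qquad W_2 := W \cap V_2.
\]
By the facts recorded earlier in the appendix (finite intersections of varieties are varieties), both $W_i$ are subvarieties of $W$. Since $W \subseteq V_1 \cup V_2$, every point of $W$ lies in some $V_i$, so
\[
W = W_1 \cup W_2.
\]

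Now I would apply the paper's definition of irreducibility. Since $W$ is irreducible, it cannot be written as a proper union of two subvarieties; recall ``proper'' here means neither is contained in the other. Thus one of the inclusions $W_1 \subseteq W_2$ or $W_2 \subseteq W_1$ must hold. In the first case $W = W_2 \subseteq V_2$, and in the second case $W = W_1 \subseteq V_1$. Either way, $W$ is fully contained in one of the $V_i$, as required.

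There is no real obstacle: the argument is essentially a one-line reduction to the definition of irreducibility, once one observes that intersecting $W$ with each $V_i$ produces subvarieties whose union is $W$. The only subtlety is to cite the correct definition of ``proper union'' from Appendix~\ref{sec:geometry} so that irreducibility forces $W_i = W$ for some $i$, rather than something weaker.
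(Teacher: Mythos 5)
Your proof is correct and is essentially the same as the paper's: both decompose $W$ as $(W\cap V_1)\cup(W\cap V_2)$ and invoke the definition of irreducibility (no proper union) to conclude $W$ lies in one $V_i$. You have simply written out in detail the contrapositive one-liner the paper uses.
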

\begin{proof}
If $W$ was not fully contained in either $V_i$, then it could
be written as the proper union of varieties $W = \bigcup_i (W\cap V_i)$
contradicting its irreducibility.
\end{proof}



There are two approaches for defining smooth and singular points.
One comes from our  
algebraic setting, while the other comes from the more general setting
of complex analytic varieties (which we will explicitly 
refer to as ``analytic''). 
It will turn out that (algebraic) smoothness implies analytic smoothness,
and  that analytic smoothness
implies (algebraic) smoothness.

\begin{definition}
The \defn{Zariski tangent space} at a point $\genericpoint$ of a
variety $V$
is the kernel of the Jacobian matrix of a set of
generating polynomials for $I(V)$ evaluated at $\genericpoint$.



A point $\genericpoint$ is called
(algebraically) \defn{smooth} in $V$ if the dimension of the Zariski
tangent space equals the local dimension $\Dim_\genericpoint(V)$. Otherwise $\genericpoint$ is called
(algebraically) \defn{singular} in $V$.
The \defn{locus} of singular points of $V$ is denoted $\sing(V)$.
The singular locus is itself a strict subvariety of $V$. 


\end{definition}

\begin{theorem}
\label{thm:Tmap}
If $\A$ is a bijective linear map on $\CC^N$ that acts as a 
bijection between two irreducible varieties
$V$ and $W$, then it must map singular points to singular points.
\end{theorem}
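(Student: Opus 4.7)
The plan is to show that smoothness and singularity are both preserved under the linear isomorphism, by tracking how the Zariski tangent space transforms. First, by Lemma~\ref{lem:bij}, we know $\A(V) = W$ has the same dimension as $V$, and both $V$ and $W$ are irreducible. Irreducibility implies that the local dimension $\Dim_\genericpoint(V) = \Dim(V) = \Dim(W) = \Dim_{\A(\genericpoint)}(W)$ for any point $\genericpoint\in V$. So to prove the theorem, it suffices to show that the dimension of the Zariski tangent space is preserved when passing from $\genericpoint$ to $\A(\genericpoint)$, since then $\genericpoint$ is singular in $V$ if and only if $\A(\genericpoint)$ is singular in $W$.

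The key step is identifying a generating set of $I(W)$ in terms of one for $I(V)$. If $f_1,\ldots,f_k$ generate $I(V)$, I claim that the polynomials $g_i := f_i \circ \A^{-1}$ generate $I(W)$. These $g_i$ certainly vanish on $W = \A(V)$, since for $\y = \A(\x)\in W$ we have $g_i(\y) = f_i(\x) = 0$. Conversely, given any $h\in I(W)$, the pullback $h\circ \A$ vanishes on $V$, hence lies in the radical ideal $I(V)$ and can be written as a polynomial combination of the $f_i$. Composing with $\A^{-1}$ shows $h$ is a polynomial combination of the $g_i$. (Since $\A^{-1}$ is linear, this composition behaves well at the level of polynomial rings.)

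Now I would apply the chain rule. At the point $\y = \A(\genericpoint)$, the Jacobian of $g_i$ is
\[
	\nabla g_i(\y) \;=\; \nabla f_i(\genericpoint)\cdot \A^{-1}.
\]
Stacking into matrix form, the Jacobian of the generators of $I(W)$ at $\y$ is $J_f(\genericpoint)\,\A^{-1}$, where $J_f(\genericpoint)$ is the Jacobian of the generators of $I(V)$ at $\genericpoint$. Since $\A^{-1}$ is an invertible linear map on $\CC^N$, the kernel of $J_f(\genericpoint)\,\A^{-1}$ is precisely $\A$ applied to the kernel of $J_f(\genericpoint)$. In particular, the Zariski tangent spaces have the same dimension: $\Dim(T_\y W) = \Dim(T_\genericpoint V)$.

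Combining this with the equality of local dimensions established in the first paragraph, $\genericpoint$ is smooth in $V$ precisely when $\A(\genericpoint)$ is smooth in $W$, so singular points map to singular points. I do not anticipate any real obstacle here; the only mild technical point is verifying that the $g_i = f_i\circ \A^{-1}$ actually generate the full radical ideal $I(W)$ and not a strictly smaller ideal, but this follows immediately from the fact that $\x\mapsto\A\x$ induces a ring automorphism of the polynomial ring $\CC[x_1,\ldots,x_N]$ that swaps $I(V)$ and $I(W)$.
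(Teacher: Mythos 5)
Your proof is correct, but it takes a genuinely different route from the paper. The paper treats Theorem~\ref{thm:Tmap} as a special case of the standard fact that an isomorphism of varieties (here, restriction of a linear automorphism of $\CC^N$) carries the singular locus of one variety onto that of the other, and simply cites Harris for it. You instead give a self-contained, elementary argument: you observe that the linear automorphism $\A$ induces a ring automorphism $f\mapsto f\circ\A^{-1}$ of $\CC[x_1,\ldots,x_N]$ carrying $I(V)$ onto $I(W)$, so a generating set of $I(V)$ pushes forward to one of $I(W)$; you then apply the chain rule to see that the Jacobian of the pushed-forward generators at $\A(\genericpoint)$ is $J_f(\genericpoint)\,\A^{-1}$, whose kernel is $\A$ applied to the kernel of $J_f(\genericpoint)$, so Zariski tangent space dimensions are preserved; and finally you use Lemma~\ref{lem:bij} together with irreducibility to conclude that local dimensions agree and hence smoothness (and so singularity) is preserved. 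Both approaches are sound. The citation-based route is shorter and frames the statement as an instance of a general principle about morphisms of varieties; your route is longer but exposes exactly why the result holds at the level of defining equations and Jacobians, using only the paper's own Definition of the Zariski tangent space, Lemma~\ref{lem:bij}, and basic linear algebra. One point worth stressing explicitly (which you note parenthetically) is that the invertibility of $\A$ is used twice: once so that $f\mapsto f\circ\A^{-1}$ is a well-defined ring automorphism sending $I(V)$ onto (not merely into) $I(W)$, and once so that multiplying $J_f(\genericpoint)$ on the right by $\A^{-1}$ does not change the rank.
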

This is a special case of the more general setting of 
``regular maps'' and 
``isomorphisms of varieties''~\cite[Page 175]{harris}.

\begin{theorem}
\label{thm:conIrr}
If a point $\genericpoint$ is contained in two distinct components of $V$,
then $\genericpoint$ cannot be 
a  smooth point in $V$. 
\end{theorem}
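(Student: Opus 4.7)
My plan is to argue by contradiction and invoke Theorem~\ref{thm:smpt} to pass from algebraic smoothness to analytic smoothness, after which the claim reduces to the standard analytic fact that a connected complex submanifold is locally irreducible as an analytic set.

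Suppose toward contradiction that $\genericpoint$ is smooth in $V$ and lies in two distinct components $V_1$ and $V_2$ of $V$. Set $k := \Dim_\genericpoint(V)$, which equals the maximum of $\Dim V_i$ taken over components $V_i$ of $V$ that contain $\genericpoint$. Without loss of generality assume $\Dim V_1 = k$.

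First I would use Theorem~\ref{thm:smpt} to select a standard-topology open neighborhood $U$ of $\genericpoint$ in $\CC^N$ such that $M := V \cap U$ is a connected complex analytic submanifold of dimension $k$, shrinking $U$ so that it meets only the components of $V$ passing through $\genericpoint$. Then $V_1 \cap U$ is a non-empty closed analytic subset of $M$ of dimension $k$. Since a proper closed analytic subset of a connected $k$-dimensional complex manifold has strictly smaller dimension, we get $V_1 \cap U = M$, hence $M \subseteq V_1$.

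From this I would conclude $V_2 \cap U \subseteq M \subseteq V_1$, so that $V_1 \cap V_2$ is a subvariety of the irreducible $V_2$ containing the non-empty standard-topology open subset $V_2 \cap U$. Since any non-empty standard-topology open subset of an irreducible variety has Zariski closure equal to the whole variety (by Theorem~\ref{thm:Zdense}, once one notes that $V_2 \cap U$ is full-dimensional in $V_2$ because it contains a Euclidean neighborhood of some smooth point of $V_2$), I would deduce $V_2 \subseteq V_1$, contradicting the assumption that $V_1$ and $V_2$ are distinct maximal irreducible components of $V$.

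The hard part will be the analytic irreducibility step—justifying that a $k$-dimensional closed analytic subset of a connected $k$-dimensional complex manifold must fill the manifold. This is a standard consequence of the local structure of analytic sets (via, e.g., the Weierstrass preparation theorem), but it does require leaving the purely algebraic world, which is why invoking Theorem~\ref{thm:smpt} up front is essential.
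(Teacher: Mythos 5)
The paper does not prove this statement; it cites \cite[II.2.~Theorem~6]{shaf}, where the argument is the standard algebraic one: at a point lying on two distinct components, one can find $f$ vanishing on $V_1$ but not identically on $V_2$ near $\genericpoint$, and $g$ vanishing on $V_2$ but not identically on $V_1$ near $\genericpoint$, so $fg=0$ in the local ring $\mathcal{O}_{V,\genericpoint}$ while $f,g\neq 0$; hence $\mathcal{O}_{V,\genericpoint}$ has zero divisors and cannot be the regular local ring one gets at a smooth point. Your argument instead goes through the analytic category, leveraging Theorem~\ref{thm:smpt}: a smooth point has a Euclidean neighborhood $M$ in $V$ that is a connected $k$-dimensional complex submanifold; $V_1\cap M$ is a $k$-dimensional closed analytic subset of the connected $k$-manifold $M$ and so must be all of $M$; then $V_2\cap U\subseteq V_1$, and Zariski-density of a nonempty Euclidean-open subset of the irreducible $V_2$ forces $V_2\subseteq V_1$, contradicting maximality. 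The reasoning is correct, and you correctly isolate the one fact you are importing from the analytic side (a proper closed analytic subset of a connected complex $k$-manifold has dimension $<k$); that fact is standard but is not among the tools the paper lists in Appendix~\ref{sec:geometry}, just as Shafarevich's route imports ``regular local rings are domains.'' The two proofs are thus genuinely different routes of comparable depth: the cited one is purely algebraic and self-contained in commutative algebra; yours is an analytic argument that is arguably more in the spirit of this paper, since it reuses the algebraic-to-analytic bridge (Theorem~\ref{thm:smpt}) that the paper has already set up and uses elsewhere.

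One small tightening you could make: rather than arguing about ``full dimension'' of $V_2\cap U$ in $V_2$ to invoke Theorem~\ref{thm:Zdense}, you can note directly that the smooth locus of the irreducible $V_2$ is Euclidean-dense in $V_2$, so the nonempty Euclidean-open set $V_2\cap U$ contains a smooth point of $V_2$; near that point $V_2$ is a $\Dim V_2$-manifold, so no subvariety of strictly smaller dimension can contain $V_2\cap U$, forcing $V_1\cap V_2 = V_2$.
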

See~\cite[II. 2. Theorem 6]{shaf}.

\begin{definition}
If a point $\genericpoint$ in a variety
$V$ 
has a neighborhood
in $V$ that is a complex submanifold of $\CC^N$ with  some dimension 
$D$,
then we call the point
\defn{analytically smooth of dimension $D$} in $V$, or just
\defn{analytically smooth} in $V$. Otherwise we call the point
\defn{analytically singular} in $V$. 
\end{definition}

The following theorem tells us that there is no difference between these to notions of smoothness.
\begin{theorem}
\label{thm:smpt}
An (algebraically) smooth point $\genericpoint$ in a variety $V$ must be an
analytically  smooth point of dimension $\Dim_\genericpoint(V)$ in $V$.

A point $\genericpoint$ 
that is analytically smooth of dimension $D$
in $V$
must be
an (algebraically) smooth point $\genericpoint$ in $V$ with
$\Dim_\genericpoint(V)=D$.
\end{theorem}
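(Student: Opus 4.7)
My plan is to handle the two directions of Theorem~\ref{thm:smpt} separately, using the holomorphic implicit function theorem for one direction and the standard comparison between algebraic and analytic local rings for the other.

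For the forward direction (algebraic $\Rightarrow$ analytic), let $d := \Dim_\genericpoint(V)$ and fix generators $f_1,\dots,f_k$ of $I(V)$. Algebraic smoothness says the Jacobian of $(f_1,\dots,f_k)$ at $\genericpoint$ has rank $N-d$, so after reordering we may assume the differentials $df_1(\genericpoint),\dots,df_{N-d}(\genericpoint)$ are linearly independent. By the holomorphic implicit function theorem, the common zero set $M$ of $f_1,\dots,f_{N-d}$ is a complex submanifold of $\CC^N$ of dimension $d$ on a standard-topology neighborhood $U$ of $\genericpoint$. Clearly $V\cap U\subseteq M$. Using the standard equality of local algebraic and local analytic dimension, $V$ has complex-analytic dimension $d$ at $\genericpoint$, and any closed complex-analytic subset of the manifold $M$ that has dimension equal to $\dim M$ at $\genericpoint$ must coincide with $M$ on a smaller neighborhood (its complement would be a proper analytic subset of $M$, hence of strictly smaller dimension). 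Hence $V = M$ near $\genericpoint$, and $\genericpoint$ is analytically smooth of dimension $d$.

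For the reverse direction (analytic $\Rightarrow$ algebraic), suppose $V$ coincides, near $\genericpoint$, with a complex submanifold $M\subset\CC^N$ of dimension $D$. I need to establish both $\Dim_\genericpoint(V)=D$ and that the Zariski tangent space has dimension $D$. The dimension equality again follows from the analytic–algebraic dimension comparison. For the tangent space, the easy inclusion says the holomorphic tangent $T_\genericpoint M$ sits inside the Zariski tangent space: any polynomial $g\in I(V)$ vanishes identically on $M$ near $\genericpoint$, so $dg(\genericpoint)$ annihilates $T_\genericpoint M$. The opposite inclusion is the subtle point. Locally $M$ is cut out by $N-D$ holomorphic functions with linearly independent differentials at $\genericpoint$; using the faithfully flat map $\mathcal{O}_{V,\genericpoint}\to \mathcal{O}^{\mathrm{an}}_{V,\genericpoint}$, which induces an isomorphism on cotangent spaces because the two local rings share a completion, I would produce $N-D$ polynomials in $I(V)$ whose differentials at $\genericpoint$ are linearly independent. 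This forces the Zariski tangent space to have dimension exactly $D$, giving algebraic smoothness with the correct local dimension.

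The main obstacle is not geometric but foundational: both directions rely on the standard analytic-algebraic comparison for complex local rings, namely that $\mathcal{O}_{V,\genericpoint}$ and $\mathcal{O}^{\mathrm{an}}_{V,\genericpoint}$ have equal Krull dimension and that the natural inclusion is faithfully flat and induces an isomorphism on $\mathfrak{m}/\mathfrak{m}^2$ (equivalently, the two rings share a common completion). I would invoke these as black-box results from standard references such as Shafarevich's \emph{Basic Algebraic Geometry}~II, since re-deriving them is well outside the scope of the appendix. With those facts in hand, the rest is just the complex implicit function theorem plus elementary bookkeeping about dimensions and gradients, as sketched above.
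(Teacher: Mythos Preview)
The paper does not actually prove Theorem~\ref{thm:smpt}: it is stated in the appendix as a standard background fact, with references to Harris, Milnor, and a third source for the non-irreducible case, and no argument is given. So there is no ``paper's own proof'' to compare against.

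Your sketch is correct and follows the standard route one finds in those references: holomorphic implicit function theorem plus a dimension argument for the forward direction, and the comparison $\widehat{\mathcal{O}}_{V,\genericpoint}\cong\widehat{\mathcal{O}}^{\mathrm{an}}_{V,\genericpoint}$ (hence isomorphic $\mathfrak{m}/\mathfrak{m}^2$) for the reverse. One small streamlining: once you know the two local rings share a completion, the equality of Zariski and analytic tangent spaces is immediate, since both are $(\mathfrak{m}/\mathfrak{m}^2)^*$ and completion preserves $\mathfrak{m}/\mathfrak{m}^2$; you do not need to separately produce $N-D$ polynomials with independent differentials. Your honest flagging of the black-box comparison results is appropriate, and matches the paper's own stance of citing rather than proving them.
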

For discussions on this theorem see~\cite[Exercise 14.1]{harris},
~\cite[Page 13]{milnor}. See~\cite[Page 14]{sing} for the setting
where one does not assume irreducibility, or even pure dimension.

Note that the second direction does not have a corresponding statement in the setting of real algebraic varieties.

\section{Fano Varieties of $\LL$}
\label{sec:fano}

This section contains a bonus result about 
the linear subsets in 
$L_{2,4}$. Though it is not needed for the rest of
the paper, it can be of use for unlabeled rigidity
problems~\cite{loops0}.

\begin{definition}
\label{def:fano}
Given an affine algebraic cone $V \subset \CC^{N}$ (an affine variety defined by a 
homogeneous ideal), its \defn{Fano-$k$} variety 
$\Fano_k(V)$
is the subset
of the Grassmanian $\Gr(k+1,N)$ corresponding to $k+1$-dimensional linear
subspaces that are contained in $V$. 
\end{definition}

\begin{theorem}
\label{thm:3flats}
The only $3$-dimensional linear subspaces that are contained in $\LL$ are the $60$ $3$-dimensional linear spaces comprising its singular locus.
Moreover, there are no linear subspaces of dimension
$\ge 4$ contained in $\LL$.
\end{theorem}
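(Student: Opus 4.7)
The plan is to reduce the classification of linear subspaces of $\LL$ to a Fano-scheme computation on the Grassmannian $\Gr(k,6)$ and then verify the outcome computationally in Magma, in the same spirit as the singular-locus analysis elsewhere in this section.

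First I would dispose of the case in which a linear subspace $L \subseteq \LL$ is contained in $\sing(\LL)$. Since $L$ is irreducible and $\sing(\LL)$ is a union of $60$ three-dimensional linear subspaces, Lemma~\ref{lem:comp} forces $L$ to be contained in one of them; together with $\dim L \geq 3$ this means $L$ is itself one of the $60$ listed subspaces. So the real work is for $L$ not contained in $\sing(\LL)$, in which case $L$ must contain a smooth point $p$ of $\LL$ and hence lies in the hyperplane $T_p(\LL) \subset \CC^6$. In particular, the problem reduces to finding linear subspaces of $\CC^6$ on which the degree-$6$ Cayley--Menger polynomial $f$ defining $\LL$ vanishes identically.

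Next I would parameterize $\Gr(k,6)$ by its $\binom{6}{k}$ Schubert affine charts: in a given chart a $k$-plane is the column span of a $6\times k$ matrix $M$ whose top $k\times k$ block is the identity. The condition $f(M\t)\equiv 0$ in $\t=(t_1,\ldots,t_k)$ expands to $\binom{k+5}{5}$ polynomial equations in the $k(6-k)$ free entries of $M$ --- so $28$ equations in $9$ variables for $k=3$ and $84$ equations in $8$ variables for $k=4$. Magma can solve these systems (by Gr\"obner bases and primary decomposition) and take the union over the charts. One then verifies that for $k=3$ the full solution set is exactly the $60$ known singular 3-planes, and for $k=4$ the solution set is empty; the latter also rules out linear subspaces of dimension $\geq 4$, since any such would contain a $4$-dimensional subspace.

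The main obstacle I expect is managing the size of the polynomial systems, especially for $k=4$. This is where I would lean on the group $\PP\Aut(\LL)$ of order $11520$ constructed above, which acts on $\Gr(k,6)$ permuting the Schubert charts and permuting the $k$-planes contained in $\LL$. Working with orbit representatives should cut the number of chart computations down substantially. The $60$ singular 3-planes provide a useful collection of known solutions (spread across three orbits of sizes $32$, $24$, $4$) for sanity checks; any additional solution would have to form a further orbit under the same group action, which is straightforward to exclude once the representative systems have been solved.
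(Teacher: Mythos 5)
Your core plan---parameterize each Schubert chart of the Grassmannian by a $6\times k$ matrix with identity block, expand the defining Cayley--Menger polynomial in the parameters $\t$, collect coefficients, and solve the resulting system in Magma---is exactly the paper's method, and the concrete counts ($9$ free entries and $28$ coefficient equations for $k=3$) match. Two deviations are worth flagging.

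First, your plan for cutting down the chart computations via the full group $\PP\Aut(L_{2,4})$ of order $11520$ is flawed: that group does not permute the Schubert charts, because its generators include the Regge map, which is not a generalized permutation and hence does not map coordinate subspaces to coordinate subspaces. Only the signed-vertex-relabeling subgroup acts by monomial matrices, and indeed the paper uses just the $S_4$ vertex symmetry, which partitions the $\binom{6}{3}=20$ charts for $k=3$ into the three orbits you would expect (triangle, claw, and path-of-length-3 subgraphs of $K_4$), so only three representative charts need to be solved.

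Second, you do not need a separate and heavier $k=4$ computation. The paper computes $\Fano_2(\LL)$, observes that it is $0$-dimensional and coincides with $\Fano_2(\sing(\LL))$, and then concludes the nonexistence of subspaces of dimension $\ge 4$ immediately: a $4$-plane contained in $\LL$ would carry a whole $\Gr(3,4)$ of $3$-planes in $\LL$, giving a positive-dimensional family inside $\Fano_2(\LL)$ and contradicting discreteness. This shortcut saves you the $84$-equation system in $8$ unknowns and is worth adopting. Your opening observation that a $3$-plane meeting the smooth locus must lie in a tangent hyperplane is correct but unused in your actual computation; the Lemma~\ref{lem:comp} argument for the $L \subseteq \sing(\LL)$ case is a sound (if slightly different) way to package the comparison with the $60$ known subspaces that the paper states as a cardinality check $|\Fano_2(\LL)| = |\Fano_2(\sing(\LL))|$.
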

\begin{proof}
This proposition is proven by calculating  the Fano-$2$ variety of 
$\LL$ in the Magma CAS~\cite{magma}, and comparing it to the the Fano-$2$ variety of 
the singular locus of $\LL$. 

We use the approach described in~\cite[Page 70]{harris} to compute 
the $\Fano_2(\LL)$ variety. We summarize this approach here.
We shall order the coordinates of $\CC^6$ in the order
$( l_{12}, l_{13}, l_{23}, l_{14}, l_{24}, l_{34})$.

Let us specify a point in $\CC^6$ as 
\ba
\begin{pmatrix}
1&0&0 \\
0&1&0 \\
0&0&1 \\
\lambda_1&\lambda_2&\lambda_3\\ 
\lambda_4&\lambda_5&\lambda_6\\ 
\lambda_7&\lambda_8&\lambda_9 
\end{pmatrix}
\begin{pmatrix}
t_1\\
t_2\\
t_3
\end{pmatrix}
\ea
where the $\lambda_i$ are variables that specify a three-dimensional
linear subspace of $\CC^6$, and the $t_j$ are variables that
specify a point
on that subspace. Note that this can only represent
an affine open subset of the Grassmanian; it cannot represent three-dimensional
linear subspaces that are parallel to the first three coordinate axes.

We can compute the polynomial in $[\lambda_i, t_j]$
vanishing when the associated 
points in $\CC^6$ are also in $\LL$. We can then 
look at all of the coefficients
(polynomials in $\lambda_i$) of the monomials in $t_j$.
These coefficient polynomials vanish identically iff 
the linear subspace specified by the $\lambda_i$ is in $\LL$.
Thus these coefficients generate an affine open subset of
$\Fano_2(\LL)$.

To study the whole Fano variety, we must also 
look at the other affine subsets of the Grassmanian. Due to 
the vertex symmetry of $\LL$,
we only need to consider the additional two matrices:

\ba
\begin{pmatrix}
1&0&0 \\
0&1&0 \\
\lambda_1&\lambda_2&\lambda_3\\ 
0&0&1 \\
\lambda_4&\lambda_5&\lambda_6\\ 
\lambda_7&\lambda_8&\lambda_9 
\end{pmatrix}
\;\;{\rm and}\;\; 
\begin{pmatrix}
1&0&0 \\
\lambda_1&\lambda_2&\lambda_3\\ 
0&1&0 \\
\lambda_4&\lambda_5&\lambda_6\\ 
\lambda_7&\lambda_8&\lambda_9 \\
0&0&1 
\end{pmatrix}
\ea

These three matrices represent the triplet of coordinate axes corresponding to,
respectively, a triangle, a chicken-foot, and a simple open path. Thus, these $3$ open subsets of 
$\Fano_2(\LL)$, together with vertex relabeling, cover the full Fano variety.

We compute these $3$ open subsets of $\Fano_2(\LL)$ in Magma, and verify that, in each of these open subsets,
$\Fano_2(\LL)$ is $0$-dimensional and 
$|\Fano_2(\LL)| =|\Fano_2(\sing(\LL))|$. 
As  
$\Fano_2(\LL) \supset \Fano_2(\sing(\LL))$, 
we can conclude that  
$\Fano_2(\LL) =\Fano_2(\sing(\LL))$ (see supplemental script).

As Fano-$2$ variety is discrete, the higher Fano
varieties of $\LL$ must also be empty.
\end{proof}

\begin{remark}
\label{rem:fano3d}
We have been unable to fully
compute any of the
Fano varieties of $L_{3,5}$ in any computer algebra system, but partial
results do not look promising.
We have been able to verify that  
$\Fano_6(L_{3,5})$ 
is not empty (see supplemental script).
This together with our (partial) understanding
of $\sing(L_{3,5})$ suggests that 
$L_{3,5}$ indeed
contains $6$-dimensional linear spaces that are not 
contained in its singular locus. 
\end{remark}



\bibliographystyle{abbrvnat}
\bibliography{loops}

\end{document}